\newcommand\dd{\,\mathrm{d}}
\newcommand\ip[1]{\langle #1\rangle}
\newcommand\bigip[1]{\bigg\langle #1\bigg\rangle}
\newcommand\divv[1]{\mathrm{div}_{#1}}
\newcommand\nd[1]{\frac{\partial #1}{\partial \nu}}
\newcommand\ndy[1]{\frac{\partial #1}{\partial \nu_y}}
\newtheorem{theorem}{Theorem}[section]
\newtheorem{corollary}{Corollary}
\newtheorem{lemma}[theorem]{Lemma}
\newtheorem{assumption}{Assumption}
\theoremstyle{definition}
\newtheorem{definition}[theorem]{Definition}
\newtheorem{remark}{Remark}
\title{A Spectral Target Signature for Thin Surfaces with Higher Order Jump Conditions}
\author{Fioralba Cakoni\thanks{Department of Mathematics, Rutgers University, Piscataway, NJ 08854, USA(fc292@math.rutgers.edu)}, Heejin Lee\thanks{Department of Mathematics, Rutgers University, Piscataway, NJ 08854, USA(hl707@math.rutgers.edu)}, Peter Monk\thanks{Department of Mathematical Sciences, University of Delaware, Newark, DE 19711,  USA(monk@udel.edu)} \\and Yangwen Zhang\thanks{Department of Mathematical Sciences, Carnegie Mellon University, Pittsburgh, PA 15213, USA(yangwenz@andrew.cmu.edu)}}
\date{\empty}
\begin{document}
\maketitle


 \bigskip 
  \centerline{In Memory of Professor Victor Isakov}

\begin{abstract}
\noindent
In this paper we consider the inverse problem of determining structural  properties of a thin anisotropic and dissipative inhomogeneity in ${\mathbb R}^m$, $m=2,3$ from scattering data. In the asymptotic limit as the thickness goes to zero, the thin inhomogeneity is modeled by an open $m-1$ dimensional manifold (here referred to as screen), and the  field inside  is replaced by jump conditions on the total field involving  a second order surface differential operator.  We show that all the  surface coefficients (possibly matrix valued and complex) are uniquely determined from far field patterns of the scattered fields due to infinitely many incident plane waves at a fixed frequency. Then we introduce a target signature  characterized by a novel eigenvalue problem such that the eigenvalues can be determined from measured scattering data, adapting the approach in  \cite{Screens}. Changes in the measured eigenvalues are used to identified changes in the coefficients  without making use of the governing equations that model the healthy screen. In our investigation the shape of the screen  is known, since it represents the object being evaluated. We present  some preliminary numerical  results indicating the validity of our inversion approach.
\end{abstract}

\noindent{\bf Key words:} Inverse problems, Maxwell equations, transmission eigenvalues, scattering theory, distribution of eigenvalues.\\
\noindent{\bf AMS subject classifications:} 35Q61, 35P25, 35P20, 35R30, 78A46
\section{Introduction}

In this paper we are concerned with  nondestructive  evaluation of thin inhomogeneities via  probing with waves. In many contemporary engineering designs one encounters thin structures that are anisotropic, absorbing and dispersive. Inversion methods for fast monitoring of the integrity of such complex structures are highly desirable, and target signatures are suitable for this task. Target signatures are discrete quantities that can be computed from scattering data and used as indicators of changes in the constitutive material properties of the inhomogeneity. Let us first introduce the scattering problem we consider here.  
Let the bounded and connected piecewise smooth region $\mathcal{S} \in {\mathbb R}^{m}$, $m=2,3$ be the support of  a thin inhomogeneity with the constitutive material properties $A$ and $n$. We denote by ${\bf n}$ the unit outward normal vector defined almost everywhere on the boundary $\partial \mathcal{S}$.  Suppose that the incident field and the other fields in the problem are time harmonic, i.e. the time dependent incident field is of the form $\Re\left(u^i(x)e^{i\omega t}\right)$ where $\omega$ is the angular frequency, and where the complex valued spatially dependent function $u^i(x)$ is a solution of 
$$\Delta u^i+k^2 u^i=0\qquad \mbox{in}\;  {\mathbb R}^m.$$
Then the total field $u=u^s+u^i$ in ${\mathbb R}^m\setminus \overline{\mathcal{S}}$, where $u^s$ is the scattered field. If, in addition, $U$  denotes the total field in ${\mathcal S}$  then $u$ and $U$, respectively, satisfy
  \begin{eqnarray}
  \Delta u + k^2 u = 0 \qquad  &\text{\qquad  in \quad}& {\mathbb R}^m\setminus \overline{\mathcal{S}},\label{eq1}\\
  \nabla \cdot  A  \nabla U + k^2nU=0 &\text{\qquad  in \quad}& \mathcal{S}. \label{eq2}
  \end{eqnarray}
Here the wave number $k=\omega/c_{\rm{}ext}$ with $c_{\rm{}ext}$ denoting the wave speed of the homogeneous background. Across the interface the field on either side and their co-normal derivatives are continuous, i.e.  
\begin{equation}
u=U \qquad \mbox{and} \qquad  {\bf n}\cdot \nabla u={\bf n} \cdot A \nabla U \qquad   \mbox{ on } \; \partial \mathcal{S}. \label{eq3}\\
\end{equation}
 Of course the scattered field $u^s$ satisfies the Sommerfeld radiation condition  (see \cite{ColtonKress13})
\begin{equation}\label{SR-c}
\lim_{r\to \infty}{r^{\frac{m-1}{2}}}\left(\frac{\partial u^s}{\partial r}-iku^s\right) =0
\end{equation}
uniformly in $\hat x=x/|x|$, where $x\in{\mathbb R}^m$ and  $r=|x|$. In this paper we consider  plane waves as  incident fields which are  given by  $u^i:=e^{ikx\cdot d}$ where the unit vector $d$ is the incident direction. Instead of plane waves,  it is also possible to consider incident waves due to point sources located outside $\mathcal{S}$, in which case the obvious modifications need to be made in the formulation of the problem.

Now, we assume that $\mathcal{S}$ is cylindrical with maximum thickness  $2\delta>0$ that is bounded above and below by smooth bounded and connected  $m-1$ dimensional manifolds $\Gamma^+$ and $\Gamma^-$. Furthermore there is a  smooth surface $\Gamma$ with a  chosen unit normal  $\nu$ such that  $\Gamma^{\pm}:= {\pm}\delta f^{\pm}(s)\nu(s)$ with $s\in \Gamma$ and $0\leq f^{\pm}\leq 1 $ where $f^\pm$ are smooth profile functions defined on $\Gamma$ with boundary $\partial \Gamma$ (see Figure \ref{conf}).  
\begin{figure}[hh]
\begin{center}
\begin{tabular}{ccc}
\includegraphics[width=6cm,height=4cm]{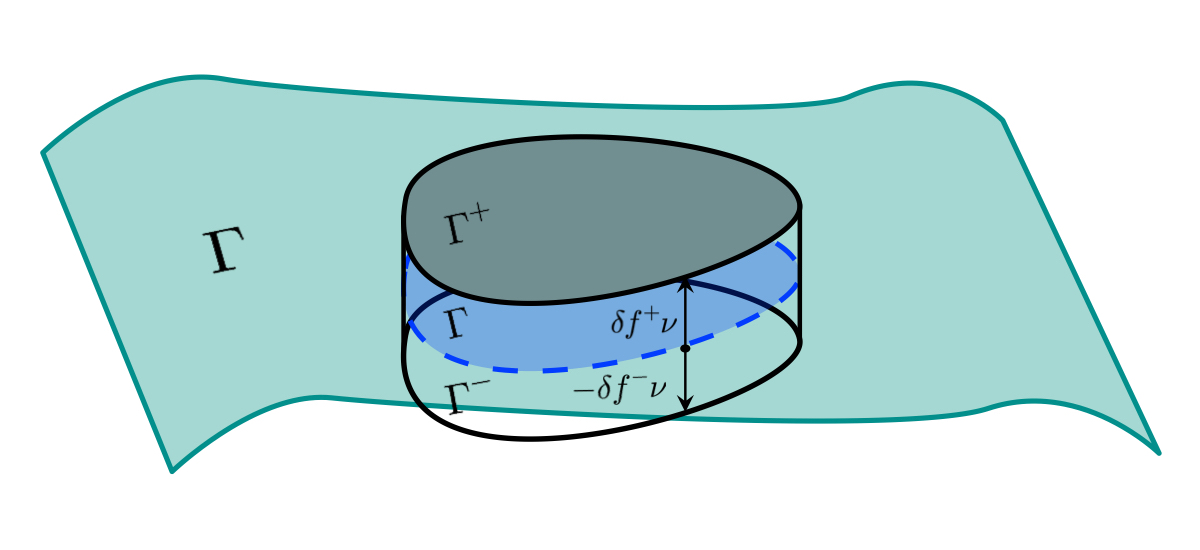} & 
\includegraphics[width=6cm,height=4cm]{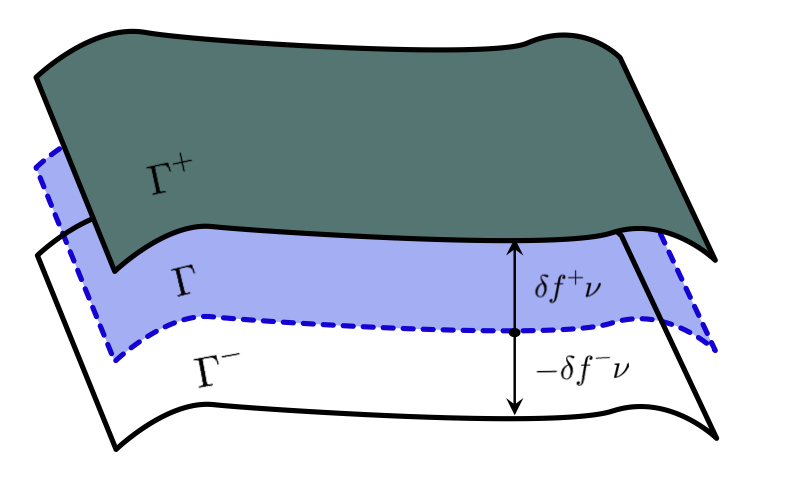} 
\end{tabular}
\caption{\small  }
\label{conf}
\end{center}
\end{figure}
The specific feature of our inhomogeneity is that the thickness $\delta$ is much smaller than the interrogating wavelength in  free space $\lambda=2\pi/k$. This introduces an essential computational difficulty in the numerical solution of the forward problem, and more importantly, for our purpose, in the inverse coefficient problem. 
Our goal is to design a sensitive target signature to detect changes in the coefficients $A$ and $n$ which involve the small scale of the thickness. Hence it is reasonable to use an asymptotic method for small $\delta$ and derive an approximate model where the inhomogeneity $\mathcal{S}$ is reduced to the open surface $\Gamma$. There is a vast literature in asymptotic methods for thin layers  \cite{CakoniTeresaHaddarMonk16}, \cite{electro}, \cite{houssem2} \cite{houssem1}, \cite{HM}, \cite{HM1}, \cite{asym},   where the different ways of performing the asymptotic analysis  lead to particular  types of jump conditions across $\Gamma$. For example  following the asymptotic   approach developed in \cite{irene-thesis} and used in \cite{CakoniTeresaHaddarMonk16} for a different inverse problems,  (\ref{eq2}) is replaced by the following  {\it approximate transmission conditions}
\begin{eqnarray}
 \left[ u \right] &=&  \delta(A^{-1}-1)(f^+ + f^-)  \left<  \frac{\partial u}{\partial \nu} \right>	\text{ on } \Gamma	\label{ac1}\\
 \left[ \frac{\partial u}{\partial \nu} \right] 	&=&   \left(- \nabla_\Gamma \cdot \delta (A-1)(f^+ + f^-)\nabla_\Gamma
  +  \delta k^2(1-n) \right) \left< u\right> \text{ on } \Gamma	\label{ac2}
 \end{eqnarray}
 where  $[u] :=  u^+ - u^-$ and $\ip{u}:= (u^+ + u^-)/2$ where $u^\pm(x) = \lim\limits_{h \rightarrow 0+} u(x\pm h\nu)$ and  for a matrix valued function $\partial u^\pm(x)/\partial\nu = \lim\limits_{h \rightarrow 0+} \nu \cdot \nabla u(x\pm h\nu)$ for $x \in \Gamma$, with  $\nu$   denoting a chosen normal direction to the oriented surface $\Gamma$. One can see that the coefficients in the jump conditions involve the constitutive material properties of the inhomogeneity as well as its thickness. If the inhomogeneity is a cylinder with constant thickness, then $f^+=f^-=1$ and in this case it is possible to consider a matrix valued coefficient $A$ that is  $\Gamma$-orthotropic independent of the normal direction $\nu$. For the convergence analysis of this type of  approximate models we refer the reader to   \cite{houssem2} \cite{houssem1}, \cite{HM}, \cite{HM1}, \cite{asym}. 
 
 \noindent
In this paper we  use a screen model of the above type with anisotropic coefficients on the surface, generalizing the model considered  \cite{Screens} to a more realistic situation. Constitutive material properties of the thin inhomogeneity are represented by a surface matrix  function, and two scalar surface functions. Our inverse problem is to determine information about  these coefficients from a knowledge of the far field pattern of the scattered fields due to infinitely many plane waves at fixed frequency, provide that $\Gamma$ is known. We refer the reader to \cite{crack3}, \cite{elena},  \cite{crack4}, \cite{crack2}, \cite{CC2003},  \cite{KR}, \cite{haddar}, \cite{ZC2009}, for various  reconstruction methods for  the shape of an open surface in inverse scattering. Our inversion method is based on a target signature characterized by a novel eigenvalue problem such that the eigenvalues can be determined from measured scattering data, adapting the approach in  \cite{Screens}. Changes in the measured eigenvalues are used to identify changes in the coefficients  without making use of the governing equations. 
\\
\noindent
Spectral target signatures originated with the Singularity Expansion Method based on  resonances (or scattering poles) \cite{baum_91}. Recently, transmission eigenvalues \cite{tran1}, \cite{CCH-b} (which are related to non-scattering frequencies \cite{vog}, \cite{mikko}) have been 
 successfully used as target signature for non-absorbing  inhomogeneities, since real  transmission eigenvalues that can be determined from multi-frequency scattering data exist only for real valued coefficients, (see e.g. \cite{tran2}, \cite{tra3}).  To deal with absorbing and dispersive media and develop a spectral target signature measurable from single frequency scattering data, a general framework was introduced in \cite{CakoniColtonMengMonk16}   to modify the far field operator whose injectivity leads to novel eigenvalue problems. This idea  was further  developed for inhomogeneities with nonempty interior in  \cite{ACH2017}, \cite{IEEE}, \cite{mod2}, \cite{C2020}, \cite{mod3}, \cite{mod1}. Transmission eigenvalues are used   in \cite{crackH} to characterize the density of  small cracks, whereas new eigenvalues  are derived in  \cite{Screens} and \cite{C2018} as target signature for open surfaces based on an appropriate modification of the far field operator. 

\noindent
In the next section we formulate precisely the  inverse problem, and prove a uniqueness result. In Section 3 we  introduce an appropriate modification of the far field operator leading to a new eigenvalue problem that serve as target signature for the screen. Section 4 is dedicated to the analysis of this eigenvalue problem connecting  eigenvalues to the unknown coefficients, whereas is  Section 5 we show how the eigenvalues are determined from scattering data at a fixed frequency. The last section presents some preliminary numerical experiments.  

\noindent
We finally note that the  fundamental ideas of  uniqueness proof and the employment of point sources in the linear sampling method  relate to the celebrated work by Victor Isakov on  inverse coefficients  problem for hyperbolic  partial differential equations. For his contributions in this area we refer the reader to  the monograph \cite{isakov}  which has become a classic in the theory of inverse problem.

\section{Formulation of the Inverse Problem}\label{forIP}
We start by formulating rigorously  our scattering problem. Let $\Gamma\subset {\mathbb R}^m$, $m=2,3$ be an $m-1$ dimensional smooth compact open manifold with boundary. We further assume that $\Gamma$ is simply connected and non self-intersecting such that  it can be embedded as part of a piece-wise smooth closed boundary $\partial D$ circumscribing a bounded connected region $D\subset {\mathbb R}^m$. This determines two sides of $\Gamma$ and  we choose the positive side determined by the unit normal vector $\nu$ on $\Gamma$ that coincides with the normal direction outward to $D$. The scattering problem is: given $u^i$ find the total field $u^s+u^i$ such that 
\begin{align}\label{totalfield}
\left\{\begin{aligned}\quad
&\Delta u + k^2 u = 0 \quad  \text{in } \mathbb{R}^m \setminus \overline{\Gamma}, \\
 &[u] = \alpha \bigip{\frac{\partial u}{\partial \nu}} \quad  \text{on } \Gamma, \\
& \bigg[ \frac{\partial u}{\partial \nu} \bigg] 
= \Big( - \nabla_\Gamma \cdot \mu \nabla_\Gamma + k^2 \beta \Big) \ip{u} \quad  \text{on }\Gamma,\\
& u=0  \quad  \text{on }\partial \Gamma
     \end{aligned}\right. 
\end{align}
 and $u^s$ satisfies the Sommerfeld radiation condition (\ref{SR-c}). In particular here we consider time harmonic incident plane waves given by  $u^i(x)=e^{ikx\cdot d}$ where the unit vector $d$ denotes the direction of propagation.

\noindent
 For simplicity of presentation, we assume that both $\alpha$ and $\mu$ are real valued functions,  whereas $\beta$ is allowed to be a complex valued function representing absorption. Our discussion can be carried through with obvious modifications if both or either one of the coefficients $\alpha$ and $\mu$ have nonzero imaginary part. In the 2-dimensional case the jump  conditions on $\Gamma$ simply become
$$[u] = \alpha(s) \bigip{\frac{\partial u}{\partial \nu}} \qquad \mbox{and} \qquad  \bigg[ \frac{\partial u}{\partial \nu} \bigg] ={\left(-\frac{\partial }{\partial s}\mu(s) \frac{\partial \left<u\right>}{\partial s}+ k^2\beta(s) \left<u\right>\right)} \qquad \mbox{on}\; \Gamma $$ 
 where $s$ denotes the arc-length  variable on $\Gamma$. In this case all coefficients $\alpha$, $\mu$ and $\beta$ are scalar functions. In the case of ${\mathbb R}^3$ we allow for $\mu$ to be a $2\times 2$ matrix-valued functions defined on $\Gamma$ describing anisotropic thin homogeneities  and $\divv{\Gamma} (\mu \nabla_{\Gamma}u)$ is the scalar anisotropic Laplace-Beltrami operator defined on $\Gamma$. More specifically the tensor  coefficient $\mu$ is function of the inhomogeneities anisotropic  physical  parameters and the geometry of the surface $\Gamma$. Let us precisely define the tensor $\mu$. Obviously, $\mu$ maps  a vector tangential to $\Gamma$ at a point $x\in\Gamma$ to a vector tangential to $\Gamma$ at the same point $x\in\Gamma$. To be more precise, let $\nu(x)$  be the smooth outward  unit normal vector function to $D$ and let  $\hat t_1(x)$ and $\hat t_2(x)$ be two perpendicular vectors on the tangential plane to $\Gamma$ at the point $x$ such that  $\hat t_1, \hat t_2, \nu$ form a right hand coordinative system with origin at  $x$. Then  the matrix $\mu(x)$ is given by the following dyadic expression 
\begin{equation}\label{imp-exp}
\mu(x)=\left(\mu_{11}(x)\hat t_1(x)+\mu_{12}(x)\hat t_2(x)\right)\hat t_1(x)+\left(\mu_{21}(x)\hat t_1(x)+\mu_{22}(x)\hat t_2(x)\right)\hat t_2(x).
\end{equation}
Note that,  if $\xi(x)=a\hat t_1(x)+b \hat t_2(x)$ for some  $a,b\in{\mathbb C}$,  then  $\mu(x)\cdot \xi(x)$ is the tangential vector given by
 $$\mu(x)\cdot \xi(x)=(a \mu_{11}(x)+b\mu_{21}(x))\hat t_1(x)+(a \mu_{12}(x)+b\mu_{22}(x))\hat t_2(x).$$
 From physical considerations we assume that $\mu_{12}(x)=\mu_{21}(x)$ for all $x\in \Gamma$, so that in the case of ${\mathbb R}^3$ we assume that  $\mu$  is a symmetric $2\times 2$ tensor with  entries $\mu_{ij}\in L^\infty(\Gamma)$. The basic assumption throughout the paper is that $\mu$ is uniformly positive definite, i.e.
 \begin{equation}\label{pos}
\mu(x)\geq \mu_0 \; \; \mbox{in } {\mathbb R}^2\qquad \mbox{or} \qquad \overline{\xi(x)}^\top \mu(x)\cdot \xi(x)\geq \mu_0 |\xi(x)|^2 \; \;\mbox{in } {\mathbb R}^3
 \end{equation}
  where $\mu_0>0$ is a positive constant (independent of $x$), and (\ref{pos}) holds for almost every point $x\in \Gamma$ and every vector $\xi\in {\mathbb R}^3$ tangential to $\Gamma$ at $x$. The coefficient $\alpha$ is a $L^\infty(\Gamma)$ real valued function such that $\alpha^{-1} \in L^\infty(\Gamma)$. The coefficient $\beta$ is a complex valued function in $L^{\infty}(\Gamma)$ of the form $\beta(x)=\beta_r+\frac{i}{k}\beta_i$, such that $\Im(\beta)=\beta_i/k\leq 0$ which models absorbing and dispersive properties of the inhomogeneity, 

\noindent
To establish the well-posedeness of the scattering problem (\ref{totalfield}) we define the spaces  
$$V(\mathbb{R}^m\setminus \overline{\Gamma}) := \{ u \in H^1_{loc}(\mathbb{R}^m\setminus \overline{\Gamma}) : \nabla_\Gamma u^\pm \in L^2(\Gamma)\},$$ 
$$V_0(\mathbb{R}^m\setminus \overline{\Gamma}) := \{ u \in H^1_{loc}(\mathbb{R}^m\setminus \overline{\Gamma}) : \nabla_\Gamma u^\pm \in L^2(\Gamma) \: \text{and} \: u|_{\partial \Gamma} = 0\}{.}$$ 
\noindent
Then in \cite{H-thesis} it is shown that there exist a unique solution $u\in V_0(\mathbb{R}^m\setminus \overline{\Gamma})$ of (\ref{totalfield}) which depends continuously on $u^i$ with respect to the norm
\[
\|u \|^2_{V}:= \| u \|^2_{H^1(B_R\setminus \overline{D})} + \| u \|^2_{H^1(D)}+\| \ip{u} \|^2_{H^1(\Gamma)},
\]
for every ball $B_R$ of radius $R>0$ large enough.  Note that the direct scattering problem is a particular case of this problem: Find $w\in V(\mathbb{R}^m\setminus \overline{\Gamma}) $ such that $w+u^i\in V_0(\mathbb{R}^m\setminus \overline{\Gamma})$ such that 
\begin{align}\label{w-field}
\left\{\begin{aligned}\quad
&\Delta w + k^2 w = 0 \quad  \text{in } \mathbb{R}^m \setminus \overline{\Gamma}, \\
 &[w] = \alpha \bigip{\frac{\partial w}{\partial \nu}} +\alpha \varphi\quad  \text{on } \Gamma, \\
& \bigg[ \frac{\partial w}{\partial \nu} \bigg] 
= \Big( - \nabla_\Gamma \cdot \mu \nabla_\Gamma + k^2 \beta \Big) \ip{w}+\psi \quad  \text{on }\Gamma,\\
& \lim_{r\to \infty}{r^{\frac{m-1}{2}}}\left(\frac{\partial w}{\partial r}-ik w \right) =0
     \end{aligned}\right. 
\end{align}
where $\varphi = \ip{\partial u^i/ \partial \nu} |_\Gamma$ and $\psi = {\big( - \nabla_\Gamma \cdot \mu \nabla_\Gamma + k^2 \beta \big) \ip{u^i}|_\Gamma}$.  In general ${\varphi}$ and ${\psi}$ can be 
\begin{equation}\label{trace}\left\{\begin{array}{rrrclll}
& \varphi:=\displaystyle{\left<\frac{\partial v}{\partial \nu} \right>-\frac{1}{\alpha}\left[ v \right]} \hspace*{2cm}& \\
& &\\
& \psi:=\Big( - \nabla_\Gamma \cdot \mu \nabla_\Gamma + k^2 \beta \Big) \ip{v} -\displaystyle{\left[\frac{\partial v}{\partial \nu} \right]}&
\end{array}\right.
\end{equation}
for some $v\in V(\mathbb{R}^m\setminus \overline{\Gamma}) $ with $\Delta v \in L^2(B_R\setminus\Gamma)$ for all $R>0$. In fact in \cite{H-thesis} it is shown that  there is a unique solution $w\in V(\mathbb{R}^m\setminus \overline{\Gamma}) $, with $w-v\in V_0(\mathbb{R}^m\setminus \overline{\Gamma}) $ of (\ref{w-field}). For later use we define the following trace space on $\Gamma$ of functions $u\in V_0(\mathbb{R}^m\setminus \overline{\Gamma}) $ by
\begin{equation}\label{tracesp}
V_0\left(\Gamma \right) :=\left\{u\in H^{1/2}(\Gamma) \;\; \mbox{such that}\;\; \nabla_{\Gamma} u\in L^2(\Gamma) \: \text{and} \: u|_{\partial \Gamma} = 0\right\} 
\end{equation} 
and its dual $V^{-1}\left(\Gamma\right)$ with respect to the following duality pairing
\begin{equation}\label{trace-dual}
\left(u,v\right)_{V_0\left(\Gamma\right),V^{-1}\left(\Gamma\right)}:=\left(u,v\right)_{H^{1/2}(\Gamma),\tilde H^{-1/2}(\Gamma)}+\left(\nabla_\Gamma u, \nabla_{\Gamma} v\right)_{L^2(\Gamma), L^2(\Gamma)}.
\end{equation}
We define $\tilde{H}^{1/2}(\Gamma)$ and $\tilde{H}^{-1/2}(\Gamma)$ consist of functions in ${H}^{1/2}(\Gamma)$ and ${H}^{-1/2}(\Gamma)$ that can be extended by zero to the entire boundary $\partial D$ as $H^{1/2}$ and $H^{-1/2}$ functions, respectively. They are duals of $H^{-1/2}\left(\Gamma\right)$ and $H^{1/2}\left(\Gamma\right)$, respectively. Note that  for $v\in V(\mathbb{R}^m\setminus \overline{\Gamma})$, such that $\Delta v\in L^2({\mathbb R}^m)$ we have that $[v]\in\tilde H^{1/2}(\Gamma)$ and $[\partial v/\partial \nu]\in \tilde H^{-1/2}(\Gamma)$.  Hence $\varphi\in H^{-1/2}(\Gamma)$ and $\psi\in {V^{-1}(\Gamma)}$.

\noindent
For later use we also define
\begin{equation}\label{VD}
V(D)  := \{ u \in H^1(D) :  \nabla_\Gamma u \in L^2(\Gamma)\},
\end{equation}
\begin{equation}\label{VD0}
V_0(D)  := \{ u \in H^1(D) :  \nabla_\Gamma u \in L^2(\Gamma)  \; \mbox{and} \;  u|_{\partial \Gamma} = 0\},
\end{equation}
equipped with the graph norm, and similarly
\begin{equation}\label{VED}
V({\mathbb R}^m\setminus D)  := \{ u \in H_{loc}^1({\mathbb R}^m\setminus D) :  \nabla_\Gamma u \in L^2(\Gamma)\},
\end{equation}
\begin{equation}\label{VED0}
V_0({\mathbb R}^m\setminus D)  := \{ u \in H_{loc}^1({\mathbb R}^m\setminus D) :  \nabla_\Gamma u \in L^2(\Gamma)\; \mbox{and} \;  u|_{\partial \Gamma} = 0\}.
\end{equation}

\bigskip
\noindent
 It is known, thanks to the radiation condition  (\ref{SR-c}), that the scattered field $u^s(x, d)$ due to the plane wave incident field  $u^i(x,d)=e^{ikx\cdot d}$ assumes the asymptotic behavior \cite{ColtonKress13}
\begin{align}\label{asymp}
u^s(x,d) = \frac{e^{ikr}}{r^{\frac{m-1}{2}}}u_\infty(\hat{x}, d) + O(r^{-\frac{m+1}{2}})\qquad \mbox{as} \;\; r=|x|\to \infty
\end{align}
uniformly in all directions $\hat{x}=x/|x|$. The function $u_\infty(\hat x,d)$ defined on the unit sphere $\mathbb{S}^{m-1}:=\left\{x\in {\mathbb R}^m, \; |x|=1\right\}$ is called the $\mathit{far}$-$\mathit{field}$ $\mathit{pattern}$ of the scattered wave.  The far field patter for various incident field are the data we use  solve the inverse scattering problem.
\medskip

\noindent
The (measured) {\it scattering data} is $u_\infty(\hat x,d)$ for all observation directions $\hat x \in \mathbb{S}^{m-1}$ and  all incident directions $d\in \mathbb{S}^{m-1}$. The {\it inverse problem} of  interest to us is:  from the scattering data determine  information about the boundary coefficients $\alpha$, $\mu$ and $\beta$, provided $\Gamma$ is known.

\begin{remark}\label{rem1}  It is reasonable to replace the last Dirichlet condition on $\partial \Gamma$ in (\ref{totalfield})  with a Neumann type condition, i.e.  the normal derivative on $\partial \Gamma$ tangential to $\partial D$ vanishes (see e.g. \cite{electro}). Furthermore, everything  in this paper holds true if the scattering data is given for incident directions $d\in \mathbb{S}^{m-1}_1\subset \mathbb{S}^{m-1}$ and observation direction $\hat x\in \mathbb{S}^{m-1}_2\subset \mathbb{S}^{m-1}$, where $\mathbb{S}^{m-1}_1$ and $\mathbb{S}^{m-1}_2$ are two open subsets (possibly the same)  of the unit sphere $\mathbb{S}^{m-1}$
\end{remark}
\noindent
For later use we define here the fundamental solution of the Helmholtz equation 
\begin{align}\label{fsol}
\Phi(x,z) =
\begin{cases}
 \frac{i}{4} H^{(1)}_0(k|x-z|) \quad \text{ in } {\mathbb R}^2, \\[10pt]
 \displaystyle{\frac{e^{ik|x-z|}}{4\pi |x-z|}} \quad \text{ in } {\mathbb R}^3.
 \end{cases}
\end{align}
where $H_0^{(1)}$ is the Hankel function of the first kind and of order $0$. Note that $\Phi(x,z)$ is an outgoing field i.e. satisfies the Sommerfeld radiation condition.
\subsection{Unique determination of the boundary coefficients}
We show that the scattering data uniquely determine all the coefficients.  In the following discussion we assume that the screen and the coefficients  satisfy the assumptions at the beginning of this section.

\noindent
To prove our uniqueness theorem, we need a density result stated in the  following lemma. 
\begin{lemma}\label{densitylemma}
Let $u(\cdot, d)$ be the solution of (\ref{totalfield}) with $u^i(x) = e^{ikd\cdot x}$ and let $u^s(\cdot, d)$ be the corresponding scattered field. If  for $\varphi \in H^{-1/2}(\Gamma)$ and $\psi\in V^{-1}(\Gamma)$ such that
\begin{align*}
\left(\left[u(\cdot, d)\right], \varphi\right)_{\tilde H^{1/2}(\Gamma), \tilde H^{-1/2}(\Gamma)} +\left(\left<u(\cdot, d)\right>, \psi\right)_{V_0(\Gamma), V^{-1}(\Gamma)}= 0, \quad \forall d \in \mathbb{S}^{m-1},
\end{align*}
then $\varphi = 0$ and $\psi=0$. 
\end{lemma}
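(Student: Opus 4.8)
The plan is to recognize the left-hand side of the hypothesis, for each fixed $d$, as a boundary pairing that equals the coupling of $u(\cdot,d)$ with an auxiliary radiating field carrying the data $(\varphi,\psi)$, and then to transfer the vanishing of this coupling to the far field of that auxiliary field, where completeness of plane-wave solutions and unique continuation finish the argument.

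First I would introduce the radiating \emph{adjoint field} $w\in V_0(\mathbb{R}^m\setminus\overline\Gamma)$, namely the unique solution (with homogeneous Dirichlet condition on $\partial\Gamma$) of the transmission problem \eqref{w-field} in which the roles of the data are played by $\varphi$ and $-\psi$; its existence, continuous dependence, and the memberships $[w]\in\tilde H^{1/2}(\Gamma)$, $[\partial w/\partial\nu]\in\tilde H^{-1/2}(\Gamma)$ follow from the well-posedness results recalled above (cf.\ \cite{H-thesis}), once one observes that every pair $(\varphi,\psi)\in H^{-1/2}(\Gamma)\times V^{-1}(\Gamma)$ is admissible as data for \eqref{w-field}. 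The sign on $\psi$ is a bookkeeping device chosen so that the surviving surface pairing matches the hypothesis exactly.

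Next, for fixed $d\in\mathbb{S}^{m-1}$ write $u(\cdot,d)=e^{ik\,\cdot\,d}+u^s(\cdot,d)$ and apply Green's second identity to $w$ and $u(\cdot,d)$ on $B_R\setminus\overline\Gamma$ for $R$ large. Since $\Gamma\subset\partial D$ and both fields are smooth across $\partial D\setminus\overline\Gamma$, it is cleanest to split the region into $D$ and $B_R\setminus\overline D$, apply the identity (in its $\Delta\in L^2$ / duality form) on each piece, and note that the contributions on $\partial D\setminus\overline\Gamma$ cancel. The Helmholtz equation kills the volume term, leaving
\[
\int_{\partial B_R}\Big(w\,\frac{\partial u(\cdot,d)}{\partial r}-u(\cdot,d)\,\frac{\partial w}{\partial r}\Big)\dd s
=\int_{\Gamma}\Big[\,w\,\frac{\partial u(\cdot,d)}{\partial\nu}-u(\cdot,d)\,\frac{\partial w}{\partial\nu}\,\Big].
\]
Expanding the jump of each product via $[ab]=[a]\ip{b}+\ip{a}[b]$ and substituting the transmission conditions for $u(\cdot,d)$ (from \eqref{totalfield}) and for $w$ (from the adjoint problem), the $\ip{\partial_\nu w}\ip{\partial_\nu u}$ terms cancel, the $k^2\beta$ terms cancel, and the two Laplace--Beltrami terms cancel after integration by parts on $\Gamma$ — this is where $\ip{u(\cdot,d)},\ip{w}\in V_0(\Gamma)$ and the symmetry $\mu_{12}=\mu_{21}$ are used. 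What remains is precisely $([u(\cdot,d)],\varphi)_{\tilde H^{1/2}(\Gamma),\tilde H^{-1/2}(\Gamma)}+(\ip{u(\cdot,d)},\psi)_{V_0(\Gamma),V^{-1}(\Gamma)}$, which is zero by hypothesis, so the boundary integral over $\partial B_R$ vanishes for all large $R$. Letting $R\to\infty$, the $u^s(\cdot,d)$ part tends to $0$ because $w$ and $u^s(\cdot,d)$ are both radiating, while the plane-wave part converges, by the standard far-field (mixed reciprocity) identity (see \cite{ColtonKress13}), to $c_m\,w_\infty(-d)$ with $c_m\ne0$ a dimensional constant. Hence $w_\infty(-d)=0$ for every $d\in\mathbb{S}^{m-1}$, i.e.\ $w_\infty\equiv0$; by Rellich's lemma $w$ vanishes outside a large ball, and since $\Gamma$ is an open $(m-1)$-manifold the set $\mathbb{R}^m\setminus\overline\Gamma$ is connected, so unique continuation for the Helmholtz equation gives $w\equiv0$ in $\mathbb{R}^m\setminus\overline\Gamma$. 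Then $w^\pm=0$ and $\partial w^\pm/\partial\nu=0$ on $\Gamma$, and the adjoint transmission conditions force $\alpha\varphi=0$ and $\psi=0$; since $\alpha^{-1}\in L^\infty(\Gamma)$ this yields $\varphi=0$ and $\psi=0$.

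The main obstacle is the rigorous justification of the Green identity and of the collapse of the $\Gamma$-terms at the natural (fractional, dual) regularity: all boundary integrals must be read as the appropriate duality pairings on $\Gamma$, the splitting across $\partial D$ must be legitimized, and the integration by parts in the Laplace--Beltrami terms must be justified using membership in $V_0(\Gamma)$ and the symmetry of $\mu$. A secondary point to settle first is that every $(\varphi,\psi)\in H^{-1/2}(\Gamma)\times V^{-1}(\Gamma)$ arises as admissible data for \eqref{w-field}, so that the adjoint field $w$ exists; and if the brackets in the statement are taken to be sesquilinear rather than bilinear, one simply builds $w$ from $\overline\varphi,\overline\psi$ and concludes $\overline\varphi=\overline\psi=0$.
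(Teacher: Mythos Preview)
Your proof is correct and follows essentially the same route as the paper: both introduce the auxiliary radiating field $w$ solving \eqref{w-field} with the given data, use Green's identity together with the transmission conditions on $\Gamma$ to identify the hypothesis with (a nonzero multiple of) the far-field pattern $w_\infty$, and then finish via Rellich's lemma and unique continuation to force $w\equiv 0$ and hence $\varphi=\psi=0$. The only cosmetic difference is that the paper carries out the reciprocity computation on $\partial D$ and reads off $w_\infty$ from Green's representation, whereas you work on $\partial B_R$ and invoke the mixed-reciprocity identity; the two are equivalent, and your sign choice $(\varphi,-\psi)$ for the data of $w$ in fact matches the $+$ sign in the lemma's hypothesis more cleanly than the paper's own proof.
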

\begin{proof}
For any $d \in \mathbb{S}^{m-1}$, assume that 
\begin{equation}\label{zero}
\int_{\Gamma} \left[u(\cdot, d)\right] \varphi\dd S -\int_{\Gamma} \left<u(\cdot, d)\right> \psi \dd S = 0,
\end{equation}
where the integrals are interpreted in the sense of duality with $L^2(\Gamma)$ as the pivot space.  Let $w \in V$ be the solution of (\ref{w-field}) with boundary data this $\varphi$ and $\psi$.  After integrating by parts  and using the transmission conditions across $\Gamma$ for $u(\cdot, d)$ and $w$ along with the fact that both  $u(\cdot, d)$ and $w$ are zero on $\partial \Gamma$,  (\ref{zero})  becomes 
\begin{align*}
0 & = \int_{\Gamma} \left[u(\cdot, d)\right]\left(\frac{1}{\alpha}[w]-  \bigip{\frac{\partial w}{\partial \nu}}\right)\dd S\\
+&\int_{\Gamma} \left<u(\cdot, d)\right> \left(\Big( - \nabla_\Gamma \cdot \mu \nabla_\Gamma + k^2 \beta \Big) \ip{w} - \bigg[ \frac{\partial w}{\partial \nu} \bigg] \right) \dd S\\
=&\int_{\Gamma}\left(  \bigip{\frac{\partial u(\cdot, d)}{\partial \nu}}[w]- \left[u(\cdot, d)\right] \bigip{\frac{\partial w}{\partial \nu}} + \bigg[ \frac{\partial u(\cdot, d)}{\partial \nu} \bigg] \ip{w} - \left<u(\cdot, d)\right>\bigg[ \frac{\partial w}{\partial \nu} \bigg]\right)\dd S\\
{=}& \int_{\partial D} \left( \bigip{\frac{\partial u(\cdot, d)}{\partial \nu}}[w]- \left[u(\cdot, d)\right] \bigip{\frac{\partial w}{\partial \nu}} + \bigg[ \frac{\partial u(\cdot, d)}{\partial \nu} \bigg] \ip{w} - \left<u(\cdot, d)\right>\bigg[ \frac{\partial w}{\partial \nu} \bigg]\right)\dd S,
\end{align*}
where the last equality holds because the jumps of $u(\cdot, d)$ and $w$ and their normal derivatives are zero across $\partial D\setminus{\overline \Gamma}$. Here $D$ is a bounded connected region $D\subset {\mathbb R}^m$ such that  $\Gamma \subset \partial D$. Simplifying the latter leads to 
\begin{align*}
0 &=  \int_{\partial D} \left(\frac{\partial u^+(\cdot, d)}{\partial \nu} w^+ -  u^+(\cdot, d)\frac{\partial w^+}{\partial \nu} \right)\dd S -\int_{\partial D}\left( \frac{\partial u^-(\cdot, d)}{\partial \nu} w^- -  u^-(\cdot, d)\frac{\partial w^-}{\partial \nu} \right)\dd S\\
=&\int_{\partial D} \left(\frac{\partial u^+(\cdot, d)}{\partial \nu} w^+ -  u^+(\cdot, d)\frac{\partial w^+}{\partial \nu} \right)\dd S,
\end{align*}
where the second integral is zero by Green's second identity since both $u$ and $w$ satisfy the Helmholtz equation inside $D$.
Now using that outside $D$ we have $u=u^s+u^i$  and the fact that 
\[
\int_{\partial D} u^s\frac{\partial w^+}{\partial \nu} -  \frac{\partial u^s}{\partial \nu} w^+ \dd S = 0,
\]
for radiating solutions to the Helmholtz equation $u^s$ and $w$, we finally obtain that
\[
 \int_{\partial D} u^i\frac{\partial w^+}{\partial \nu} -  \frac{\partial u^i}{\partial \nu} w^+ \dd S = 0.
\]
Using that $u^i(x) = e^{-ikx \cdot \hat y}$, i.e. the plane wave in the direction $d:=-\hat y$  we have 
\[
 \int_{\partial D} \Phi_\infty(\cdot, \hat y) \frac{\partial {w^+}}{\partial \nu} -  \frac{\partial \Phi_\infty(\cdot, \hat y)}{\partial \nu} {w^+} \dd S = 0,
\]
where $\Phi_\infty(x, \hat y) = \gamma_n e^{-ikx \cdot \hat y}$ with $\gamma_2 = i/4$ and $\gamma_3 = 1/(4\pi)$ is the far field pattern of the fundamental solution $\Phi(y, x)$ as a function of $y$ located at $x$ given by (\ref{fsol}). Since by  Green's representation theorem
$$ w(y):=\int_{\partial D} \Phi(\cdot,y) \frac{\partial {w^+}}{\partial \nu} -  \frac{\partial \Phi(\cdot, y)}{\partial \nu} {w^+} \dd S, \qquad y\in {\mathbb R}^m\setminus{\overline D}$$
(where we have used the symmetry of the fundamental solution),  the above identity {implies that} the far-field pattern $w_\infty(\hat y)$ of $w$ is identically zero for all $\hat y \in \mathbb{S}^{m-1}$. From Rellich's lemma and unique continuation, $w = 0$ in $\mathbb{R}^m \setminus {\Gamma}$ as a function in $V$ and hence $\varphi=0$ and $\psi=0$ from the jump conditions.
\end{proof}

\begin{theorem}\label{coef}
Assume that  for a fixed wave number $k$, the far-field patterns $u_{1, \infty}(\hat{x}, d)$ and $u_{2, \infty}(\hat{x}, d)$ corresponding to  $(\Gamma, \alpha_1, \beta_1, \mu_1)$ and  $(\Gamma, \alpha_2, \beta_2, \mu_2)$  respectively, coincide for all $\hat{x}, d\in \mathbb{S}^{m-1}$, and in addition $\beta_j\in C(\Gamma)$ and $\mu_j\in C^1(\Gamma)$, $j=1,2$. Then $\alpha_1=\alpha_2$,  $\beta_1=\beta_2$ and $\mu_1=  \mu_2$. 
\end{theorem}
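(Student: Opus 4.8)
The plan is to deduce from the coincidence of far field patterns that the two total fields are literally the same function, and then to convert the ``for all incident directions $d$'' freedom into the vanishing of the coefficient differences via the density result in Lemma~\ref{densitylemma}. So, first, since $u_{1,\infty}(\hat x,d)=u_{2,\infty}(\hat x,d)$ for all $\hat x,d\in\mathbb{S}^{m-1}$, the difference $u_1^s(\cdot,d)-u_2^s(\cdot,d)=u_1(\cdot,d)-u_2(\cdot,d)$ is a radiating solution of the Helmholtz equation with zero far field pattern, so by Rellich's lemma it vanishes outside a large ball containing $\overline\Gamma$; since $\Gamma$ has nonempty boundary, $\mathbb{R}^m\setminus\overline\Gamma$ is connected and unique continuation gives $u_1(\cdot,d)=u_2(\cdot,d)=:u(\cdot,d)$ in all of $\mathbb{R}^m\setminus\overline\Gamma$. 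In particular the one-sided traces $u^\pm$ and $\partial u^\pm/\partial\nu$ agree for both problems, so the single function $u(\cdot,d)$ satisfies both sets of transmission conditions on $\Gamma$; subtracting them gives, for every $d$,
\[
(\alpha_1-\alpha_2)\bigip{\frac{\partial u(\cdot,d)}{\partial\nu}}=0 ,\qquad \Big(-\nabla_\Gamma\cdot\tilde\mu\,\nabla_\Gamma+k^2\tilde\beta\Big)\ip{u(\cdot,d)}=0 \quad\text{on }\Gamma,
\]
where $\tilde\mu:=\mu_1-\mu_2$ and $\tilde\beta:=\beta_1-\beta_2$.

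To recover $\alpha$, I use the first transmission condition for $(\alpha_1,\beta_1,\mu_1)$ to replace $\ip{\partial u/\partial\nu}$ by $\alpha_1^{-1}[u]$, so that the first identity becomes $g\,[u(\cdot,d)]=0$ on $\Gamma$ for all $d$, with $g:=(\alpha_1-\alpha_2)\alpha_1^{-1}\in L^\infty(\Gamma)$ real valued (recall $\alpha_1^{-1}\in L^\infty(\Gamma)$). For any $\chi\in L^2(\Gamma)$ put $\varphi:=g\chi\in L^2(\Gamma)\subset H^{-1/2}(\Gamma)$ and $\psi:=0$; then $\int_\Gamma[u(\cdot,d)]\varphi\dd S=\int_\Gamma g[u(\cdot,d)]\chi\dd S=0$ for all $d$, and Lemma~\ref{densitylemma} forces $g\chi=0$. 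Letting $\chi$ vary over $L^2(\Gamma)$ yields $g=0$ a.e., i.e. $\alpha_1=\alpha_2$.

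For $\mu$ and $\beta$, I introduce the bounded symmetric bilinear form $a(v,w):=\int_\Gamma\nabla_\Gamma v\cdot\tilde\mu\nabla_\Gamma w\dd S+k^2\int_\Gamma\tilde\beta vw\dd S$ on $V_0(\Gamma)$, associated with the operator $-\nabla_\Gamma\cdot\tilde\mu\nabla_\Gamma+k^2\tilde\beta:V_0(\Gamma)\to V^{-1}(\Gamma)$; the second identity above reads $a(\ip{u(\cdot,d)},w)=0$ for all $w\in V_0(\Gamma)$ and all $d$. Fixing $w$, the map $v\mapsto a(v,w)$ is an element $\psi_w\in V^{-1}(\Gamma)$ with $(\ip{u(\cdot,d)},\psi_w)_{V_0(\Gamma),V^{-1}(\Gamma)}=a(\ip{u(\cdot,d)},w)=0$ for all $d$, so Lemma~\ref{densitylemma} (with $\varphi=0$) gives $\psi_w=0$, hence $a(v,w)=0$ for all $v,w\in V_0(\Gamma)$. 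Restricting to $v,w$ smooth and compactly supported in the interior of $\Gamma$ and integrating by parts (using $\tilde\mu\in C^1$) gives $-\nabla_\Gamma\cdot(\tilde\mu\nabla_\Gamma v)+k^2\tilde\beta v=0$ pointwise for all such $v$; evaluating at an arbitrary interior point $x_0$ with $v$ chosen so that $v(x_0)$, $\nabla_\Gamma v(x_0)$ and $\nabla_\Gamma^2 v(x_0)$ are prescribed independently, the Hessian term forces the symmetric matrix $\tilde\mu(x_0)=0$, whereupon $\tilde\mu\equiv 0$ and the remaining zeroth-order term forces $\tilde\beta(x_0)=0$. By continuity $\mu_1\equiv\mu_2$ and $\beta_1\equiv\beta_2$ on $\Gamma$.

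The difficulty here is not conceptual --- once $u_1=u_2$ is in hand, everything funnels through Lemma~\ref{densitylemma} --- but lies in the bookkeeping: one must check that the data fed into the density lemma genuinely lie in the right spaces ($\varphi\in H^{-1/2}(\Gamma)$, $\psi_w\in V^{-1}(\Gamma)$, $[u(\cdot,d)]\in\tilde H^{1/2}(\Gamma)$, $\ip{u(\cdot,d)}\in V_0(\Gamma)$) and that the abstract duality pairings collapse to the concrete surface integrals used above, and the final localization must be carried out with the surface differential operators on the curved manifold $\Gamma$, verifying that a second order Taylor polynomial times a cutoff does realize arbitrary prescribed $v(x_0)$, $\nabla_\Gamma v(x_0)$ and $\nabla_\Gamma^2 v(x_0)$, so that the pointwise identity genuinely decouples into the three coefficient equations.
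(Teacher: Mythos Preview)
Your argument is correct and follows the paper's approach closely: Rellich's lemma to identify the total fields, subtraction of the transmission conditions, and then Lemma~\ref{densitylemma} to pass from ``for all $d$'' to identities that force the coefficient differences to vanish. The only substantive difference is in the final localization for $\mu$ and $\beta$: the paper first isolates $\tilde\beta$ by choosing $\phi$ equal to $1$ near a point (so the gradient terms drop out), and then, with $\tilde\beta=0$ in hand, uses the quadratic form $\int_\Gamma\nabla_\Gamma\phi\cdot\tilde\mu\,\nabla_\Gamma\phi\dd S=0$ together with a sign/contradiction argument to get $\tilde\mu=0$; you instead prescribe the $2$-jet of a test function at a point and read off $\tilde\mu(x_0)$ from the Hessian term and $\tilde\beta(x_0)$ from the zeroth-order term. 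Both routes work under the stated regularity $\mu_j\in C^1$, $\beta_j\in C$; yours is a bit more direct and avoids the definiteness discussion, at the cost of the manifold-calculus check you flag in your last paragraph.
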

\begin{proof} Since the far field patters coincide, from Rellich's lemma  we have that the total fields $u(\cdot, d) := u_1(\cdot, d) = u_2(\cdot, d)$ in $\mathbb{R}^m \setminus \overline{\Gamma}$. From the jump conditions of  $u_1(\cdot,d)$ and $u_2(\cdot, d)$ across $\Gamma$ we have that 
\begin{equation}\label{eqn1}
 \left(\frac{1}{\alpha_1}-\frac{1}{\alpha_2}\right) [u] = 0 \quad \text{ on } \Gamma,
\end{equation}
\begin{equation}\label{eqn2}
\Big(- \nabla_\Gamma \cdot (\mu_1-\mu_2) \nabla_\Gamma + k^2 (\beta_1-\beta_2) \Big) \ip{u}  = 0 \quad \text{ on } \Gamma.
\end{equation}
Note that $u\in V({\mathbb R}^m\setminus \overline{\Gamma})$. Thus for any test functions $\psi \in H^{-1/2}(\Gamma)$ and $\phi \in {V_0(\Gamma)}$ after integrating by parts on the manifold $\Gamma$  we have
\begin{eqnarray*}
0&=&\left([u(\cdot, d)], \left(\frac{1}{\alpha_1}-\frac{1}{\alpha_2}\right) \psi\right)_{\tilde H^{1/2}(\Gamma), H^{-1/2}(\Gamma)}\\
&+&\left(\left<u(\cdot, d)\right>, -{\nabla_{\Gamma} \cdot}(\mu_1-\mu_2) \nabla_{\Gamma} \phi + {k^2}(\beta_1 - \beta_2)\phi\right)_{{V_0(\Gamma)}, V^{-1}(\Gamma)}
\end{eqnarray*}
\noindent
Applying Lemma \ref{densitylemma} we conclude that  
\begin{equation}\label{h0}
\left(\frac{1}{\alpha_1}-\frac{1}{\alpha_2}\right) \psi=0,  \quad \forall \psi \in H^{-1/2}(\Gamma).
\end{equation}
\begin{equation}\label{h1}
-{\nabla_{\Gamma} \cdot}(\mu_1-\mu_2) \nabla_{\Gamma} \phi + {k^2}(\beta_1 - \beta_2)\phi = 0, \quad \forall \phi \in {V_0(\Gamma)}.
\end{equation}
By taking $\psi=1$ {in (\ref{h0})}  we obtain that $\alpha_1=\alpha_2$ on $\Gamma$. Next consider (\ref{h1}) and for {any} $y_0\in \Gamma$, let $B_{\rho}(y_0)$ be an open ball centered at $y_0$ of radius $\rho>0$ such that $B_{\rho}(y_0) \cap \Gamma$ is contained in the interior of $\Gamma$. Consider a smooth function $\phi \in C^\infty(\Gamma)$ such that $\phi = 1$ in $B_{\rho/4}(y_0) \cap \Gamma$ and $\phi = 0$ in  $\Gamma \setminus \overline{B_{\rho/2}(y_0)}$. Here, $B_{\rho/4}(y_0)$ and $B_{\rho/2}(y_0)$ are the balls centered at $y_0$ of radius, respectively, $\rho/4$ and $\rho/2$ so we have $B_{\rho/4}(y_0) \subset B_{\rho/2}(y_0) \subset B_{\rho}(y_0)$. Then, since now (\ref{h1}) holds point-wise we can conclude that 
$$
\beta_1 - \beta_2 = 0 \quad \text{ in } B_{\rho/8}\cap {\Gamma}.
$$
Since $\beta_1$ and $\beta_2$ are continuous on $\Gamma$ and $y_0 \in B_{\rho/8}\cap {\Gamma}$, we have $\beta_1(y_0) = \beta_2(y_0)$. Since $y_0$ is arbitrary, we conclude that the latter holds for all $y_0 \in \Gamma$. Therefore, $\beta_1 = \beta_2$ on $\Gamma$. Finally letting $\beta_1 - \beta_2=0$ in (\ref{h1}), multiplying by $\phi \in {V_0(\Gamma)}$ and integrating by parts we obtain
\begin{equation}\label{matrix}
0=-\int_{\Gamma} \nabla_{\Gamma} \big((\mu_1-\mu_2) \nabla_{\Gamma} \phi \big) {\phi} \dd S =  \int_{\Gamma} (\nabla_{\Gamma} {\phi}) \cdot (\mu_1-\mu_2) (\nabla_{\Gamma} \phi)  \dd S,
\end{equation}
for all $\phi \in {V_0(\Gamma)}$. If $\mu_1(x_0) \neq \mu_2(x_0)$ for some $x_0 \in \Gamma$, without loss of generality, $\mu_1(x_0) - \mu_2(x_0) \geq {\mu}_0 >0$ for some constant ${\mu}_0$ (where in the case of ${\mathbb R}^3$ the inequality is understood in terms of positive definite tensors). By continuity  there exists $\epsilon >0$ such that $\mu_1(x) - \mu_2(x)>0$ for all $x \in \Gamma\cap B_{\epsilon}(x_0)$, where $B_{\epsilon}(x_0)$ is the ball of radius $\epsilon$ centered at $x_0$. We choose a smooth function $\phi$ compactly supported in $\Gamma \cap B_{\epsilon}(x_0)$ in (\ref{matrix}). The positivity of   $\mu_1 - \mu_2$ implies  that $\nabla_{\Gamma} \phi= 0$ on $\Gamma\cap B_{\epsilon}(x_0)$, hence $\phi$ is a constant on $\Gamma \cap B_{\epsilon}(x_0)$. However, this contradicts  the fact that $\phi$ is compactly supported in $\Gamma \cap B_{\epsilon}(x_0)$. Therefore, $\mu_1 = \mu_2$ on $\Gamma$.
 \end{proof}
\noindent
Instead of reconstructing the coefficients based  on optimization  techniques, in the following we propose and analyze a spectral target signature measurable from the scattering data which identifies changes in the constitutive material properties  of the screen.  We remark that this target signature can detect such changes without knowing the base ``healthy" value the coefficients nor reconstructing them.
\section{The Modified Far Field Operator and a Related Eigenvalue Problem}
The scattering data defines  the {\it far field operator} $F: L^2(\mathbb{S}^{m-1}) \rightarrow L^2(\mathbb{S}^{m-1})$ by
\begin{align*}
Fg (\hat{x}) = \int_{\mathbb{S}^{m-1}} u_\infty(\hat{x}, d) g(d) \dd S(d).
\end{align*}
The injectivity of this operator is related to the geometry of $\Gamma$, in particular $F$ is injective with dense range if and only if there is {\it {no Herglotz wave function}} given by
\begin{equation}\label{hf}
u^i_g(x):=\int_{\mathbb{S}^{m-1}}g(d)e^{ikx\cdot d} \dd S(d), \qquad g\in L^2(\mathbb{S}^{m-1})
\end{equation}
such that $\ip{\partial u^i_g/ \partial \nu} |_\Gamma=0$ and $ {\big( - \nabla_\Gamma \cdot \mu \nabla_\Gamma + k^2 \beta \big)} \ip{u^i_g}|_\Gamma=0$ \cite{CC2003}, \cite{H-thesis}. Hence to introduce the eigenvalue problem we need the following auxiliary scattering problem. Let $D$ {be} a bounded connected region $D\subset {\mathbb R}^m$ such that  $\Gamma \subset \partial D$ and  $\lambda \in \mathbb{C}$ with $\Im (\lambda) \geq 0$, then  find $h^{(\lambda)}$ such that
\begin{align}\label{auxprob}
\left\{\begin{aligned}\quad
&\Delta h^{(\lambda)} + k^2 h^{(\lambda)} = 0 \quad  \text{in } \mathbb{R}^m \setminus \overline{D}, \\
& h^{(\lambda)} = h^{(\lambda),s} + u^i \\
 &h^{(\lambda)}=0 \quad  \text{on } \Gamma, \\
& \frac{\partial h^{(\lambda)}}{\partial \nu} + \lambda h^{(\lambda)} = 0 \quad  \text{on } \partial D \setminus \Gamma,
     \end{aligned}\right.
\end{align}
where $h^{(\lambda), s}$ is the scattered field and $u^i = e^{ik d \cdot x}$ and $h^{(\lambda),s}$ satisfies the Sommerfeld radiation radiation condition  (\ref{SR-c}). Like above, $h^{(\lambda),s}(x,d)$  satisfies  the asymptotic behavior (\ref{asymp}) with $h^{(\lambda)}_\infty(\hat{x}, d)$ the corresponding far-field pattern. The latter defines the corresponding far field operator $F^{(\lambda)}: L^2(\mathbb{S}^{m-1}) \rightarrow L^2(\mathbb{S}^{m-1})$ by
\begin{align*}
F^{(\lambda)}g (\hat{x}) = \int_{\mathbb{S}^{m-1}} h^{(\lambda)}_\infty(\hat{x}, d)  g(d) \dd S(d).
\end{align*}
Now we consider the {\it{modified far field operator}} $\mathcal{F}: L^2(\mathbb{S}^{m-1}) \rightarrow L^2(\mathbb{S}^{m-1})$ 
\begin{align}\label{modifiedffo}
\mathcal{F} g (\hat{x}):= (Fg -F^{(\lambda)}g) (\hat{x}) =\int_{\mathbb{S}^{m-1}} \big( u_\infty(\hat{x};d) - h^{(\lambda)}_\infty(\hat{x};d) \big) g(d) \dd S(d).
\end{align}
Note that for the purpose of the inverse problem we assume that $\mathcal{F} g$ is known since $Fg$ is known from measurements whereas $F^{(\lambda)}$ can be precomputed since the problems involves only $\Gamma$ which we assume is known. Now we ask the question whether the modified far field operator is injective. Indeed, assume that $\mathcal{F}g = 0$ for some $g \in L^2(\mathbb{S}^{m-1})$ and  let $u^i_g$ be the Herglotz wave function (\ref{hf}) with {kernel $g$}. Define solutions $u_g := u_g^s + u_g^i$ and $h_g^{(\lambda)} := h_g^{(\lambda), s}+ u_g^i$ of the scattering problems \eqref{totalfield} and \eqref{auxprob}, respectively, with their far-field patterns $u_{g, \infty}$ and $h^{(\lambda)}_{g, \infty}$.   By linearity, since $\mathcal{F}g = u_{g, \infty} - h^{(\lambda)}_{g, \infty}$, we have  that $\mathcal{F}g = 0$ implies that $u_{g, \infty} = h^{(\lambda)}_{g, \infty}$ on $\mathbb{S}^{m-1}$. From Rellich's Lemma, we have that $u_g^s = h_g^{(\lambda),s}$  in $\mathbb{R}^m \setminus D$. Then, $u_g^+ = h_g^{(\lambda)}$ in $\mathbb{R}^m \setminus \overline{D} $ where $u_g^+ = u_g|_{\mathbb{R}^m \setminus \overline{D}}$. From the boundary conditions in \eqref{auxprob}, $u_g^+ = 0$ on $\Gamma$ and  $\frac{\partial u_g^+}{\partial \nu} + \lambda u_g^+ = 0$ on $ \partial D \setminus \Gamma$. Then, from the boundary conditions in \eqref{totalfield}, $u_g^- = u_g|_D$ satisfies the following:
\begin{align}\label{egprob}
\left\{\begin{aligned}\quad
&\Delta h + k^2 h = 0 \quad  \text{in } D, \\
& \frac{\partial h}{\partial \nu} = -{\frac{1}{4}}\Big( - \nabla_\Gamma \cdot {\mu}\nabla_\Gamma + k^2 \beta +{\frac{4}{\alpha}} \Big) h \quad  \text{on }\Gamma, \\
 & \frac{\partial h}{\partial \nu} + \lambda h =0 \quad  \text{on } \partial D \setminus \Gamma,\\
&h=0 \quad  \text{on } \partial \Gamma
     \end{aligned}\right.
\end{align}
where $h:=u_g^-$.  Thus, if $\lambda$ is not an eigenvalue of \eqref{egprob}, then $\partial h / \partial \nu = h = 0$  on $\partial D \setminus \Gamma$. By Holmgren's Theorem, $u_g^-:=h \equiv 0$ in $D$ and hence by unique continuation $u_g = 0$ in $\mathbb{R}^m \setminus \Gamma$. In addition from the condition on $\Gamma$ in  \eqref{totalfield} we obtain that both jumps of $u_g$ and ${\partial u_g/\partial \nu}$ are zero. Which means that $u_g$ satisfies the Helmholtz equation in ${\mathbb R}^m$ and $u_g^s=-u_g^i$.  Since $u_g^s$ is a radiating solution while $u_g^i$ is not, $g$ must be zero. Therefore, we have proved the following lemma.
\begin{lemma}\label{F_injective}
Assume that $\lambda \in \mathbb{C}$ with $\Im(\lambda)\geq 0$ is not an eigenvalue of \eqref{egprob}. Then, the modified far-field operator $\mathcal{F}: L^2(\mathbb{S}^{m-1}) \rightarrow L^2(\mathbb{S}^{m-1})$ is injective.
\end{lemma}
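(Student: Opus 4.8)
The plan is to show directly that $\mathcal{F}g=0$ forces $g=0$, following the principle that a field which scatters identically from the true screen problem and from the auxiliary problem must already vanish. Suppose $g\in L^2(\mathbb{S}^{m-1})$ satisfies $\mathcal{F}g=0$, let $u^i_g$ be the Herglotz wave function (\ref{hf}) with kernel $g$, and let $u_g=u^s_g+u^i_g$ and $h^{(\lambda)}_g=h^{(\lambda),s}_g+u^i_g$ be the solutions of (\ref{totalfield}) and (\ref{auxprob}) with this incident field. By linearity $\mathcal{F}g=u_{g,\infty}-h^{(\lambda)}_{g,\infty}$, so $\mathcal{F}g=0$ together with Rellich's lemma and unique continuation gives $u^s_g=h^{(\lambda),s}_g$ in $\mathbb{R}^m\setminus\overline D$, hence $u_g^+:=u_g|_{\mathbb{R}^m\setminus\overline D}=h^{(\lambda)}_g$ there.

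Next I would transfer the boundary data of (\ref{auxprob}) to $u_g^+$, obtaining $u_g^+=0$ on $\Gamma$ and $\partial u_g^+/\partial\nu+\lambda u_g^+=0$ on $\partial D\setminus\Gamma$. Since $\Gamma$ is the only interface, $u_g$ is smooth across the fictitious part $\partial D\setminus\Gamma$, so the interior field $h:=u_g|_D$ inherits $\partial h/\partial\nu+\lambda h=0$ there, while $h=0$ on $\partial\Gamma$ because $u_g\in V_0$. The crux is to combine the two transmission conditions of (\ref{totalfield}) across $\Gamma$: using $u_g^+=0$ on $\Gamma$ we have $[u_g]=-h$ and $\ip{u_g}=h/2$, so $[u_g]=\alpha\ip{\partial u_g/\partial\nu}$ reads $\partial u_g^+/\partial\nu+\partial h/\partial\nu=-2h/\alpha$, and $[\partial u_g/\partial\nu]=(-\nabla_\Gamma\cdot\mu\nabla_\Gamma+k^2\beta)\ip{u_g}$ reads $\partial u_g^+/\partial\nu-\partial h/\partial\nu=\tfrac12(-\nabla_\Gamma\cdot\mu\nabla_\Gamma+k^2\beta)h$. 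Subtracting these eliminates $\partial u_g^+/\partial\nu$ and produces exactly the $\Gamma$-condition of (\ref{egprob}); thus $h$ solves the homogeneous problem (\ref{egprob}) with parameter $\lambda$.

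Since $\lambda$ is not an eigenvalue of (\ref{egprob}), we conclude $h\equiv 0$ in $D$. Feeding $h=0$ on $\Gamma$ back into the two relations above gives $u_g^+=\partial u_g^+/\partial\nu=0$ on $\Gamma$; because $u_g^+$ solves the Helmholtz equation in the (connected) exterior $\mathbb{R}^m\setminus\overline D$ and $\Gamma$ is part of $\partial D$, Holmgren's theorem together with unique continuation yields $u_g^+\equiv 0$ in $\mathbb{R}^m\setminus\overline D$. Hence $u_g\equiv 0$ in $\mathbb{R}^m\setminus\overline\Gamma$, so both jumps $[u_g]$ and $[\partial u_g/\partial\nu]$ vanish and $u^s_g=-u^i_g$ extends to a solution of the Helmholtz equation in all of $\mathbb{R}^m$ that also satisfies the radiation condition; since $u^s_g$ radiates while $u^i_g$ does not, both must vanish, so $u^i_g\equiv 0$ and therefore $g=0$ by injectivity of the Herglotz operator on $L^2(\mathbb{S}^{m-1})$.

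The main obstacle I anticipate is the trace/duality bookkeeping: making the manipulation of the second-order surface operator $-\nabla_\Gamma\cdot\mu\nabla_\Gamma+k^2\beta$ rigorous in the spaces $V_0(\Gamma)$ and $V^{-1}(\Gamma)$, and keeping precise track of which one-sided traces of $u_g$ and $\partial u_g/\partial\nu$ enter each transmission condition while deriving (\ref{egprob}). A secondary point is that $h\equiv 0$ settles only the interior of $D$, so a separate unique-continuation step is needed in the exterior, and one should check that the merely piecewise-smooth $\partial D$ and the open manifold $\Gamma$ still permit Holmgren's theorem to be applied there.
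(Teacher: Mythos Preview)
Your argument is correct and follows essentially the same route as the paper's proof: identify $u_g^+$ with $h_g^{(\lambda)}$ via Rellich, transfer the boundary data of \eqref{auxprob} to $u_g^+$, derive that $h:=u_g|_D$ solves \eqref{egprob}, conclude $h\equiv 0$, and then force $u_g\equiv 0$ and $g=0$. The only cosmetic difference is in the last unique-continuation step: the paper propagates $u_g=0$ to the exterior from $\partial D\setminus\Gamma$ (where $u_g$ and $\partial u_g/\partial\nu$ inherit zero Cauchy data from $h\equiv 0$ across the jump-free part of $\partial D$), whereas you propagate from $\Gamma$ after recovering $\partial u_g^+/\partial\nu=0$ there; both are valid.
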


\begin{remark} From Lemma \ref{F_injective} we know  that   if ${\mathcal F} g=0$  {has} a non-trivial solution, then $\lambda$  is an eigenvalue of \eqref{egprob}.  Note that the converse is not necessarily true, i.e. if $\lambda$ is an eigenvalue of \eqref{egprob}, this doesn't mean that ${\mathcal F}$ is not injective, which will become clear in the following section. Nevertheless the above connection between the modified far field operator ${\mathcal F}$ and the eigenvalue problem  \eqref{egprob} can be exploited to  detect these eigenvalues  from the scattering data.  		
\end{remark}
In the same way as in Lemma 3 in \cite{Screens} one can also show that if  $\lambda$ is not an eigenvalue of  \eqref{egprob}, then the range of ${\mathcal{F}}:L^2(\mathbb{S})\to L^2(\mathbb{S})$ is dense. This is needed when applying the linear sampling method to determine the eigenvalues of \eqref{egprob} from a knowledge of ${\mathcal F}$. Now we are in a position to define precisely the target signatures considered in this paper:

\begin{definition}[Target signatures for the screen $\Gamma$] Given a screen $\Gamma$ and a domain $D$ with $\Gamma\subset\partial D$ the target signature for $\Gamma$ is the set of  eigenvalues of \eqref{egprob}.
\end{definition}

\noindent 
We show in this paper that the target signature for the screen $\Gamma$ is determined from the measured far field data, and hence it can be used to identify changes in the coefficients $\alpha, \beta, \mu$ without reconstructing them.
\section{The Analysis of the Eigenvalue Problem}\label{analEP}
We proceed with the analysis of the eigenvalue problem \eqref{egprob}. In particular we show that its spectrum is discrete, for real value $\beta$ there exist infinitely many real  eigenvalues, and provide relations between the eigenvalues and coefficients $\alpha, \beta, \mu$.

\noindent
If $h \in V_0(D)$ is a nonzero solution to \eqref{egprob}, then $h$ and $\lambda$ satisfy
\begin{multline}\label{variationh}
\int_D  \big( \nabla h \cdot \nabla \overline{\varphi} - k^2 h \overline{\varphi} \big)  \dd x 
+ \frac{1}{{4}}\int_\Gamma \Big( - \nabla_\Gamma \cdot {\mu} \nabla_\Gamma + k^2 \beta + \frac{{4}}{\alpha} \Big)h \overline{\varphi}  \dd S \\
= - \lambda \int_{\partial D \setminus \Gamma} h \overline{\varphi} \dd S, \quad \forall \varphi \in V_0{(D)}.
\end{multline}
\begin{lemma}
Assume that ${\Im({\beta})} \leq 0$. If ${\Im({\lambda})} < 0$ then \eqref{egprob} has only the trivial solution in $V_0(D)$.
\end{lemma}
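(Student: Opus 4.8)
The plan is to test the weak formulation \eqref{variationh} with $\varphi = h$, which is admissible since $h \in V_0(D)$. Integrating the Laplace--Beltrami term by parts on the manifold $\Gamma$ and using $h = 0$ on $\partial \Gamma$ (so the boundary contribution over $\partial\Gamma$ vanishes and $\nabla_\Gamma h \in L^2(\Gamma)$ makes the bilinear term well defined), one obtains the energy identity
\begin{multline*}
\int_D \big(|\nabla h|^2 - k^2 |h|^2\big)\dd x + \frac14 \int_\Gamma \Big( \mu \nabla_\Gamma h \cdot \overline{\nabla_\Gamma h} + k^2 \beta |h|^2 + \frac{4}{\alpha}|h|^2 \Big)\dd S \\
= -\lambda \int_{\partial D \setminus \Gamma} |h|^2 \dd S .
\end{multline*}

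Next I would take the imaginary part of this identity. The wave number $k$ is real, $\alpha$ is real valued by assumption, and $\mu$ is a real symmetric tensor, so $\mu \nabla_\Gamma h \cdot \overline{\nabla_\Gamma h}$ is real; hence the only term on the left-hand side with a nonzero imaginary part is $\frac{k^2}{4}\int_\Gamma \beta |h|^2 \dd S$. This yields
\[
\frac{k^2}{4}\int_\Gamma \Im(\beta)\, |h|^2 \dd S = -\Im(\lambda) \int_{\partial D \setminus \Gamma} |h|^2 \dd S .
\]
Since $\Im(\beta) \le 0$ the left-hand side is nonpositive, while $\Im(\lambda) < 0$ makes the right-hand side nonnegative; therefore both sides vanish. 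In particular $\int_{\partial D \setminus \Gamma} |h|^2 \dd S = 0$, so $h = 0$ on $\partial D \setminus \Gamma$, and then the impedance condition $\partial h / \partial \nu + \lambda h = 0$ on $\partial D \setminus \Gamma$ forces $\partial h / \partial \nu = 0$ there as well.

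It remains to conclude that $h \equiv 0$ in $D$. Since $h$ solves the Helmholtz equation in the connected domain $D$ and has vanishing Cauchy data on the relatively open piece $\partial D \setminus \overline\Gamma$ of $\partial D$ (nonempty because $\Gamma$ is a proper part of $\partial D$), Holmgren's theorem shows $h$ vanishes in a neighbourhood of that piece, and unique continuation for the Helmholtz equation then gives $h \equiv 0$ in $D$, so \eqref{egprob} has only the trivial solution. I do not anticipate a genuine obstacle here; this is the routine imaginary-part test for impedance-type eigenvalue problems, and the only steps deserving care are the integration by parts on $\Gamma$ (handled by $h \in V_0(\Gamma)$) and the observation that the reality of $k$, $\alpha$ and $\mu$ leaves $\beta$ as the sole source of imaginary contributions on the left.
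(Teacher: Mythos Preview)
Your proof is correct and follows the same strategy as the paper: test \eqref{variationh} with $\varphi=h$, take the imaginary part, and finish with Holmgren/unique continuation. The only cosmetic difference is that you read off vanishing Cauchy data on $\partial D\setminus\Gamma$ (via the impedance condition) rather than on $\Gamma$ as written in the paper; your choice is in fact the more robust one, since it does not require $\Im(\beta)$ to be strictly negative anywhere.
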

\begin{proof}
From \eqref{variationh}, we have
\[
\int_D  |\nabla h|^2  - k^2 |h|^2   \dd x 
+ \frac{1}{{4}}\int_\Gamma \mu |\nabla_\Gamma h|^2 +  \Big( k^2 \beta + \frac{{{4}}}{\alpha} \Big)|h|^2  \dd S
= - \lambda \int_{\partial D \setminus \Gamma} |h|^2\dd S.
\]
Taking the imaginary part,
\[
 \frac{k^2}{{4}}\int_\Gamma {\Im({\beta})} |h|^2  \dd S
= - \Im({\lambda})  \int_{\partial D \setminus \Gamma} |h|^2\dd S.
\]
Since $\Im({\beta}) \leq 0$ and $\Im({\lambda}) < 0$, $h =  0$ on $\Gamma$ and $\partial h/ \partial \nu = 0$ on $\Gamma$. Then Holmgren's Theorem implies that $h$ is identically zero in $D$.
\end{proof}

\begin{corollary} All the eigenvalues $\lambda\in {\mathbb C}$ of  \eqref{egprob} satisfy $\Im(\lambda)\geq 0$.
\end{corollary}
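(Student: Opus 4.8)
The statement is an immediate contrapositive of the preceding Lemma, so the plan is to argue by contradiction and invoke that Lemma directly. First I would recall that the standing hypothesis $\Im(\beta) = \beta_i/k \le 0$ is in force throughout the paper (it is built into the definition of $\beta$ in Section \ref{forIP}), so the hypothesis of the preceding Lemma is automatically satisfied for the eigenvalue problem \eqref{egprob} under consideration.

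Next, suppose for contradiction that $\lambda \in \mathbb{C}$ is an eigenvalue of \eqref{egprob} with $\Im(\lambda) < 0$. By definition of eigenvalue there is a nonzero $h \in V_0(D)$ solving \eqref{egprob}, equivalently the variational identity \eqref{variationh}. But the preceding Lemma asserts precisely that when $\Im(\beta) \le 0$ and $\Im(\lambda) < 0$, the only solution of \eqref{egprob} in $V_0(D)$ is $h \equiv 0$. This contradicts $h \ne 0$, and therefore no eigenvalue can have negative imaginary part; that is, every eigenvalue satisfies $\Im(\lambda) \ge 0$.

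I do not anticipate any real obstacle here: the corollary is nothing more than a reformulation of the Lemma, whose substance (testing \eqref{variationh} with $\varphi = h$, taking imaginary parts to isolate the term $-\Im(\lambda)\int_{\partial D\setminus\Gamma}|h|^2$ against $\tfrac{k^2}{4}\int_\Gamma \Im(\beta)|h|^2$, concluding $h = 0$ and $\partial h/\partial\nu = 0$ on $\partial D\setminus\Gamma$, then applying Holmgren's theorem) has already been carried out. The only point worth a sentence is that the assumption $\Im(\beta)\le 0$ needed to apply the Lemma is part of the model's standing assumptions, so no extra hypothesis is required in the corollary's statement.
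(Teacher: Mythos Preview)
Your proposal is correct and matches the paper's approach: the paper gives no separate proof for this corollary, treating it as an immediate consequence (contrapositive) of the preceding Lemma, exactly as you do. Your parenthetical summary of the Lemma's mechanism is also accurate; in fact, deducing $h=0$ and $\partial h/\partial\nu=0$ on $\partial D\setminus\Gamma$ (from the right-hand side of the imaginary-part identity together with the Robin condition) is arguably cleaner than the paper's own phrasing, which states these vanish on $\Gamma$.
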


\noindent
We can rewrite \eqref{variationh} as $(A+B+\lambda K)(h,\varphi)= 0$, where the sesquilinear forms  $A$, $B$, $K$ from $V_0(D)\times V_0(D)$ to ${\mathbb C}$ 
are  defined  by
\begin{align*}
A(h, \varphi)_V:=
\int_D  \nabla h \cdot \nabla \overline{\varphi} + h \overline{\varphi}   \dd x 
+ \frac{1}{{4}}\int_\Gamma \Big( \mu  \nabla_\Gamma h \cdot \nabla_\Gamma{\overline{\varphi}} + {\mu_0} h \overline{\varphi} \Big) \dd S,
\end{align*}
with some constant $\alpha_0>0$,
\begin{align*}
B(h, \varphi)_V:=
-(k^2+1)\int_D  h \overline{\varphi}   \dd x 
+ \frac{1}{{4}}\int_\Gamma\Big( k^2\beta + \frac{{4}}{\alpha} - {\mu_0} \Big) h\overline{\varphi}  \dd S,
\end{align*}
and \begin{align*}
K(h, \varphi)_V:=
\int_{\partial D \setminus \Gamma} h\overline{\varphi} \dd S.
\end{align*}
By means of the Riesz representation theorem, we define the bounded linear operators $\mathbb{A}$,  $\mathbb{B}$ and  $\mathbb{K}$ on ${V_0(D)}$  by
\begin{eqnarray}
&(\mathbb{A} h, \varphi )_{{V_0(D)}} = A(h, \varphi), \qquad  (\mathbb{B} h, \varphi )_{{V_0(D)}} = B(h, \varphi), & \nonumber\\
&\mbox{and} \quad (\mathbb{K} h, \varphi )_{{V_0(D)}} = K(h, \varphi),  \quad \forall \, h, \varphi \in V_0(D)& \label{opdef}
\end{eqnarray}
where the scalar product is given by $(\cdot, \cdot)_{{V_0(D)}} := (\cdot, \cdot)_{H^1(D)} +(\cdot, \cdot)_{H^1(\Gamma)}$. 
Recall that $\alpha, {\beta,} \mu \in L^\infty(\Gamma)$ , that $\mu$ (possible a tensor) satisfies (\ref{pos}) with some constant $\mu_0$ and that  $\Im({\beta}) \leq 0$. Then, the boundedness of $\mathbb{A}, \mathbb{B}$ and $\mathbb{K}$ follows. For any $h \in {V_0(D)}$,
\begin{align*}
A(h,h) & = \int_D | \nabla h|^2 + |h|^2    \dd x 
+ \frac{1}{{4}}\int_\Gamma \Big( \mu | \nabla_\Gamma h|^2 +{ \mu_0} |h|^2 \Big) \dd S \\
& \geq \|h\|^2_{H^1(D)} + {\frac{\mu_0}{4}} \|h\|^2_{H^1(\Gamma)} \geq {\min(1, \mu_0/4)} \|h\|^2_{{V_0(D)}},
\end{align*}
which shows the coercivity of $A$. Therefore, the operator $\mathbb{A}$ is invertible with bounded inverse.
\noindent
Next, we will show that $\mathbb{B}$ and $\mathbb{K}$ are compact operators. For any $h \in V_0(D)$,
\begin{eqnarray*}
\| \mathbb{B} h\|^2_{{V_0(D)}} 
&=& -(k^2+1) \int_D h \overline{\mathbb{B} h} \dd x
+ \frac{1}{{4}} \int_\Gamma (k^2 \beta +\frac{{4}}{\alpha} - {\mu_0} ) h \overline{\mathbb{B} h}  \dd S \\
& \leq &C \big( \| h\|_{L^2(D)}\| \mathbb{B}h\|_{L^2(D)} + \| h\|_{L^2(\Gamma)}\| \mathbb{B} h\|_{L^2(\Gamma)} \big) \\
& \leq& C \big( \| h\|_{L^2(D)}+ \| h\|_{L^2(\Gamma)} \big) \| \mathbb{B} h\|_{{V_0(D)}} 
\end{eqnarray*}
for some constant $C>0$.
Thus,
\[
 \| \mathbb{B} h\|_{{V_0(D)}}  \leq C\big( \| h\|_{L^2(D)}+ \| h\|_{L^2(\Gamma)} \big).
 \]
Then the  compactness of $\mathbb{B}$ follows from the fact that $H^1(D)$ and $H^1(\Gamma)$ are compactly embedded in $L^2(D)$ and $L^2(\Gamma)$, respectively. Similarly, the compactness of $\mathbb{K}$ follows from
\[
\|\mathbb{K} h \|_{{V_0(D)}} \leq C\| h\|_{L^2(D)}
\]
for some constant $C>0$.

\noindent
Finally the Analytic Fredholm Theory \cite{ColtonKress13} applied to $I +  \mathbb{A}^{-1}( \mathbb{B} + \lambda \mathbb{K}) $ implies that the set of eigenvalues $\lambda\in {\mathbb C}$ is discrete with $\infty$ as the only possible  accumulation point.

\subsection{Relations between eigenvalues and the surface parameters}
We would like to understand how the eigenvalues of the eigenvalue problem \eqref{egprob} relate to the known coefficients $\alpha$, $\mu$ and $\beta$, which satisfy the assumptions in  Section \ref{forIP}. Let us fix a $\tau>0$ such that $k^2$ is not an eigenvalue of the mixed Dirichlet-Generalized Impedance eigenvalue problem of finding $h\in V_0(D)$
\begin{align*}
\left\{\begin{aligned}\quad
&\Delta h + k^2 h = 0 \quad  \text{in } D, \\
& \frac{\partial h}{\partial \nu} = -\frac{1}{{4}}\Big( - \nabla_\Gamma \cdot {\mu}\nabla_\Gamma + k^2 \beta +\frac{{4}}{\alpha} \Big) h \quad  \text{on }\Gamma, \\
 & \frac{\partial h}{\partial \nu} + \tau h =0 \quad  \text{on } \partial D \setminus \Gamma.
\end{aligned}\right.
\end{align*}
For a given wave number $k$, we can always find such a $\tau$ because from the above Fredholm property of \eqref{egprob} this problem can have a nontrivial solution only for a discrete set of the parameter $\lambda$. The choice of $\tau$ guarantees that the operator $(\mathbb{A} + \mathbb{B} + \tau \mathbb{K}): V_0(D)\to V_0(D)$ is invertible,  where the operators $\mathbb{A}, \mathbb{B}$ and $\mathbb{K}$ are defined by \eqref{opdef}.
Therefore we can define the operator $\mathcal{R} : L^2(\partial D \setminus \Gamma) \rightarrow L^2(\partial D \setminus \Gamma)$ that maps a function $\theta \in L^2(\partial D \setminus \Gamma)$ into $h_\theta |_{\partial D\setminus \Gamma}$ where  $h_\theta$, is the unique solution
\[
\left((\mathbb{A} + \mathbb{B} + \tau \mathbb{K})h_\theta, \varphi\right)  = \int_{\partial D \setminus \Gamma} \theta  \overline{\varphi} \dd S.
\]
Since $h_\theta|_{\partial D \setminus \Gamma} \in H^{\frac{1}{2}}(\partial D \setminus \Gamma)$ and $H^{\frac{1}{2}}(\partial D \setminus \Gamma)$ is compactly embedded into $L^2(\partial D \setminus \Gamma)$, the operator $\mathcal{R}$ is compact. Such $h_\theta$ exists from the choice of $\tau$. Then, we see  that $\lambda \in {\mathbb C}$ is an eigenvalue of \eqref{egprob} if and only if
\begin{equation}\label{rf}
(-\lambda + \tau) \mathcal{R} \theta = \theta
\end{equation}
for some nonzero $\theta$. In other words, $\frac{1}{-\lambda + \tau}$  is an eigenvalue of the compact operator $\mathcal{R}$. In particular if we assume that $\Im(\beta)=0$, then the operator $\mathcal{R}$ is self-adjoint, hence for a fixed $\tau$, the eigenvalues $\frac{1}{-\lambda_j + \tau}$ of the operator $\mathcal{R}$ are all real and accumulate to $0$ as $j \rightarrow \infty$. Hence we conclude that in this case all eigenvalues of  \eqref{egprob} are real, and there exists an infinite sequence of $\{\lambda_j\}_{j\geq 1}$ of  real eigenvalues that accumulate to $\pm  \infty$ as $j \rightarrow \infty$ (because the operator ${\mathcal R}$ is not sign definite the eigenvalues may in principle accumulate to both $+\infty$ and $-\infty$). However, in the next theorem we show that the eigenvalues accumulate only to $-\infty$. In addition the corresponding eigenfunctions form a Riesz basis for $V_0(D)$.
\begin{remark} 
If $\Im(\beta)>0$ the eigenvalue problem \eqref{egprob} is non-selfadjoint and in this case all eigenvalues are complex with $\Im(\lambda)>0$. Then, using the theory of Agmon on non self-adjoint eigenvalue problem {in} \cite{Agmon} is possible to prove in a similar way as in \cite{CakoniColtonMengMonk16}  that for smooth coefficients there exits an infinite set of  complex eigenvalues in the upper half complex plane asymptotically  approaching the negative real axis. One could handle the case of existence of eigenvalues for complex coefficients by modifying the impedance condition  on $\partial D\setminus \Gamma$  in the auxiliary and consequently in the eigenvalue problem by introducing a smoothing boundary operator along the lines of the ideas in \cite{C2020} which makes the non self-adjoint operator ${\mathcal R}$ a trace class operator. This idea is considered in \cite{H-thesis}
\end{remark}

\begin{theorem}
If $k^2$ is not an eigenvalue of
\begin{eqnarray}
&\Delta u + k^2 u= 0 \:\:  \text{in }D& \label{evprob1}\\
& \displaystyle{\frac{\partial u}{\partial \nu}} = -\frac{1}{{4}}\Big( - \nabla_\Gamma \cdot \mu\nabla_\Gamma + k^2 \beta + \frac{{4}}{\alpha} \Big) u \:\: \text{on }\Gamma, \quad 
 u=0 \:\:  \text{on } \partial D \setminus \Gamma& \nonumber
\end{eqnarray}
 then there are at most finitely many positive eigenvalues $\lambda$ of \eqref{egprob}.
\end{theorem}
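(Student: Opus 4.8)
The plan is to argue by contradiction. Suppose that \eqref{egprob} had infinitely many positive eigenvalues. Since we have already shown, via the Analytic Fredholm Theorem, that the set of eigenvalues is discrete with $\infty$ as its only possible accumulation point, there would then exist a sequence $\lambda_j>0$ with $\lambda_j\to+\infty$ and corresponding eigenfunctions $h_j\in V_0(D)$, which I normalize so that $\|h_j\|_{V_0(D)}=1$. The goal is to extract a weak limit $h$ of the $h_j$ and to show that $h$ is a \emph{nontrivial} solution of \eqref{evprob1} with eigenvalue $k^2$, which contradicts the hypothesis that $k^2$ is not an eigenvalue of \eqref{evprob1}.

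First I would take $\varphi=h_j$ in the variational identity \eqref{variationh}. We may take $\beta$ real in this argument: for a positive, hence real, eigenvalue the imaginary part of \eqref{variationh} with $\varphi=h_j$ reads $\tfrac{k^2}{4}\int_\Gamma\Im(\beta)\,|h_j|^2\dd S=0$, so only $\Re(\beta)$ enters (otherwise one simply takes real parts throughout). Because $\lambda_j>0$, the boundary term $-\lambda_j\int_{\partial D\setminus\Gamma}|h_j|^2\dd S$ is nonpositive, and using the positive definiteness of $\mu$ one obtains an a priori estimate of the form
\[
\min(1,\mu_0/4)\Big(\|\nabla h_j\|_{L^2(D)}^2+\|\nabla_\Gamma h_j\|_{L^2(\Gamma)}^2\Big)\ \le\ k^2\|h_j\|_{L^2(D)}^2+C\,\|h_j\|_{L^2(\Gamma)}^2,
\]
where $C$ depends only on $k$ and the $L^\infty(\Gamma)$ norms of $\beta$ and $\alpha^{-1}$. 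The same identity, bounding its left-hand side in terms of $\|h_j\|_{V_0(D)}^2=1$, shows that $\lambda_j\|h_j\|_{L^2(\partial D\setminus\Gamma)}^2$ remains bounded, hence $\|h_j\|_{L^2(\partial D\setminus\Gamma)}\to0$.

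The sequence $(h_j)$ being bounded in $V_0(D)$, I pass to a subsequence with $h_j\rightharpoonup h$ in $V_0(D)$; by the compact embeddings $H^1(D)\hookrightarrow L^2(D)$, $H^1(\Gamma)\hookrightarrow L^2(\Gamma)$ and the compactness of the trace map $H^1(D)\to L^2(\partial D)$, we obtain $h_j\to h$ strongly in $L^2(D)$, $L^2(\Gamma)$ and $L^2(\partial D)$, whence $h=0$ on $\partial D\setminus\Gamma$. The decisive point is that $h\ne0$: if $h$ vanished, the right-hand side of the displayed estimate would tend to $0$, forcing $\|\nabla h_j\|_{L^2(D)}^2+\|\nabla_\Gamma h_j\|_{L^2(\Gamma)}^2\to0$; combined with $\|h_j\|_{L^2(D)}\to0$ and $\|h_j\|_{L^2(\Gamma)}\to0$ this would give $\|h_j\|_{V_0(D)}\to0$, contradicting the normalization. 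Finally I restrict \eqref{variationh} to test functions $\varphi\in V_0(D)$ that vanish on $\partial D\setminus\Gamma$, so that the $\lambda_j$-term disappears, and pass to the limit using $h_j\rightharpoonup h$ in $V_0(D)$ (which yields weak $L^2$ convergence of $\nabla h_j$ in $L^2(D)$ and of $\nabla_\Gamma h_j$ in $L^2(\Gamma)$) together with the strong $L^2$ convergences above; after integrating by parts the limiting identity is exactly the weak formulation of \eqref{evprob1} with eigenvalue $k^2$. Since $h\ne0$ and $h=0$ on $\partial D\setminus\Gamma$, this means $k^2$ is an eigenvalue of \eqref{evprob1} --- a contradiction. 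Hence \eqref{egprob} has at most finitely many positive eigenvalues; together with the existence of infinitely many real eigenvalues established above, they can only accumulate at $-\infty$.

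The hard part is the step showing the weak limit $h$ is nontrivial: weak convergence alone does not preserve norms, so one must exploit the favorable sign of the $\lambda_j$-boundary term to dominate the $V_0(D)$-seminorm of $h_j$ by its lower-order part, thereby excluding a ``gradient concentration'' scenario in which $h_j\rightharpoonup0$. Everything else is a routine passage to the limit, in the spirit of the analysis in \cite{CakoniColtonMengMonk16}.
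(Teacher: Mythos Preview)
Your argument is correct and follows essentially the same route as the paper: contradiction, normalize eigenfunctions, use the sign of the $\lambda_j$-boundary term to get $\|h_j\|_{L^2(\partial D\setminus\Gamma)}\to 0$, extract a weak limit, and contradict either the normalization or the hypothesis on $k^2$. The only cosmetic difference is that the paper first passes to the limit in \eqref{variationh} to identify $h$ as a solution of \eqref{evprob1}, invokes the hypothesis to get $h=0$, and \emph{then} uses the energy identity to force $\|h_j\|_{V_0(D)}\to 0$; you swap the order, first proving $h\neq 0$ from the a priori estimate and then showing $h$ solves \eqref{evprob1}. Your explicit restriction to test functions vanishing on $\partial D\setminus\Gamma$ when passing to the limit is in fact a bit cleaner, since it sidesteps the issue that $\lambda_j\int_{\partial D\setminus\Gamma}h_j\overline{\varphi}\,\dd S$ need not converge for general $\varphi$.
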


\begin{proof}
Assume to the contrary that there exists a sequence of positive eigenvalues $\lambda_j > 0$ such that $\lambda_j \rightarrow \infty$ as $j \rightarrow \infty$ with normalized eigenfunctions $h_j$ satisfying
\begin{align}\label{normalizedsq}
\|h_j \|_{H^1(D)} + \|h_j\|_{H^1(\Gamma)} = 1.
\end{align}
From \eqref{variationh},
\begin{multline}\label{egf}
\int_D |\nabla h_j |^2 - k^2 |h_j |^2  \dd x 
+ \frac{1}{{4}}\int_\Gamma  \mu |\nabla_\Gamma h_j |^2 + \Big( k^2 \beta + \frac{{4}}{\alpha} \Big)|h_j |^2  \dd S \\
= \int_{\partial D \setminus \Gamma}(- \lambda_j )|h_j|^2  \dd S.
\end{multline}
Since the left-hand side is bounded and $\lambda_j \rightarrow \infty$, $h_j \rightarrow 0$ in $L^2(\partial D \setminus \Gamma)$. Then, up to a subsequence, $h_j(x) \rightarrow 0$ for almost all  $x \in \partial D \setminus \Gamma$. By the assumption \eqref{normalizedsq}, there exists a subsequence, still denoted by $\{h_j\}_{j\in {\mathbb N}}$, that converges weakly in $V_0(D)$ to some $h \in V_0(D)$. In particular $h=0$ in $D \setminus \Gamma$. Furthermore, since each $h_j$ satisfies \eqref{variationh}
\begin{multline*}
\int_D  \big( \nabla h_j \cdot \nabla \overline{\varphi} - k^2 h_j \overline{\varphi} \big)  \dd x 
+ \frac{1}{{4}}\int_\Gamma \mu \nabla_\Gamma h_j \cdot \nabla_\Gamma \overline{\varphi}  +\Big(  k^2 \beta + \frac{{4}}{\alpha} \Big)h_j \overline{\varphi}  \dd S \\
= - \int_{\partial D \setminus \Gamma} \lambda_j h_j \overline{\varphi} \dd S, \quad \forall \varphi \in V_0{(D),}
\end{multline*}
we obtain that the weak limit $h$ in addition satisfies
\[
\Delta h + k^2 h = 0 \quad  \text{in } D \quad \text{and} \quad 
 \frac{\partial h}{\partial \nu} = -\frac{1}{{4}}\Big( - \nabla_\Gamma \cdot \mu\nabla_\Gamma + k^2 \beta {+\frac{4}{\alpha}} \Big) h \quad  \text{on }\Gamma.
\]
From the assumption that $k^2$ is not an eigenvalue of \eqref{evprob1}, we conclude that $h=0$ in $D$. So, we have that $h_j$ converges weakly to $h=0$ in $V_0(D)$. Therefore, up to a subsequence, $h_j$ strongly converges to $0$ in $L^2(D)$ and $L^2(\Gamma)$. From \eqref{egf}, we obtain that up to a subsequence $\| \nabla h_j\|_{L^2(D)} \rightarrow 0$ and $\| \nabla_\Gamma h_j\|_{L^2(\Gamma)} \rightarrow 0$ as $j \rightarrow \infty$. This contradicts to the assumption \eqref{normalizedsq}.
\end{proof}

\noindent
For $k$ large enough one can show there exists at least one positive eigenvalue. To show this let us assume to the contrary that all eigenvalues $\lambda_j$ are nonpositive. From \eqref{variationh}, each eigenfunction $h_j$ corresponding to $\lambda_j$ satisfies
\begin{multline*}
\int_D | \nabla h_j|^2  - k^2 |h_j|^2\dd x + \frac{1}{{4}} \int_\Gamma \Big( k^2 \beta + \frac{{4}}{\alpha} \Big) |h_j|^2 +\mu | \nabla_\Gamma h_j|^2 \dd S 
 =  \int_{\partial D \setminus \Gamma} (-\lambda_j)|h_j|^2 \dd S.
\end{multline*}
Since the right-hand side is nonnegative for each $j$,
\[
\int_D | \nabla h_j|^2  - k^2 |h_j|^2\dd x + \frac{1}{{4}} \int_\Gamma \Big( k^2 \beta + \frac{{4}}{\alpha} \Big) |h_j|^2 +\mu | \nabla_\Gamma h_j|^2 \dd S  \geq 0.
\]
The set of  eigenfunctions  $\{h_j\}_{j\in {\mathbb N}}$  form a basis  for $V_0(D)$ since this is an eigenvalue problem for a self-adjoint  and compact operator, hence from the above we have that  
\begin{align}\label{ineq1}
\int_D | \nabla h|^2  - k^2 |h|^2\dd x + \frac{1}{{4}} \int_\Gamma \Big( k^2 \beta +\frac{{4}}{\alpha} \Big) |h|^2 +\mu | \nabla_\Gamma h|^2 \dd S \geq 0, \quad \forall h \in {V_0(D)}.
\end{align}
Let $h_0$  be a Dirichlet eigenfunction corresponding to the first Dirichlet eigenvalue $-\eta_0$ for Negative laplacian in $D$. Obviously $h_0\in V_0(D)$  and its satisfies
\begin{align*}
\int_D | \nabla h_0|^2  - \eta_0 |h_0|^2\dd x =0.
\end{align*}
Taking $h:=h_0$ in (\ref{ineq1})  we obtain 
\[
0 \leq - \int_D (k^2-\eta_0)|h_0|^2\dd x, 
\]
and if  $k^2 \geq \eta_0$, this is a contradiction.  If $k^2\beta+{4}/\alpha<0$ it is possible to show that a positive eigenvalue exists for smaller $k$ by choosing $h_0$ apropriately.

\medskip
\noindent
We close this section by giving an expression for the first eigenvalue of  \eqref{egprob}.  Let $\eta_1 := \eta_1(D, \Gamma, \tau)$ be the first eigenvalue of
\begin{eqnarray}
&\Delta u + \eta u = 0 \quad  \text{in } D&, \label{eig2}\\
& \displaystyle{\frac{\partial u}{\partial \nu} = \frac{1}{{4}}\big(  \nabla_\Gamma \cdot \mu\nabla_\Gamma \big) u \quad  \text{on }\Gamma, 
 \quad  \frac{\partial u}{\partial \nu}+ \tau u = 0} \quad  \text{on } \partial D\setminus \Gamma,  \quad  \mbox{and}\;\; u=0 \quad  \text{on } \partial \Gamma & \nonumber
\end{eqnarray}
with the additional condition $u=0$ on $\partial \Gamma$, for some $\tau >0$. Since this is an eigenvalue problem for a positive self-adjoint operator, from the Courant-Fischer inf-sup principle, we have
\begin{equation}\label{eig3}
\eta_1 = \inf_{h \in {V_0(D)}, h \neq 0} \frac{
\| \nabla u \|^2_{L^2(D)}
+ \tau \|u\|^2_{L^2(\partial D \setminus \Gamma)} + \frac{1}{{4}} {\int_\Gamma \mu |\nabla_\Gamma u |^2\dd S}}{\|u \|^2_{L^2(D)}}.
\end{equation}
This implies that for every $u \in V_0(D)$
\begin{align}\label{etaeq}
\|u \|^2_{L^2(D)} \leq \frac{1}{\eta_1} \| \nabla u \|^2_{L^2(D)}
+ \frac{\tau}{\eta_1}  \|u\|^2_{L^2(\partial D \setminus \Gamma)} +\frac{1}{{4} \eta_1} { \int_\Gamma \mu |\nabla_\Gamma u |^2 \dd S}.
\end{align}
Then, using \eqref{etaeq},  for some $\Lambda \geq 0$ we can estimate
\begin{multline}\label{lambdaest}
 \int_D  |\nabla u|^2  - k^2 |u|^2   \dd x 
+ \frac{1}{{4}}\int_\Gamma \mu |\nabla_\Gamma u|^2 +  \Big( k^2 \beta +\frac{{4}}{\alpha} \Big)|u|^2  \dd S + \Lambda \int_{\partial D \setminus \Gamma}  |u|^2\dd S \\
 \geq 
   \Big( 1- \frac{k^2}{\eta_1} \Big) \int_D  |\nabla u|^2 \dd x
+  \frac{1}{{4}}\Big( 1 - \frac{k^2}{\eta_1} \Big) \int_\Gamma \mu   |\nabla_\Gamma u|^2 \dd S
 + \frac{1}{{4}}\int_\Gamma  \Big( k^2 \beta + \frac{{4}}{\alpha} \Big)|u|^2   \dd S
\\+  \Big( \Lambda - \frac{k^2 \tau}{\eta_1} \Big)  \int_{\partial D \setminus \Gamma}  |u|^2\dd S.
\end{multline}
Choosing $\Lambda>\frac{k^2 \tau}{\eta_1}$, and  if $\inf_{\Gamma}\displaystyle{k^2 \beta + \frac{{4}}{\alpha}}>0$ and $k^2 < \eta_1$, then the left-hand side of \eqref{lambdaest} is positive with the choice of such $\Lambda$. We can write our eigenvalue problem \eqref{egprob} as in the following 
\begin{multline*}
 \int_D  \nabla u \cdot \nabla {\overline{\varphi}} - k^2 u{\overline{\varphi}}   \dd x 
+ \frac{1}{{4}}\int_\Gamma \mu \nabla_\Gamma u \cdot \nabla_{\Gamma}{\overline{\varphi}}  +  \Big( k^2 \beta + \frac{{4}}{\alpha} \Big)u {\overline{\varphi}} \dd S + \Lambda \int_{\partial D \setminus \Gamma}  u {\overline{\varphi}} \dd S
\\ = (\Lambda - \lambda) \int_{\partial D \setminus \Gamma}  u {\overline{\varphi}} \dd S, \qquad   \forall \, \varphi \in V_0(D)
\end{multline*}
Since this is an eigenvalue problem for a positive self-adjoint operator with eigenvalue parameter $\Lambda-\lambda$, we can apply the Courant-Fischer inf-sup principle to the eigenvalues $\Lambda_j := \Lambda - \lambda_j$. In particular,  we obtain 
\begin{equation}\label{eig4}
\lambda_1 = \sup_{u \in {V_0(D)}, u \neq 0}\frac{ \displaystyle{\int_D -|\nabla u|^2 + k^2 |u|^2 \dd x 
- \frac{1}{{4}} \int_\Gamma \big( k^2 \beta +\frac{{4}}{\alpha} \big) |u|^2 + \mu | \nabla_\Gamma u|^2 \dd S}}{\displaystyle{\int_{\partial D \setminus \Gamma} |u|^2 \dd S}},
\end{equation}
provided that $\inf_{\Gamma}\displaystyle{k^2 \beta + \frac{{4}}{\alpha}}>0$ and $k^2 < \eta_1$, where $\eta_1$ is defined (\ref{eig2}). Hence under these assumption the expression (\ref{eig4}) together (\ref{eig3}) shows the dependence of the first eigenvalue $\lambda_1$ on the coefficients $\alpha$, $\beta$ and $\mu$.
\section{Determination of the Eigenvalues from Far Field Data}\label{sec:deff}
In this section we show that our  target signature, i.e. the  eigenvalues of  \eqref{egprob}., can be determined from far field data. This involves a non-standard analysis of the scattering problem.  We modify the approach based on the linear sampling method  in \cite{Screens}  to our more complex problem. To this end  we can write (\ref{totalfield}) equivalently as a transmission problem: 
find $p \in V_0(D)$ and $p^s \in V(\mathbb{R}^m \setminus \overline{D})$ with $p^s+v\in V(\mathbb{R}^m \setminus \overline{D})$ (recall the definition of spaces (\ref{VD}), (\ref{VD0}) (\ref{VED}), (\ref{VED0})), such that
\begin{align}\label{transmissionprob}
\left\{\begin{aligned}\quad
&\Delta p + k^2 p = 0 \quad  \text{in } D, \\
&\Delta p^s + k^2 p^s = 0 \quad  \text{in } \mathbb{R}^m \setminus \overline{D}, \\
& (p-p^s) + \displaystyle{\frac{\alpha}{2}} \Big(\frac{\partial p}{\partial \nu} + \frac{\partial p^s}{\partial \nu}  \Big) = \varphi  \quad  \text{on } \Gamma, \\ 
&  \Big(\frac{\partial p}{\partial \nu} - \frac{\partial p^s}{\partial \nu}  \Big) + \mathcal{K}(p+p^s) = \psi    \quad  \text{on } \Gamma, \\ 
& p-p^s = \varphi_c, \quad  \text{on } \partial D \setminus \Gamma, \\
 & \frac{\partial p}{\partial \nu} - \frac{\partial p^s}{\partial \nu}  = \psi_c \quad  \text{on } \partial D \setminus \Gamma\\
 &\lim_{r \rightarrow \infty} r^{\frac{m-1}{2}} \bigg( \frac{\partial p^s}{\partial r} - ikp^s\bigg) = 0
     \end{aligned}\right.
\end{align}
where  $\varphi \in H^{-1/2}(\Gamma), \psi \in V^{-1}(\Gamma)$, $\varphi_c \in H^{1/2}(\partial D \setminus \Gamma)$ and  $\psi_c \in H^{-1/2}(\partial D \setminus \Gamma)$ be defined by 
\begin{equation}
\varphi := \Big(u^i -\frac{\alpha}{2} \displaystyle{\frac{ \partial u^i }{ \partial \nu} } \Big) \Big|_\Gamma, \quad \psi := \Big( \displaystyle{\frac{ \partial u^i }{ \partial \nu} } -\mathcal{K}u^i \Big) \Big|_\Gamma, \quad  \varphi_c := u^i|_{\partial D \setminus \Gamma}, \quad  \psi_c := \displaystyle{\frac{ \partial u^i }{ \partial \nu}}  \Big|_{\partial D \setminus \Gamma} \label{bdrycd}
\end{equation}
and  the operator $\mathcal{K}: H^1(\Gamma) \rightarrow H^{-1}(\Gamma)$ is given by
$${\mathcal K}w=\frac{1}{2}\left(-\nabla_{\Gamma}\cdot \mu\nabla_{\Gamma} w+k^2\beta w\right).$$
\noindent
Here $u^i$ in general can be any function in $V(\mathbb{R}^m \setminus \overline{D})$ or $V(D)$  with square integrable Laplacian. Now, we define the bounded linear operator $\mathcal{H} : L^2 (\mathbb{S}^{m-1}) \rightarrow H^{-1/2}(\Gamma)\times V^{-1}(\Gamma) \times H^{1/2}(\partial D \setminus \Gamma)\times H^{-1/2}(\partial D \setminus \Gamma)$ by
\begin{align*}
\mathcal{H} g :=
\Bigg( \Big(w_g^{(\lambda)}-\frac{\alpha}{2} \frac{\partial w_g^{(\lambda)} }{ \partial \nu }\Big) \Big|_\Gamma,  \Big( \frac{ \partial w_g^{(\lambda)}}{ \partial \nu} -\mathcal{K}w_g^{(\lambda)}\Big) \Big|_\Gamma, w_g^{(\lambda)}|_{\partial D \setminus \Gamma}, \frac{\partial w_g^{(\lambda)} }{ \partial \nu }\Big|_{\partial D \setminus \Gamma} \Bigg),
\end{align*}
where $w_g^{(\lambda)} $ is the total field of \eqref{auxprob} and the incident field $u^i_g$ is the Herglotz wave function defined by \eqref{hf}. From the boundary condition of \eqref{auxprob}, we have that

\begin{align*}
\mathcal{H} g =
\Big(  -\frac{\alpha}{2} \frac{\partial w_g^{(\lambda)} }{ \partial \nu } \Big|_\Gamma,   \frac{ \partial w_g^{(\lambda)}}{ \partial \nu}  \Big|_\Gamma, w_g^{(\lambda)}|_{\partial D \setminus \Gamma}, \frac{\partial w_g^{(\lambda)} }{ \partial \nu }\Big|_{\partial D \setminus \Gamma} \Big).
\end{align*}
\noindent
Let us define the bounded  compact linear operator $\mathcal{G} : \overline{\mathcal{R}(\mathcal{H})} \rightarrow L^2(\mathbb{S}^{m-1})$ by
\begin{align}\label{opg}
\mathcal{G}(\varphi, \psi, \varphi_c, \psi_c) := p_\infty^s,
\end{align}
where $p_\infty^s$ is the far-field pattern of the scattered field $p^s$ that satisfies \eqref{transmissionprob}. If we take $u^i := w_g^{(\lambda)}$ in \eqref{bdrycd}, then we obtain the factorization
\[
\mathcal{F} =  \mathcal{G}\mathcal{H},
\]
where the modified far-field operator $\mathcal{F}$ is defined by \eqref{modifiedffo}. We next  define $v\in V(D)$ by
\begin{align}\label{fnv}
v(x) := h(x) + \int_\Gamma \Big(  \zeta(y) \Phi(x,y) + \eta(y) \frac{ \partial  \Phi(x,y)}{\partial \nu_y}\Big) \dd S_y,
\end{align}
where $h\in V(D)$ satisfies the Helmholtz equation in $D$. Here, $\zeta$ and $\eta$ on $\Gamma$ are chosen such that
\begin{align}\label{zetaeta}
\left\{\begin{aligned}\quad
\zeta + \mathcal{K} \eta & = 2\mathcal{K} h \\
\eta +\frac{\alpha}{2} \zeta&  =  -\alpha \frac{\partial h}{\partial \nu}
 \end{aligned}\right.
\end{align}
\begin{assumption}\label{ass}
We assume that the operator ${\mathcal K}-\frac{2}{\alpha}{\mathcal I}:H^1_0(\Gamma)\to H^{-1}(\Gamma)$ is invertible, i.e  the following variational problem for some $f\in H^{-1}(\Gamma)$
\begin{align*}
\frac{1}{2}\int_\Gamma\left(\mu \nabla_{\Gamma} w\cdot \nabla_{\Gamma} \overline{\phi} + k^2 \beta w \overline{\phi} -\frac{{4}}{\alpha} w{\overline{\phi}} \right)\dd S=\int_{\Gamma}f \overline{\phi}\dd S, \qquad \mbox{for all}\; \phi \in H^1_0(\Gamma)
\end{align*}
has a solution $w\in H^1_0(\Gamma)$.
\end{assumption}
\noindent
The above assumption is always satisfied for $\Re(\beta)>0$ and $\alpha<0$. Otherwise we must exclude a discrete set of $k$ accumulating to $+\infty$. If Assumption \ref{ass} is satisfied than  (\ref{zetaeta}) has unique solution $\zeta, \eta\in V_0(\Gamma)$, and for such densities it is shown in \cite{H-thesis} that the single and double layer potentials in (\ref{fnv}) are in $V(D)$,  and hence $v\in V(D)$.  
\begin{remark}
Assume that $\lambda$ is an eigenvalue of \eqref{egprob} and $h^{\lambda}$ is the corresponding eigenfunction. It can be shown that $\zeta := {-2h^\lambda / \alpha - 2 \partial h^\lambda / \partial \nu }= \mathcal{K}h^{\lambda}$ and $\eta := h^{\lambda}$ satisfy \eqref{zetaeta}.
\end{remark}
\noindent
Let
\begin{multline}\label{solp}
p(x)  := h(x)  = \int_{\partial D} \Big( \ndy{v(y)} \Phi(x,y) - \ndy{\Phi(x,y)} v(y) \Big) \dd S_y \\
- \int_\Gamma  \Big(  \zeta(y) \Phi(x,y) + \eta(y) \ndy{\Phi(x,y)} \Big) \dd S_y, \quad x \in D,
\end{multline}
\begin{multline}\label{solps}
p^s(x) := 
 \int_{\partial D} \Big( \ndy{\Phi(x,y)} v(y) - \ndy{v(y)} \Phi(x,y)   \Big) \dd S_y \\
- \int_\Gamma  \Big(  \zeta(y) \Phi(x,y) + \eta(y) \ndy{\Phi(x,y)} \Big) \dd S_y, \quad x \in \mathbb{R}^m \setminus \overline{D}.
\end{multline}
From the jump relations for the single layer potential and the double layer potential \cite{mclean}, we show the following lemma.
\begin{lemma}\label{solpps}
$(p(x), p^s(x) ) $ defined by \eqref{solp}-\eqref{solps} is the solution of \eqref{transmissionprob} with 
\begin{align}\label{bcpps}
\varphi := \Big( v- \frac{\alpha}{2} \nd{v} \Big) \Big|_\Gamma, \: \:
\psi := \Big( \nd{v} - \mathcal{K}v \Big) \Big|_\Gamma, \:\:
\varphi_c := v|_{\partial D \setminus \Gamma}, \:\:
\psi_c := \nd{v} \Big|_{\partial D \setminus \Gamma},
\end{align}
where $v$ is defined by \eqref{fnv}.
\end{lemma}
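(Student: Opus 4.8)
The plan is to verify that the pair $(p, p^s)$ given by the layer-potential formulas \eqref{solp}--\eqref{solps} satisfies each equation in \eqref{transmissionprob} with the boundary data \eqref{bcpps}. The starting observation is that both $p$ and $p^s$ are built entirely from single- and double-layer potentials over $\partial D$ with densities $\partial v/\partial\nu$ and $v$, plus layer potentials over $\Gamma$ with densities $\zeta,\eta$; since the fundamental solution $\Phi(x,y)$ solves the Helmholtz equation away from $y$, it is immediate that $\Delta p + k^2 p = 0$ in $D$ and $\Delta p^s + k^2 p^s = 0$ in $\mathbb{R}^m\setminus\overline{D}$, and $p^s$ automatically satisfies the Sommerfeld radiation condition because each $\Phi(\cdot,y)$ does. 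So the first, second, and last lines of \eqref{transmissionprob} are free.

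The heart of the matter is the jump conditions. First I would recall the identity that the right-hand side of \eqref{solp} restricted to $x\in D$, \emph{without} the $\Gamma$-correction term, is exactly Green's representation of the function $v$ in $D$ (up to the single/double layer contributions of $v$ over $\partial D$): by Green's representation theorem applied to $v$ in $D$, one has $v(x) = \int_{\partial D}\big(\partial_{\nu_y}v\,\Phi(x,y) - \partial_{\nu_y}\Phi(x,y)\,v(y)\big)\dd S_y + (\text{volume term involving }\Delta v + k^2 v)$. Because $v$ is defined by \eqref{fnv} as $h$ plus layer potentials over $\Gamma$ with densities $\zeta,\eta$, and $h$ solves the Helmholtz equation in $D$, the volume term reproduces precisely the $\Gamma$-layer-potential term that is subtracted in \eqref{solp}; hence $p(x)=h(x)$ for $x\in D$, consistent with the notation in \eqref{solp}. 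Then I would compute the traces and normal-derivative traces of $p$ and $p^s$ on $\partial D$ and on $\Gamma$ using the standard jump relations for single- and double-layer potentials (as in \cite{mclean}): the single-layer potential is continuous across the interface while its normal derivative jumps by the density, and the double-layer potential jumps by the density while its normal derivative is continuous. Carrying this through on $\partial D\setminus\Gamma$, the $\Gamma$-potentials are smooth there, so $p - p^s$ picks up exactly the jump of the $\partial D$-double-layer term, namely $v|_{\partial D\setminus\Gamma}$, giving $\varphi_c = v|_{\partial D\setminus\Gamma}$; similarly $\partial_\nu p - \partial_\nu p^s$ picks up the jump of the $\partial D$-single-layer normal derivative, giving $\psi_c = \partial_\nu v|_{\partial D\setminus\Gamma}$.

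On $\Gamma$ one must be more careful because both the $\partial D$-potentials and the $\Gamma$-potentials contribute jumps. I would take the appropriate combinations $p - p^s$ and $p + p^s$: in the difference the $\Gamma$-single-layer terms cancel (they appear with the same sign in \eqref{solp} and \eqref{solps}) while the $\partial D$-contributions and the $\Gamma$-double-layer contributions combine, and in the sum the roles swap. Feeding these into the definitions $\varphi = (v - \tfrac{\alpha}{2}\partial_\nu v)|_\Gamma$ and $\psi = (\partial_\nu v - \mathcal{K}v)|_\Gamma$ and using the algebraic relations \eqref{zetaeta} that tie $\zeta,\eta$ to $h$ and $\mathcal{K}h$ is precisely what makes the two transmission conditions on $\Gamma$ in \eqref{transmissionprob} hold with the claimed data. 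The main obstacle I expect is exactly this bookkeeping on $\Gamma$: tracking which of the four potentials jumps, with what sign, in the $\pm$ traces, and checking that after inserting \eqref{zetaeta} everything collapses to $\varphi$ and $\psi$ as defined in \eqref{bcpps} — together with confirming that the densities $\zeta,\eta\in V_0(\Gamma)$ furnished by Assumption~\ref{ass} are regular enough that $v\in V(D)$ and the trace operations on $\Gamma$ are all well defined in the $H^{\pm1/2}(\Gamma)$ and $V^{-1}(\Gamma)$ scales, which is where the invertibility hypothesis of Assumption~\ref{ass} and the mapping properties established in \cite{H-thesis} enter.
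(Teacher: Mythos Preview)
Your approach is the same as the paper's, which simply invokes the jump relations for single- and double-layer potentials and leaves the bookkeeping to the reader. One small correction to your sketch: since $\Gamma\subset\partial D$ (not the interior of $D$), the layer potentials in \eqref{fnv} are smooth Helmholtz solutions throughout $D$, so $v$ satisfies $\Delta v+k^2 v=0$ exactly in $D$ and there is no nonzero volume term in Green's representation. The identity $p=h$ therefore comes out directly: Green's formula gives $v(x)=\int_{\partial D}\big(\partial_{\nu_y}v\,\Phi-\partial_{\nu_y}\Phi\,v\big)\dd S_y$ for $x\in D$, and subtracting the $\Gamma$-potential recovers $h$ by the definition \eqref{fnv}. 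With that fix, the rest of your plan---applying the jump relations on $\partial D\setminus\Gamma$ and on $\Gamma$, then using \eqref{zetaeta} to collapse the $\Gamma$ contributions---is exactly what is required.
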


\begin{lemma}
Assume that $\lambda \in \mathbb{C}$ is an eigenvalue of \eqref{egprob} and $h^{\lambda}$ is the corresponding eigenfunction. Let $w_v^{(\lambda), s}$ be the unique solution of
\begin{align}\label{solwv}
\left\{\begin{aligned}\quad
&\Delta w_v^{(\lambda), s} + k^2 w_v^{(\lambda), s} = 0 \quad  \text{in } \mathbb{R}^m \setminus \overline{D}, \\
& w_v^{(\lambda), s} = - v  \quad  \text{on } \Gamma, \\ 
&\nd{w_v^{(\lambda), s}} + \lambda w_v^{(\lambda), s} = - \nd{v} - \lambda v, \quad  \text{on } \partial D \setminus \Gamma, \\
     \end{aligned}\right.
\end{align}
with 
\begin{align}\label{srcw}
\lim_{r \rightarrow \infty} r^{\frac{m-1}{2}} \bigg( \frac{\partial w_v^{(\lambda), s} }{\partial r} - ik w_v^{(\lambda), s} \bigg) = 0,
\end{align}
where $v$ is defined by \eqref{fnv}.
Then, $\mathcal{G} (\varphi_{v}, \psi_{v}, \varphi_{v, c}, \psi_{v, c}) = 0$ with
\begin{align}\label{bcv}
\left\{\begin{aligned}\quad
\varphi_{v}& := \Big( -\frac{ \alpha}{2} \nd{(w_v^{(\lambda), s} + v)} + (w_v^{(\lambda), s} + v) \Big) \Big|_{\Gamma}, \\
\psi_{v}& := \Big( \nd{ (w_v^{(\lambda), s} + v)} - \mathcal{K} (w_v^{(\lambda), s} + v) \Big) \Big|_{\Gamma}, \\
\varphi_{v,c} &:= (w_v^{(\lambda), s} + v) |_{\partial D \setminus \Gamma}, \\
\psi_{v,c}& := \nd{(w_v^{(\lambda), s} + v)} \Big|_{\partial D \setminus \Gamma},
     \end{aligned}\right.
\end{align}
where the operator $\mathcal{G}$ is defined by \eqref{opg}.
\end{lemma}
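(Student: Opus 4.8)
The plan is to show that the pair $(p,p^s)$ built from the special function $v$ in \eqref{fnv}–\eqref{solps}, when combined with the auxiliary exterior solution $w_v^{(\lambda),s}$ of \eqref{solwv}, produces a scattered field $p^s$ that \emph{radiates nothing}, so that its far-field pattern — which is exactly $\mathcal{G}(\varphi_v,\psi_v,\varphi_{v,c},\psi_{v,c})$ — vanishes. The starting point is Lemma \ref{solpps}, which identifies $(p,p^s)$ as \emph{the} solution of \eqref{transmissionprob} with boundary data \eqref{bcpps} read off from $v$; by linearity and the definition \eqref{opg} of $\mathcal{G}$, the quantity $\mathcal{G}(\varphi_v,\psi_v,\varphi_{v,c},\psi_{v,c})$ is the far-field pattern of the scattered part of the solution of \eqref{transmissionprob} with data $(\varphi_v,\psi_v,\varphi_{v,c},\psi_{v,c})$. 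The key observation is that the data \eqref{bcv} are precisely the Cauchy data on $\Gamma$ and $\partial D\setminus\Gamma$ of the single combined field $w_v^{(\lambda),s}+v$, so the candidate solution of \eqref{transmissionprob} is $p := h^\lambda$ in $D$ (the eigenfunction, extended to all of $D$) and $p^s := w_v^{(\lambda),s}+v - h^\lambda$ — no, more carefully: one should exhibit an exterior field whose scattered part is radiating and whose jump data across $\partial D$ match \eqref{bcv}, and then argue this scattered part is identically zero.

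Concretely I would proceed as follows. First, recall from the Remark following \eqref{zetaeta} that for an eigenvalue $\lambda$ with eigenfunction $h^\lambda$, choosing $h := h^\lambda$ in \eqref{fnv} makes $v$ the function associated to the eigenpair, and the boundary identities \eqref{zetaeta} hold with $\zeta = \mathcal{K}h^\lambda$, $\eta = h^\lambda$. Second, I would use the boundary conditions of the eigenvalue problem \eqref{egprob}: $h^\lambda = 0$ on $\Gamma$, $\partial h^\lambda/\partial\nu = -\tfrac14(-\nabla_\Gamma\cdot\mu\nabla_\Gamma + k^2\beta + 4/\alpha)h^\lambda$ on $\Gamma$, and $\partial h^\lambda/\partial\nu + \lambda h^\lambda = 0$ on $\partial D\setminus\Gamma$. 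Third, I would verify that the function $W := w_v^{(\lambda),s} + v$ (exterior) together with $h^\lambda$ (interior) satisfies \eqref{transmissionprob} with data \eqref{bcv}: the Helmholtz equations hold ($w_v^{(\lambda),s}$ and $v$ both solve it in the exterior, $h^\lambda$ in $D$); the two jump conditions on $\Gamma$ in \eqref{transmissionprob} reduce, after substituting $p = h^\lambda$, $p^s = W$, exactly to the definitions of $\varphi_v$ and $\psi_v$ in \eqref{bcv} once one uses $h^\lambda|_\Gamma = 0$ and $\mathcal{K}$'s relation to $\partial h^\lambda/\partial\nu$; and the two conditions on $\partial D\setminus\Gamma$ become $\varphi_{v,c}, \psi_{v,c}$ after using the boundary condition \eqref{solwv} for $w_v^{(\lambda),s}$, namely $\partial(w_v^{(\lambda),s})/\partial\nu + \lambda w_v^{(\lambda),s} = -\partial v/\partial\nu - \lambda v$, i.e. $\partial W/\partial\nu + \lambda W = 0$, together with $\partial h^\lambda/\partial\nu + \lambda h^\lambda = 0$. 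Fourth, the \emph{scattered} part of this exterior field is $W - v = w_v^{(\lambda),s}$, which is by construction a radiating solution of the Helmholtz equation in $\mathbb{R}^m\setminus\overline{D}$; but its far-field pattern is what $\mathcal{G}$ returns. To conclude it vanishes I would observe that $W = w_v^{(\lambda),s}+v$ satisfies $W = 0$ on $\Gamma$ and $\partial W/\partial\nu + \lambda W = 0$ on $\partial D\setminus\Gamma$ — the homogeneous exterior impedance/Dirichlet problem — and since $w_v^{(\lambda),s}$ is radiating while $v$ is not, uniqueness for the exterior problem \eqref{auxprob}-type boundary value problem (with $\Im\lambda\ge 0$, using the standard Rellich/unique continuation argument as in Lemma \ref{densitylemma}) forces $w_v^{(\lambda),s}$ to be the field radiated by $-v$, whose far field is determined; in fact the cleaner route is: the solution of \eqref{transmissionprob} with data \eqref{bcv} has interior part $h^\lambda \in V_0(D)$ and exterior part $W$ whose \emph{scattered} component $w_v^{(\lambda),s}$ solves \eqref{solwv}, and one simply reads off $p^s_\infty = (w_v^{(\lambda),s})_\infty$; then $\mathcal{G}(\varphi_v,\ldots) = (w_v^{(\lambda),s})_\infty$, and the claim $\mathcal{G}(\cdots)=0$ must actually be that this equals the far field of $w_v^{(\lambda),s}$ — so I would re-examine whether the lemma asserts vanishing or merely computes $\mathcal{G}$; rereading, it does assert $=0$, so the genuine content is that $w_v^{(\lambda),s}+v$ restricted appropriately makes the scattered field in the transmission problem \emph{coincide} with a non-radiating field, forcing $p^s\equiv 0$ in $\mathbb{R}^m\setminus\overline D$.

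Thus the core of the argument is a uniqueness/identification step: I would show that $(h^\lambda, 0)$ is a solution of \eqref{transmissionprob} with the data \eqref{bcv} — i.e. that the exterior scattered field is identically zero — by checking that with $p^s \equiv 0$ the transmission conditions on $\Gamma$ become $\varphi_v = v - \tfrac\alpha2 \partial v/\partial\nu$ evaluated via $p = h^\lambda$, which matches \eqref{bcv} precisely because of how $v$ was constructed from $h^\lambda$ through the layer potentials and \eqref{zetaeta}; and the conditions on $\partial D\setminus\Gamma$ hold because $w_v^{(\lambda),s}+v$ there equals what $v$ alone contributes, using \eqref{solwv}. By uniqueness of the solution to \eqref{transmissionprob} (the scattering problem is well-posed, per Section \ref{forIP}), $p^s\equiv 0$, hence $p^s_\infty = 0$, i.e. $\mathcal{G}(\varphi_v,\psi_v,\varphi_{v,c},\psi_{v,c}) = 0$.

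\textbf{Main obstacle.} The delicate point is the bookkeeping on $\partial D\setminus\Gamma$: one must confirm that the specific inhomogeneous right-hand side $-\partial v/\partial\nu - \lambda v$ in \eqref{solwv} is exactly what is needed so that $w_v^{(\lambda),s}+v$ has the impedance trace that makes the transmission problem's exterior scattered part vanish, and simultaneously that the $\Gamma$-traces in \eqref{bcv} are consistent with $p = h^\lambda$ (using $h^\lambda|_\Gamma=0$ and the $\mathcal{K}$–normal-derivative relation from \eqref{egprob}). This requires carefully tracking the jump relations for the single- and double-layer potentials in \eqref{fnv} across both $\Gamma$ and $\partial D\setminus\Gamma$, invoking the mapping properties from \cite{mclean} and the fact (from \cite{H-thesis}, under Assumption \ref{ass}) that the potentials lie in $V(D)$. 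Once the traces are matched, invoking well-posedness of \eqref{transmissionprob} to force $p^s\equiv 0$ is immediate.
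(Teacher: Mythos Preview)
Your proposal reaches the correct target---that the scattered part of the solution of \eqref{transmissionprob} with data \eqref{bcv} is identically zero---but the verification strategy you outline has two concrete problems.

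First, a factual error: you write that you will use ``$h^\lambda|_\Gamma = 0$.'' That is false. In the eigenvalue problem \eqref{egprob} the condition on $\Gamma$ is the generalized impedance condition $\partial h/\partial\nu = -\tfrac14(-\nabla_\Gamma\cdot\mu\nabla_\Gamma + k^2\beta + 4/\alpha)h$; the Dirichlet condition $h=0$ is imposed only on the edge $\partial\Gamma$. Since you rely on $h^\lambda|_\Gamma = 0$ to match the $\Gamma$-conditions, that step collapses.

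Second, and more seriously, your plan to verify directly that $(h^\lambda, 0)$ solves \eqref{transmissionprob} with data \eqref{bcv} cannot be carried out with the information at hand. The data $(\varphi_v,\psi_v,\varphi_{v,c},\psi_{v,c})$ involve \emph{both} the Dirichlet and Neumann traces of $w_v^{(\lambda),s}$ on $\Gamma$ and on $\partial D\setminus\Gamma$. But \eqref{solwv} prescribes only the Dirichlet trace on $\Gamma$ and the \emph{impedance combination} $\partial w_v^{(\lambda),s}/\partial\nu + \lambda w_v^{(\lambda),s}$ on $\partial D\setminus\Gamma$. You therefore cannot evaluate $\varphi_{v,c}$ and $\psi_{v,c}$ separately, nor $\partial w_v^{(\lambda),s}/\partial\nu$ on $\Gamma$, which you would need to check the four transmission conditions against $(h^\lambda,0)$. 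Knowing that $h^\lambda$ and $w_v^{(\lambda),s}+v$ satisfy the same impedance relation on $\partial D\setminus\Gamma$ is not enough to conclude their Cauchy data agree there.

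The paper sidesteps this by reversing the logic. Starting from Lemma \ref{solpps}, it already has the solution $(p,p^s)$ of \eqref{transmissionprob} with data \eqref{bcpps} built from $v$; then it \emph{defines} $p_v:=p$ and $p_v^s:=p^s-w_v^{(\lambda),s}$ and observes (by linearity) that $(p_v,p_v^s)$ solves \eqref{transmissionprob} with the shifted data \eqref{bcv}. Now one only has to check that $p_v^s$ satisfies the homogeneous exterior mixed Dirichlet/impedance problem---and that verification needs exactly the boundary information that \emph{is} available: the Dirichlet trace of $w_v^{(\lambda),s}$ on $\Gamma$ and its impedance trace on $\partial D\setminus\Gamma$, together with the jump relations applied to the layer-potential representation \eqref{solps} of $p^s$. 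Uniqueness for that exterior problem then gives $p_v^s\equiv 0$. The key idea you are missing is to invoke uniqueness of the \emph{exterior} mixed boundary value problem (two conditions on $\partial D$) rather than uniqueness of the full transmission problem (four conditions), so that the partial boundary knowledge of $w_v^{(\lambda),s}$ suffices.
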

\begin{proof}
By the definition  $\mathcal{G} (\varphi_{v}, \psi_{v}, \varphi_{v, c}, \psi_{v, c}) = p_{v, \infty}$ where $p_{v, \infty}$ is the far-field pattern of the scattered field $p_v^s$ and $(p_v, p_v^s)$ is the solution of \eqref{transmissionprob} with \eqref{bcv}.
From Lemma \ref{solpps}, $(p, p^s)$ defined by \eqref{solp}-\eqref{solps} is the solution of  \eqref{transmissionprob} with \eqref{bcpps}. Then, $p_v := p$ and $p_v^s := p^s -w_v^{(\lambda), s}$ are well-defined. From the jump relations for the single and double layer potentials and the boundary conditions \eqref{solwv}, we have that
\[
\Delta p_v^s + k^2 p_v^s = 0 \quad  \text{in } \mathbb{R}^m \setminus \overline{D}, \quad
p_v^s = 0 \quad  \text{on } \Gamma, \quad
\nd{p_v^s } + \lambda p_v^s  = 0 \quad  \text{on } \partial D \setminus \Gamma.
\]
From the uniqueness of the exterior mixed boundary value problem \cite{CakoniColtonDavid14}, $p_v^s$ must be zero in $\mathbb{R}^m \setminus \overline{D}$. Thus, we have shown that $\mathcal{G} (\varphi_{v}, \psi_{v}, \varphi_{v, 1}, \psi_{v, c}) = 0$.
\end{proof}

\begin{lemma}\label{phiinrange}
Assume that $\lambda \in \mathbb{C}$ is not an eigenvalue of \eqref{egprob}. Let $\Phi_\infty(\cdot, z)$ be the far-field pattern of the fundamental solution $\Phi(\cdot, z)$. Then, $\Phi_\infty(\cdot, z) \in \mathrm{R}(\mathcal{G})$ for any $z \in D$, where
 $\mathrm{R}(\mathcal{G})$ is the range of the operator $\mathcal{G}$.
\end{lemma}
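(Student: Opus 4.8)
The plan is to realize $\Phi_\infty(\cdot,z)$ as the image under $\mathcal{G}$ of an explicitly constructed admissible boundary datum. By definition $\mathcal{G}$ sends a datum $(\varphi,\psi,\varphi_c,\psi_c)\in\overline{\mathcal{R}(\mathcal{H})}$ to the far-field pattern of the scattered field $p^s$ of \eqref{transmissionprob}; since two radiating Helmholtz solutions in $\mathbb{R}^m\setminus\overline{D}$ with the same far field coincide there (Rellich's lemma), it suffices to find $(\varphi,\psi,\varphi_c,\psi_c)\in\overline{\mathcal{R}(\mathcal{H})}$ for which the corresponding solution of \eqref{transmissionprob} has $p^s=\Phi(\cdot,z)$ in $\mathbb{R}^m\setminus\overline{D}$. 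Accordingly, I would fix the exterior part once and for all as $p^s:=\Phi(\cdot,z)|_{\mathbb{R}^m\setminus\overline{D}}$, which is radiating, solves the Helmholtz equation in $\mathbb{R}^m\setminus\overline{D}$ (because $z\in D$), and has far field exactly $\Phi_\infty(\cdot,z)$.

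It then remains to produce the interior field $p\in V_0(D)$ so that the Cauchy datum induced by $(p,\Phi(\cdot,z))$ through the transmission relations of \eqref{transmissionprob} lies in $\overline{\mathcal{R}(\mathcal{H})}$. Every element of $\mathcal{R}(\mathcal{H})$ arises from a field $w_g^{(\lambda)}$ solving \eqref{auxprob}, hence vanishing on $\Gamma$ and satisfying $\partial w_g^{(\lambda)}/\partial\nu+\lambda w_g^{(\lambda)}=0$ on $\partial D\setminus\Gamma$; consequently its datum obeys $\varphi+\tfrac{\alpha}{2}\psi=0$ on $\Gamma$ and $\psi_c+\lambda\varphi_c=0$ on $\partial D\setminus\Gamma$, and these linear relations persist in the closure. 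Imposing them on the datum of $(p,\Phi(\cdot,z))$ and simplifying (using $\mathcal{K}w=\tfrac12(-\nabla_\Gamma\cdot\mu\nabla_\Gamma w+k^2\beta w)$) forces $p$ to solve the \emph{inhomogeneous} version of the eigenvalue problem \eqref{egprob},
\begin{align*}
&\Delta p+k^2 p=0\ \text{in}\ D,\qquad \frac{\partial p}{\partial\nu}+\lambda p=G_c\ \text{on}\ \partial D\setminus\Gamma,\qquad p=0\ \text{on}\ \partial\Gamma,\\
&\frac{\partial p}{\partial\nu}+\frac14\Big(-\nabla_\Gamma\cdot\mu\nabla_\Gamma+k^2\beta+\frac4\alpha\Big)p=G_\Gamma\ \text{on}\ \Gamma,
\end{align*}
where $G_c:=\big(\partial\Phi(\cdot,z)/\partial\nu+\lambda\Phi(\cdot,z)\big)\big|_{\partial D\setminus\Gamma}$ and $G_\Gamma$ is the corresponding surface trace built from $\Phi(\cdot,z)|_\Gamma$. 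Its variational form is governed by the operator $\mathbb{A}+\mathbb{B}+\lambda\mathbb{K}$ of Section \ref{analEP}, which is Fredholm of index zero ($\mathbb{A}$ coercive and invertible, $\mathbb{B},\mathbb{K}$ compact); since $z\in D$ the data $G_\Gamma,G_c$ are smooth and lie in the relevant dual spaces, so the Fredholm alternative together with the hypothesis that $\lambda$ is not an eigenvalue of \eqref{egprob} yields a unique solution $p=p_z\in V_0(D)$.

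Finally, by uniqueness for \eqref{transmissionprob} the pair $(p_z,\Phi(\cdot,z)|_{\mathbb{R}^m\setminus\overline{D}})$ is the solution associated to its induced datum $d_z=(\varphi_z,\psi_z,\varphi_{z,c},\psi_{z,c})$, which by construction satisfies the two structural relations characterizing $\overline{\mathcal{R}(\mathcal{H})}$. To upgrade this to $d_z\in\overline{\mathcal{R}(\mathcal{H})}$ I would invoke the density argument of Lemma~3 in \cite{Screens}, adapted to the present operator $\mathcal{H}$: the Cauchy data generated by Herglotz incident fields are dense among the admissible data (equivalently $d_z$ annihilates $\ker\mathcal{H}^{*}$), again using that $\lambda$ is not an eigenvalue. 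Then $\mathcal{G}d_z=p^s_\infty=\Phi_\infty(\cdot,z)$, which is the assertion. The delicate point — the only one beyond routine trace manipulations and Rellich's lemma — is precisely this density step: showing the constructed Cauchy data truly belongs to $\overline{\mathcal{R}(\mathcal{H})}$ and not merely to the larger set cut out by the two structural relations. This is exactly where both the non-eigenvalue assumption (already needed to solve the interior problem for $p_z$) and the Fredholm structure from Section \ref{analEP} (needed to turn injectivity into invertibility) are essential; the remaining steps are straightforward.
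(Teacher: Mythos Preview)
Your approach is essentially the paper's: you both fix the exterior component as $\Phi(\cdot,z)$ and solve the inhomogeneous analogue of \eqref{egprob} for the interior field using Fredholm plus the non-eigenvalue hypothesis; your $p_z$ is exactly the paper's $h_z$ (the boundary condition $h_z+\alpha\,\partial_\nu h_z+\tfrac{\alpha}{2}\mathcal{K}h_z=\Phi-\tfrac{\alpha}{2}\mathcal{K}\Phi$ on $\Gamma$ rewrites as your generalized impedance condition after dividing by $\alpha$).

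The one substantive difference is how the resulting datum $d_z$ is placed in $\overline{\mathcal{R}(\mathcal{H})}$. You propose an abstract density argument---show $d_z$ satisfies the two ``structural relations'' $\varphi+\tfrac{\alpha}{2}\psi=0$, $\psi_c+\lambda\varphi_c=0$ and then claim these characterize the closure. The paper instead does this constructively: from $h_z$ it builds $v_z$ via the layer-potential representation \eqref{fnv}--\eqref{zetaeta} (this is exactly where Assumption~\ref{ass} is used), and then writes the datum in the form \eqref{bcv} with $v=v_z$. Data of the form \eqref{bcv} are manifestly limits of $\mathcal{H}g_n$ once one approximates $v_z$ by Herglotz wave functions in $V(D)$, so membership in $\overline{\mathcal{R}(\mathcal{H})}$ follows without any characterization of the closure. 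Your structural relations are necessary but not obviously sufficient for membership in $\overline{\mathcal{R}(\mathcal{H})}$, so your route would still require proving a surjectivity/density statement that the paper's explicit construction sidesteps. In short: same core idea, but the paper trades your abstract closure argument for a concrete potential-theoretic representation, at the cost of needing Assumption~\ref{ass}.
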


\begin{proof}
Let $z \in D$ and $h_z \in V_0(D)$ be the unique solution of
\begin{align*}
\left\{\begin{aligned}\quad
&\Delta h_z + k^2 h_z = 0 \quad  \text{in } D, \\
&h_z + \alpha  \frac{\partial h_z}{\partial \nu} +\frac{\alpha}{2} \mathcal{K}h_z = \Phi(\cdot, z) -  \frac{\alpha}{2} \mathcal{K}\Phi(\cdot, z)   \quad  \text{on }\Gamma, \\
 & \frac{\partial h_z}{\partial \nu} + \lambda h_z = \nd{\Phi(\cdot, z)} + \lambda \Phi(\cdot, z) \quad  \text{on } \partial D \setminus \Gamma.
     \end{aligned}\right.
\end{align*}
and define $v_z$ by
\[
v_z(x) := h_z(x) + \int_\Gamma \Big( \zeta(y) \Phi(x, y) + \eta(y)\ndy{\Phi(x,y)} \Big) \dd S_y
\]
with $\zeta$ and $\eta$ given by  \eqref{zetaeta}. Then, $v_z$ satisfies the Helmholtz equation in $D$. 
\noindent 
Now, consider $(\varphi_{z}, \psi_{z}, \varphi_{z, c}, \psi_{z, c})$ defined by \eqref{bcv} with $w_v^{(\lambda), s} := w_z^{(\lambda), s}$ and $v := v_z$ where $w_z^{(\lambda), s}$ is the solution of \eqref{solwv}-\eqref{srcw}. From Lemma \ref{solpps}, $(p, p^s)$ defined by \eqref{solp}-\eqref{solps}  with $h:= h_z$ and $v := v_z$ is the solution of \eqref{transmissionprob} with the corresponding  \eqref{bcpps}. Then, $p_z := p$ and $p^s_z := p^s - w_z^{(\lambda), s}$ solve \eqref{transmissionprob} with $(\varphi_{z}, \psi_{z}, \varphi_{z, c}, \psi_{z, c})$. From the boundary condition of \eqref{solwv}, since $w_z^{(\lambda), s} = p^s - p^s_z$, we have 
\[
p_z^s = p^s + v_z \: \:  \text{on} \: \Gamma \quad \text{ and } \quad \nd{p^s_z} + \lambda p^s_z = \nd{(p^s+v_z)} + \lambda (p^s+v_z)   \: \:  \text{on} \: \partial D \setminus \Gamma.
\]
Using the definitions for $v$ and $p^s$ and the jump relations for the single and the double layer potentials, we obtain 
\[
p_z^s = \Phi(\cdot, z) \: \:  \text{on} \: \Gamma \quad \text{ and } \quad \nd{p^s_z} + \lambda p^s_z = \nd{\Phi(\cdot, z)} + \lambda\Phi(\cdot, z)  \: \:  \text{on} \: \partial D \setminus \Gamma.
\]
Since $z \in D$, $\Phi(\cdot, z)$ is well defined in $\mathbb{R}^m \setminus \overline{D}$. Therefore, $p_z^s = \Phi(\cdot, z)$ in $\mathbb{R}^m \setminus \overline{D}$. Thus, $\mathcal{G}(\varphi_{z}, \psi_{z}, \varphi_{z, c}, \psi_{z, c}) = \Phi_\infty(\cdot, z)$.  
\end{proof}

\begin{lemma}\label{nowheredense}
Assume that $\lambda \in \mathbb{C}$ is an eigenvalue of \eqref{egprob}. Then, the set 
$$S := \{ z \in D : \Phi_\infty(\cdot, z) \in \mathrm{R}(\mathcal{G}) \}$$
 is nowhere dense in $D$.
\end{lemma}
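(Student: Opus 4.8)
The plan is to argue by contradiction: if $S$ is not nowhere dense I will force a nontrivial eigenfunction of \eqref{egprob} to vanish. So suppose $\overline{S}$ contains an open ball $B\subset D$, hence $S\cap B$ is dense in $B$, and fix a nonzero eigenfunction $h^\lambda\in V_0(D)$ of \eqref{egprob}. For each $z\in S\cap B$ I choose $(\varphi_z,\psi_z,\varphi_{z,c},\psi_{z,c})\in\overline{\mathcal{R}(\mathcal{H})}$ with $\mathcal{G}(\varphi_z,\psi_z,\varphi_{z,c},\psi_{z,c})=\Phi_\infty(\cdot,z)$, and let $(p_z,p_z^s)$ be the corresponding solution of \eqref{transmissionprob}; by Rellich's lemma $p_z^s=\Phi(\cdot,z)$ in $\mathbb{R}^m\setminus\overline{D}$.

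The first step is to identify the interior field $p_z$. Substituting $p_z^s=\Phi(\cdot,z)$ into the transmission conditions of \eqref{transmissionprob} and using the structural relations that data in $\overline{\mathcal{R}(\mathcal{H})}$ inherit from the auxiliary solutions (recall $w_g^{(\lambda)}=0$ on $\Gamma$ and $\partial w_g^{(\lambda)}/\partial\nu+\lambda w_g^{(\lambda)}=0$ on $\partial D\setminus\Gamma$, which give $\varphi=-\tfrac\alpha2\psi$ on $\Gamma$ and $\psi_c=-\lambda\varphi_c$ on $\partial D\setminus\Gamma$; cf.\ \cite{H-thesis}), one eliminates the traces of $\Phi(\cdot,z)$ and finds that $p_z\in V_0(D)$ solves the Helmholtz equation in $D$ together with the inhomogeneous version of the boundary conditions in \eqref{egprob}, namely
\[
\frac{\partial p_z}{\partial\nu}+\tfrac14\Big(-\nabla_\Gamma\cdot\mu\nabla_\Gamma+k^2\beta+\tfrac4\alpha\Big)p_z=\tfrac14\Big(\tfrac4\alpha+\nabla_\Gamma\cdot\mu\nabla_\Gamma-k^2\beta\Big)\Phi(\cdot,z)\quad\text{on }\Gamma,
\]
and $\partial p_z/\partial\nu+\lambda p_z=\partial\Phi(\cdot,z)/\partial\nu+\lambda\Phi(\cdot,z)$ on $\partial D\setminus\Gamma$, with $p_z=0$ on $\partial\Gamma$; the right-hand sides are smooth because $z\in D$.

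Next I would apply Green's second identity to $h^\lambda$ and $p_z$ in $D$, substitute the boundary conditions of \eqref{egprob} for $h^\lambda$ and the inhomogeneous ones above for $p_z$, and use the symmetry of the surface operator $\tfrac14(-\nabla_\Gamma\cdot\mu\nabla_\Gamma+k^2\beta)+\tfrac1\alpha$ together with the vanishing of $h^\lambda$ and $p_z$ on $\partial\Gamma$ (so the integration by parts on $\Gamma$ produces no edge term). The $p_z$-terms cancel, leaving an identity in which $z$ enters only through $\Phi(\cdot,z)$: the function
\[
W(z):=\int_\Gamma \tfrac14\Big(\tfrac4\alpha+\nabla_\Gamma\cdot\mu\nabla_\Gamma-k^2\beta\Big)\Phi(\cdot,z)\,h^\lambda\,\dd S+\int_{\partial D\setminus\Gamma}h^\lambda\Big(\frac{\partial\Phi(\cdot,z)}{\partial\nu}+\lambda\Phi(\cdot,z)\Big)\dd S
\]
satisfies $W(z)=0$ for all $z\in S\cap B$ (equivalently, this is the Fredholm solvability condition for the inhomogeneous problem above, whose homogeneous counterpart \eqref{egprob} has $h^\lambda$ in its kernel). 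Since $\Phi(x,\cdot)$ is real-analytic on $D$ for each $x\in\partial D$ and $h^\lambda$ is fixed, $W$ is real-analytic on $D$; vanishing on $S\cap B$, which is dense in $B$, it vanishes on $B$ by continuity, and hence, $D$ being connected, $W\equiv 0$ on all of $D$.

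It remains to turn $W\equiv0$ in $D$ into a contradiction. Moving the surface operator onto $h^\lambda$, $W$ is a combined single/double-layer potential on $\partial D$: the double-layer part is carried by $\partial D\setminus\Gamma$ with density $h^\lambda$, and the single-layer part has density $\lambda h^\lambda$ on $\partial D\setminus\Gamma$ (plus a contribution on $\Gamma$ built from $\alpha^{-1}h^\lambda$ and the surface operator applied to $h^\lambda$). Thus $W$ solves the Helmholtz equation in $\mathbb{R}^m\setminus\partial D$, is radiating outside $\overline D$, and, since $W=0$ in $D$, the jump relations for the layer potentials identify the exterior trace $W^+$ as a radiating solution of the homogeneous exterior mixed problem $\Delta W^++k^2W^+=0$ in $\mathbb{R}^m\setminus\overline{D}$, $W^+=0$ on $\Gamma$, $\partial W^+/\partial\nu+\lambda W^+=0$ on $\partial D\setminus\Gamma$, while the double-layer density gives $W^+=h^\lambda$ on $\partial D\setminus\Gamma$. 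Because $\Im(\lambda)\ge0$ for every eigenvalue, this exterior problem has only the trivial solution (the Rellich/radiation estimate underlying the well-posedness of \eqref{auxprob}, cf.\ \cite{CakoniColtonDavid14}), so $W^+\equiv0$; hence $h^\lambda=0$ and $\partial h^\lambda/\partial\nu=-\lambda h^\lambda=0$ on $\partial D\setminus\Gamma$, and Holmgren's theorem with unique continuation forces $h^\lambda\equiv0$ in $D$, contradicting $h^\lambda\neq0$. I expect the real obstacle to be the first step — verifying that the structural relations for $\overline{\mathcal{R}(\mathcal{H})}$ (in particular $\varphi=-\tfrac\alpha2\psi$ on $\Gamma$, which must persist in the closure even though $\alpha$ is only $L^\infty(\Gamma)$, and which is intertwined with the edge regularity of the auxiliary traces near $\partial\Gamma$) genuinely make $p_z$ solve the clean inhomogeneous problem; the analytic continuation of $W$, the layer-potential bookkeeping, and the exterior uniqueness are then routine.
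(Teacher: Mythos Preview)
Your argument is correct and tracks the paper's proof through the first five steps: the contradiction setup, Rellich's lemma giving $p_z^s=\Phi(\cdot,z)$, the reduction of $p_z$ to an inhomogeneous version of \eqref{egprob} (your boundary condition on $\Gamma$ is just the paper's \eqref{probpz} divided by $\alpha$), the Fredholm solvability condition $W(z)=0$, and its extension to all of $D$. The divergence is only in how you extract the contradiction from $W\equiv0$. The paper recognizes $W$ (after using Green's representation for the $\partial D$ integral) as exactly the function $v^{(\lambda)}$ of the form \eqref{fnv} with $h=\overline{p}_z^{(\lambda)}$, $\eta=\overline{p}_z^{(\lambda)}$, $\zeta=\mathcal{K}\overline{p}_z^{(\lambda)}$; since $v^{(\lambda)}=0$, Lemma~\ref{solpps} says the associated $(p,p^s)$ solves \eqref{transmissionprob} with zero data, and well-posedness of that transmission problem forces $\overline{p}_z^{(\lambda)}=0$. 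Your route---reading $W$ as a combined layer potential, using $W|_D=0$ and the jump relations to see that $W^+$ solves the homogeneous exterior mixed problem, then invoking its uniqueness and Holmgren---is a legitimate alternative that avoids the $v$-machinery of Section~\ref{sec:deff} entirely. Regarding your stated worry: the paper does not verify the relation $\varphi=-\tfrac{\alpha}{2}\psi$ in the closure directly; instead it asserts (relying on the density of Herglotz functions among $V(D)$-Helmholtz solutions, cf.\ \cite{H-thesis}) that every element of $\overline{\mathcal{R}(\mathcal{H})}$ is of the form \eqref{bcv} for some Helmholtz solution $v_z\in V(D)$, which encodes those structural relations automatically. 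This is the cleanest way to dispatch the issue you flagged.
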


\begin{proof}
Assume to the contrary that there exists a dense subset $U$ of a ball $B$ contained in $D$ such that $z \in U \cap S$. Then, $\mathcal{G}(\varphi_{z}, \psi_{z}, \varphi_{z, c}, \psi_{z, c})= \Phi_\infty(\cdot, z)$ for some $(\varphi_{z}, \psi_{z}, \varphi_{z, c}, \psi_{z, c}) \in \overline{\mathrm{R}(\mathcal{H})}$. Thus $ (\varphi_{z}, \psi_{z}, \varphi_{z, c}, \psi_{z, c})$ satisfies  \eqref{bcv} where $w_v^{(\lambda), s} := w_z^{(\lambda), s} $ solves  \eqref{solwv}-\eqref{srcw} and $v:= v_z$ for some $v_z\in V(D)$ satisfying  $\nabla v + k^2 v = 0$ in $D$. 
Let $(p_z, p_z^s)$ be the solution of \eqref{transmissionprob} with $(\varphi_{z}, \psi_{z}, \varphi_{z, c}, \psi_{z, c})$. Since the far-field patterns of $p^s_z$ and $\Phi(\cdot, z)$ coincide, from Rellich's Lemma, we obtain that $p^s_z = \Phi(\cdot, z)$ in $\mathbb{R}^m \setminus D$. Then, $p_z$ satisfies the following:
\begin{align}\label{probpz}
\left\{\begin{aligned}\quad
&\Delta p_z + k^2 p_z = 0 \quad  \text{in } D, \\
&p_z +\alpha   \frac{\partial p_z}{\partial \nu} +\frac{ \alpha}{2} \mathcal{K}p_z =  \Phi(\cdot, z)- \frac{\alpha}{2} \mathcal{K}\Phi(\cdot, z) \quad  \text{on }\Gamma, \\
 & \frac{\partial p_z}{\partial \nu} + \lambda p_z = \nd{\Phi(\cdot, z)} + \lambda \Phi(\cdot, z) \quad  \text{on } \partial D \setminus \Gamma.
     \end{aligned}\right.
\end{align}
The above problem \eqref{probpz} is solvable if and only if for any $z \in B$,
\begin{multline}\label{solvability}
\int_\Gamma \frac{1}{\alpha} \Phi(\cdot, z) \overline{p}_z^{(\lambda)} {-} \frac{1}{2} \big( \mathcal{K}\Phi(\cdot, z)   \big) \overline{p}_z^{(\lambda) }\dd S
+ \int_{\partial D \setminus \Gamma} \nd{\Phi(\cdot, z)}  \overline{p}_z^{(\lambda)}+ \lambda \Phi(\cdot, z)  \overline{p}_z^{(\lambda)} \dd S = 0,
\end{multline}
where $\overline{p}_z^{(\lambda)} \in V_0(D)$ is an eigenfunction of \eqref{egprob}. Using the boundary condition for $\overline{p}_z^{(\lambda)}$, we can rewrite \eqref{solvability} as
\begin{align}\label{pzlambda}
\int_{\partial D} \nd{\overline{p}_z^{(\lambda)}}  \Phi(\cdot, z) - \nd{\Phi(\cdot, z)} \overline{p}_z^{(\lambda)}   \dd S
+ \int_\Gamma \nd{\Phi(\cdot, z)} \overline{p}_z^{(\lambda)}  + (\mathcal{K}  \overline{p}_z^{(\lambda)}  )\Phi(\cdot, z) \dd S = 0.
\end{align}
If we consider the left-hand side of  \eqref{pzlambda} as a function of $z$, then it solves the Helmholtz equation in $D$. Therefore, \eqref{pzlambda} holds for all $z \in D$. 
From  \eqref{pzlambda} , we have that 
\[
v^{(\lambda)}(z)  := \overline{p}_z^{(\lambda)} + \int_\Gamma \nd{\Phi(\cdot, z)} \overline{p}_z^{(\lambda)}  + (\mathcal{K}  \overline{p}_z^{(\lambda)}  )\Phi(\cdot, z) \dd S = 0.
\]
Since $ \overline{p}_z^{(\lambda)} $ is the eigenfunction corresponding to $\lambda$, $v^{(\lambda)}$ satisfies \eqref{fnv} with $\zeta :=  \mathcal{K} \overline{p}_z^{(\lambda)} $ and $\eta :=  \overline{p}_z^{(\lambda)} $. Therefore, the solution $(p, p^s)$ defined by \eqref{solp}-\eqref{solps}  with $v^{(\lambda)} =0$ is zero. Thus, $ \overline{p}_z^{(\lambda)}  = 0$, which is a contradiction.
\end{proof}

\noindent
Now, we are ready to state the main theorem that provides a criteria  to determine the eigenvalues of \eqref{egprob} from  the modified far-field equation given by
\begin{align}
\mathcal{F} g (\hat{x}) = \Phi_\infty(\hat{x}, z) \quad \text{ for } \: z \in D\label{modifiedffe}
\end{align}
where the modified far-field operator $\mathcal{F}$ is defined by \eqref{modifiedffo}. 

\begin{theorem}
\begin{enumerate}[(i)]
\item
Assume that $\lambda \in \mathbb{C}$ is not an eigenvalue of \eqref{egprob}. If $z\in D$, then there exists a sequence $\{ g_n^z\}$ in $L^2(\mathbb{S}^{m-1})$ such that
\begin{align}\label{limfg}
\lim_{n \rightarrow \infty} \| \mathcal{F} g_n^z(\hat{x}) - \Phi_\infty(\hat{x}, z) \|_{L^2(\mathbb{S}^{m-1})} = 0
\end{align}
and $\|v_{g_n^z}\|_{V(D)}$ is bounded.

\item and Assumption \ref{ass} holds. Then, for any sequence $\{ g_n^z\}$ in $L^2(\mathbb{S}^{m-1})$ satisfying \eqref{limfg}, 
 $\|v_{g_n^z}\|_{V(D)}$ cannot be bounded for any $z \in D$, except for a nowhere dense set.
\end{enumerate}
\end{theorem}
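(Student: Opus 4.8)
The plan is to establish the two assertions by the generalized linear sampling argument of \cite{Screens}, using the factorization $\mathcal{F}=\mathcal{G}\mathcal{H}$ with $\mathcal{G}$ compact, Lemma \ref{phiinrange} (which gives $\Phi_\infty(\cdot,z)\in\mathrm{R}(\mathcal{G})$ for every $z\in D$ when $\lambda$ is not an eigenvalue of \eqref{egprob}) and Lemma \ref{nowheredense} (which gives that the set $S:=\{z\in D:\Phi_\infty(\cdot,z)\in\mathrm{R}(\mathcal{G})\}$ is nowhere dense when $\lambda$ is an eigenvalue).

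For part (i), I would fix $z\in D$ and take the tuple $T_z:=(\varphi_z,\psi_z,\varphi_{z,c},\psi_{z,c})$ produced in the proof of Lemma \ref{phiinrange}, which is built from $v_z$ as in \eqref{fnv}--\eqref{zetaeta} (with $h=h_z$ the solution of the interior problem there) and from $w_z^{(\lambda),s}$, and which satisfies $\mathcal{G}T_z=\Phi_\infty(\cdot,z)$. The first step is to realize $T_z$ as a limit $\mathcal{H}g_n\to T_z$: since $h_z$ solves the Helmholtz equation in $D$, the classical density of Herglotz wave functions gives kernels $g_n$ with $u^i_{g_n}\to h_z$ in the graph norm of $V(D)$ (which controls the Cauchy data of $h_z$ on $\Gamma$ and on $\partial D\setminus\Gamma$, since $\Delta h_z=-k^2h_z\in L^2$); then the well-posedness and continuous dependence on the incident field of \eqref{auxprob}, together with the solvability and continuity of \eqref{zetaeta} under Assumption \ref{ass} (see \cite{H-thesis}), force both $\mathcal{H}g_n\to T_z$ and $v_{g_n}\to v_z$ in $V(D)$. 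It then follows that $\mathcal{F}g_n=\mathcal{G}\,\mathcal{H}g_n\to\mathcal{G}T_z=\Phi_\infty(\cdot,z)$, i.e.\ \eqref{limfg}, while $\|v_{g_n}\|_{V(D)}\to\|v_z\|_{V(D)}$ remains bounded.

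For part (ii), I would let $N\subseteq D$ denote the set of $z$ admitting a sequence $\{g_n^z\}$ with \eqref{limfg} and $\|v_{g_n^z}\|_{V(D)}$ bounded, and prove $N\subseteq S$; Lemma \ref{nowheredense} then gives that $N$ is nowhere dense, which is the claim. Fix $z\in N$ and such a sequence. The Cauchy data of $v_{g_n^z}$ on $\Gamma\cup(\partial D\setminus\Gamma)$ are bounded functionals of $v_{g_n^z}\in V(D)$ (the normal trace is controlled because $v_{g_n^z}$ solves the Helmholtz equation), and under Assumption \ref{ass} they coincide with $\mathcal{H}g_n^z$ up to bounded maps, so $\{\mathcal{H}g_n^z\}$ is bounded; passing to a subsequence, $\mathcal{H}g_n^z\rightharpoonup T$ weakly, and $T\in\overline{\mathrm{R}(\mathcal{H})}$ since that subspace, being norm-closed, is weakly closed. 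Compactness of $\mathcal{G}$ then gives $\mathcal{F}g_n^z=\mathcal{G}\,\mathcal{H}g_n^z\to\mathcal{G}T$ in norm, and comparison with $\mathcal{F}g_n^z\to\Phi_\infty(\cdot,z)$ yields $\mathcal{G}T=\Phi_\infty(\cdot,z)$, hence $z\in S$.

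The main obstacle, in both parts, is the norm transfer between $\mathcal{H}g_n$ and $v_{g_n}$: in (i) one must drive $\mathcal{H}g_n$ to $T_z$ while simultaneously keeping $\|v_{g_n}\|_{V(D)}$ bounded (this, not merely density of $\mathrm{R}(\mathcal{F})$, is what the generalized characterization requires, and is where the $V(D)$ mapping properties of the single and double layer potentials over the \emph{open} surface $\Gamma$ and the invertibility in Assumption \ref{ass} enter), and in (ii) one needs the converse, that a bounded $\{v_{g_n^z}\}$ forces $\{\mathcal{H}g_n^z\}$ to be bounded. Both rest on the fine solvability theory for the coupled interior/exterior problem with the second order jump conditions developed in \cite{H-thesis}; once those estimates are granted, the weak-compactness argument, the compactness of $\mathcal{G}$, and the fact that a subset of a nowhere dense set is nowhere dense complete the proof.
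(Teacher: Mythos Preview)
Your overall strategy matches the paper's: factor $\mathcal{F}=\mathcal{G}\mathcal{H}$, invoke Lemma~\ref{phiinrange} for (i) and Lemma~\ref{nowheredense} for (ii), and pass weak limits through the compact operator $\mathcal{G}$. Part (ii) is fine and in fact slightly cleaner than the paper's version, since you argue $N\subseteq S$ directly rather than by contradiction and do not need to identify the weak limit of $\mathcal{H}g_n^z$ explicitly.

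There is, however, a slip in part (i). You approximate $h_z$ (the interior Helmholtz solution from the proof of Lemma~\ref{phiinrange}) by Herglotz wave functions $u^i_{g_n}$ and then claim $\mathcal{H}g_n\to T_z$ and $v_{g_n}\to v_z$. But $v_{g_n}$ \emph{is} the Herglotz wave function $u^i_{g_n}$, so you cannot have both $u^i_{g_n}\to h_z$ and $v_{g_n}\to v_z$ unless $h_z=v_z$, which fails whenever the layer potentials in \eqref{fnv} are nonzero. Moreover, $\mathcal{H}g_n$ is the boundary trace of the total field $w_{g_n}^{(\lambda)}=w_{g_n}^{(\lambda),s}+u^i_{g_n}$ of \eqref{auxprob}; if $u^i_{g_n}\to h_z$, continuous dependence gives convergence to the trace of $w^{(\lambda),s}_{h_z}+h_z$ (with $w^{(\lambda),s}_{h_z}$ solving \eqref{solwv} for data $h_z$), whereas $T_z$ in Lemma~\ref{phiinrange} is built from $v_z$ and from $w_z^{(\lambda),s}$ solving \eqref{solwv} for data $v_z$. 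These tuples differ in general, so your step ``$\mathcal{H}g_n\to T_z$'' does not follow. The fix is immediate: approximate $v_z$ (also a Helmholtz solution in $D$) rather than $h_z$ by Herglotz wave functions; then $v_{g_n}\to v_z$ is bounded and continuous dependence of \eqref{auxprob} yields $\mathcal{H}g_n\to T_z$ directly, with no appeal to \eqref{zetaeta} or Assumption~\ref{ass} needed at this step. This is exactly the route the paper takes.
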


\begin{proof}
(i) Assume that $\lambda \in \mathbb{C}$ is not an eigenvalue of \eqref{egprob}. From Lemma \ref{phiinrange}, for any $z \in D$, there exists $(\varphi_{z}, \psi_{z}, \varphi_{z, c}, \psi_{z, c}) \in \overline{\mathrm{R}(\mathcal{H})}$ such that $\mathcal{G}(\varphi_{z}, \psi_{z}, \varphi_{z, c}, \psi_{z, c}) = \Phi_\infty(\cdot, z)$. Thus, there exists a sequence $\{ g_n^z\}$ in $L^2(\mathbb{S}^{m-1})$ such that 
\[
{\mathcal H}{g_n^z} := 
\Bigg( \Big(w_{g_n^z}^{(\lambda)}- \frac{\alpha}{2} \frac{\partial w_{g_n^z}^{(\lambda)} }{ \partial \nu }\Big) \Big|_\Gamma,  \Big( \frac{ \partial w_{g_n^z}^{(\lambda)}}{ \partial \nu} -\mathcal{K}w_{g_n^z}^{(\lambda)}\Big) \Big|_\Gamma, w_{g_n^z}^{(\lambda)}|_{\partial D \setminus \Gamma}, \frac{\partial w_{g_n^z}^{(\lambda)} }{ \partial \nu }\Big|_{\partial D \setminus \Gamma} \Bigg)
\]
converges to $(\varphi_{z}, \psi_{z}, \varphi_{z, c}, \psi_{z, c})$ in $H^{-1/2}(\Gamma) \times V^{-1}{(\Gamma)}\times H^{1/2}(\partial D \setminus \Gamma)\times H^{-1/2}(\partial D \setminus \Gamma)$,
where $w_{g_n^z}^{(\lambda)}$ is the total field solving \eqref{auxprob} with the incident field $u^i := v_{g_n^z}$ the Herglotz wave function defined by \eqref{hf}. Using the fact that the set of Herglotz wave functions are dense in the space of solutions to the Helmholtz equation in $V(D)$ \cite{H-thesis}, we have that  $v_{g_n^z}$ converges to $v_z \in V(D)$ such that $\Delta v_z+k^2v_z=0$ in $D$. Therefore, $\|v_{g_n^z}\|_{V}$ is bounded as $n \rightarrow \infty$ and {if $W_{g_n^z} :=\mathcal{H} g_n^z$,}
\[
\lim_{n \rightarrow \infty}  \| \mathcal{F} g_n^z(\hat{x}) - \Phi_\infty(\hat{x}, z) \|_{L^2(\mathbb{S}^{m-1})} 
= \lim_{n \rightarrow \infty}  \| \mathcal{G}(W_{g_n^z}) - \Phi_\infty(\hat{x}, z) \|_{L^2(\mathbb{S}^{m-1})} =0,
\]
since $\mathcal{G}$ is continuous.

\medskip
\noindent
(ii) Suppose that $\lambda$ in an eigenvalue of \eqref{egprob}. Assume to the contrary that there exists a sequence $\{g_n^z\}$ in $L^2(\mathbb{S}^{{m-1}})$ satisfying \eqref{limfg} such that $\| v_{g_n^z}\|_{V(D)}$ is bounded for all $z$ in a dense subset $U$ of a ball $B$ contained in $D$. Then, there exists a subsequence, still denoted by $\{v_{g_n^z}\}$, that converges weakly to a solution of the Helmholtz equation $v_z \in V(D)$. Now, we  consider
\begin{align*}
\varphi_{z} & := \Big( -\frac{\alpha}{2}  \nd{(w_z^{(\lambda), s} + v_z)} + (w_z^{(\lambda), s} + v_z) \Big) \Big|_{\Gamma}, \\
\psi_{z}&  := \Big( \nd{ (w_z^{(\lambda), s} + v_z)} - \mathcal{K} (w_z^{(\lambda), s} + v_z) \Big) \Big|_{\Gamma}, \\
\varphi_{z,c} &:= (w_z^{(\lambda), s} + v_z) |_{\partial D \setminus \Gamma}, \\
\varphi_{z,c}& := \nd{(w_z^{(\lambda), s} + v_z)} \Big|_{\partial D \setminus \Gamma},
\end{align*}
where $w_z^{(\lambda), s} $ solves \eqref{solwv}-\eqref{srcw} with $v := v_z$. We have that   $\mathcal{H}g_z^n$ converges weakly to $(\varphi_{z}, \psi_{z}, \varphi_{z, c}, \psi_{z, c})$. Since $\mathcal{G}$ is compact,  we conclude that $\mathcal{G}(\mathcal{H} g_n^z)$ converges strongly to $\mathcal{G}(\varphi_{z}, \psi_{z}, \varphi_{z, c}, \psi_{z, c})$ for all $z \in U$. From \eqref{limfg}, we have that  
$$\mathcal{G}(\varphi_{z, 1}, \varphi_{z, 2}, \psi_{z, 1}, \psi_{z, 2}) = \Phi_\infty(\cdot, z) \quad  \mbox{for all} \;\; z \in U.$$ This contradicts to Lemma \ref{nowheredense} and thus this completes the proof.
\end{proof}
\section{Numerical Results}
We next present preliminary numerical results that illustrate the detection of eigenvalues from far field data, and their sensitivity to changes in parameters.  In doing this, we make several simplifying assumptions: 1) we only perform computations in $\mathbb{R}^2$, 2) we assume that the coefficients $\alpha$, $\beta$ and $\mu$
in (\ref{totalfield}) are constant, and 3) we only consider two screens that are subsets of the unit circle.  Obviously each of these limitations should be investigated for real applications. 

The results are computed in the usual way, following for example \cite{Screens}.
For each choice of coefficients and screen $\Gamma$ we generate synthetic far field data using the finite element method with quartic polynomials on a triangular mesh that is refined slightly towards the points $\partial \Gamma$.  The domain is truncated using a radial perfectly matched layer, and curved edges are approximated by quartic polynomials.  The finite element space is discontinuous across $\partial D$ and continuity across $\partial D\setminus\Gamma$ is enforced by Nitsche's method as used in symmetric interior penalty discontinuous Galerkin methods~\cite{Arnold82}.  The code is written in Python using NGSpy~\cite{Netgen} and makes critical use of the surface
differential operators implemented in that code.

In the same way, the auxiliary problem (\ref{auxprob}) is also approximated using an NGSpy code.  Finally, in order to test  the determination of eigenvalues from far field data using a discrete version of the modified far field equation
(\ref{modifiedffe}) we also solve the eigenvalue problem (\ref{variationh}) using NGSpy.

To find eigenvalues from far field data, we discretize (\ref{modifiedffe}) by Nystrom's method using $N_{\rm{}far}$ equally spaced directions on the unit circle, and collocate the resulting linear problem.  Then we add noise to the ``measured'' far field pattern $u_\infty$. In particular if the incident and measurement directions are denoted ${d}_j$, $j=1,\cdots, N_{\rm{}far}$  then then discretized modified far field operator is represented by the $N_{\rm{}far}\times N_{\rm{}far}$ matrix $A^{(\lambda)}$ given by
\[
A^{(\lambda)}_{j,\ell}=\Delta \theta (u_{\infty}(d_j;d_\ell)-h^{(\lambda)}_\infty(d_j;d_\ell)),\quad 1\leq j,\ell\leq N_{\rm{}far},
\]
where $\Delta\theta$ is the angle between adjacent directions..  Then we compute a noisy measurement matrix using
\[
A^{(\lambda),{\rm noise}}_{j,\ell}=A^{(\lambda)}_{j,\ell}(1+\epsilon_{noise}\xi_{j,\ell} ),\quad 1\leq j,\ell\leq N_{\rm{}far},
\]
where $\epsilon_{\rm{}noise}$ is a fixed parameter and $\xi_{j,\ell}$ is a uniformly distributed random number in the interval $(-1,1)$.  In our results we choose $\epsilon_{\rm{}noise}=0.01$ which gives roughly $0.3\%$ error in the relative matrix 2-norm.
Using the noisy matrix we solve the discrete modified far field equation by Tikhonov regularization using a fixed regularization parameter $\alpha_{\rm{}Tik}=10^{-7}$
for each available $\lambda$ and $N_z=10$ source points randomly located in $D$.  We then plot the averaged $\ell_2$ norm of the discrete solution $\vec{g}$ of the modified far field equation as a function of $\lambda$.  We expect peaks in the the average norm of $\vec{g}$ to correspond to eigenvalues of $D$.

\begin{figure}
\centering
\resizebox{0.58\textwidth}{!}{\includegraphics{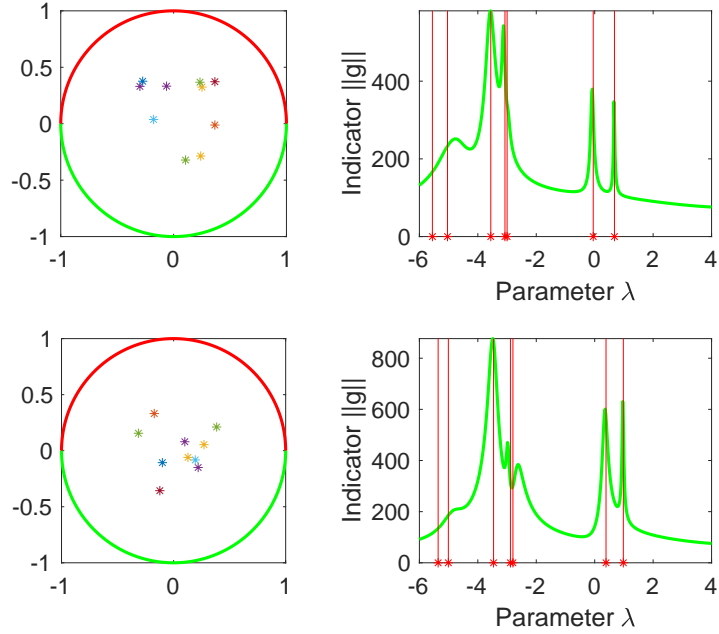}}
\caption{In the left column we show the scatterer $\Gamma$ (red curve) and the remainder of $\partial D$ as a green curve.  Asterisks show the position of the random source points  $z$ in $D$.  In the right column we show the average $\ell_2$ norm of the regularized solution of
the modified far  field equation against the eigenparameter $\lambda$. The vertical lines mark the position of the true eigenvalues
found by solving the interior eigenvalue problem. Top row: Dirichlet end condition. Bottom row: Neumann end condition. }
\label{g1_circ}
\end{figure}

In our numerical experiments we have taken $N_{\rm{}far}=120$. The minimum number of incident directions needed depends on the wave number $k$, and we have not investigated this aspect of the problem.  The two screens that we consider are the upper half of a unit circle, and a quarter of a unit circle (see Figs~\ref{g1_circ} and \ref{g2_circ} left panels).  In both cases $D$ is the unit disc, and we choose, $\alpha=-2$ and $\beta=\mu=2$ and wave number $k=4$.  For the upper half circle case, results are shown in Fig.~\ref{g1_circ} for the case of a Dirichlet boundary condition on $\partial \Gamma$. As we have mentioned in Remark~\ref{rem1}, other choices of end condition on $\partial \Gamma$ are possible and in
the lower panels of Fig.~\ref{g1_circ} we assume a homogeneous Neumann condition.  Clearly, in both cases, we can identify the largest two eigenvalues, and also information about the next three (two are close together). The corresponding result for the quarter circle scatterer (with the same parameters) is shown in Fig.~\ref{g2_circ}. From now on, we shall only present results for the Dirichlet end condition analyzed in this paper.

\begin{figure}
\centering
\resizebox{0.58\textwidth}{!}{\includegraphics{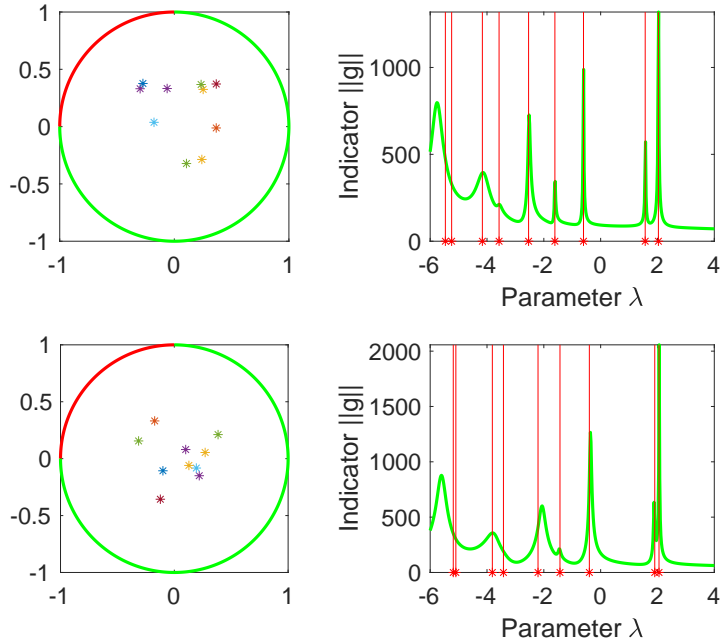}}
\caption{The layout of this figure is the same as in Fig.~\ref{g1_circ} except that the scatter is now the quarter circle shown in the left column. The same parameter values are used. Top row: Dirichlet end condition. Bottom row: Neumann end condition.}
\label{g2_circ}
\end{figure}

 In addition we also present the detection of eigenvalues when $\mu=0.2$, $\beta=1$ and $\alpha=-0.2$ to indicate that eigenvalues can be detected for quite different choices of the parameters.  These are shown in Fig.~\ref{g12_1_circ}.  It is 
apparent that for either scatterer and either choice of parameters we can detect roughly the largest 3-4 eigenvalues depending on
the end condition.  

\begin{figure}
\centering
\resizebox{0.59\textwidth}{!}{\includegraphics{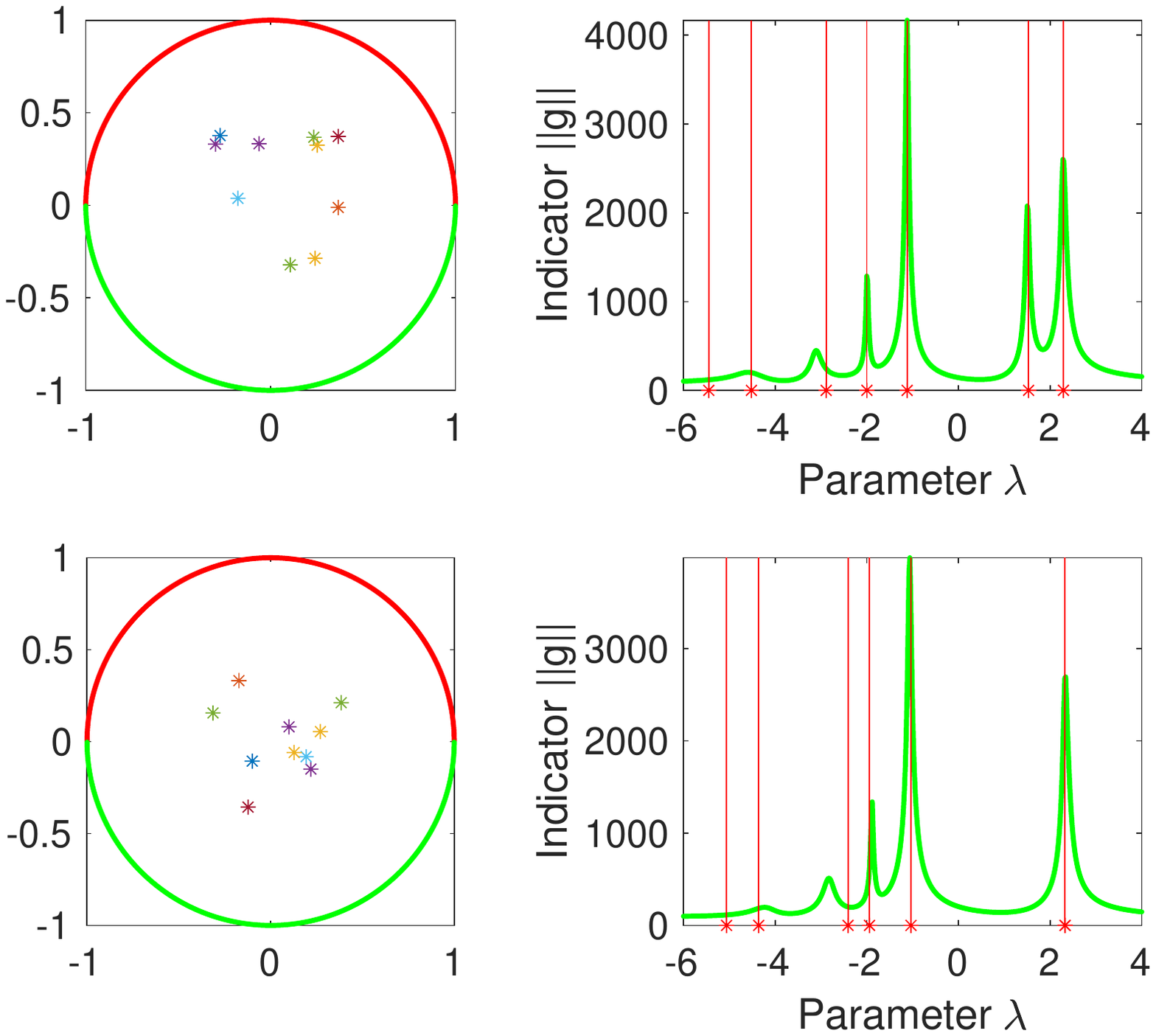}}\\
\resizebox{0.59\textwidth}{!}{\includegraphics{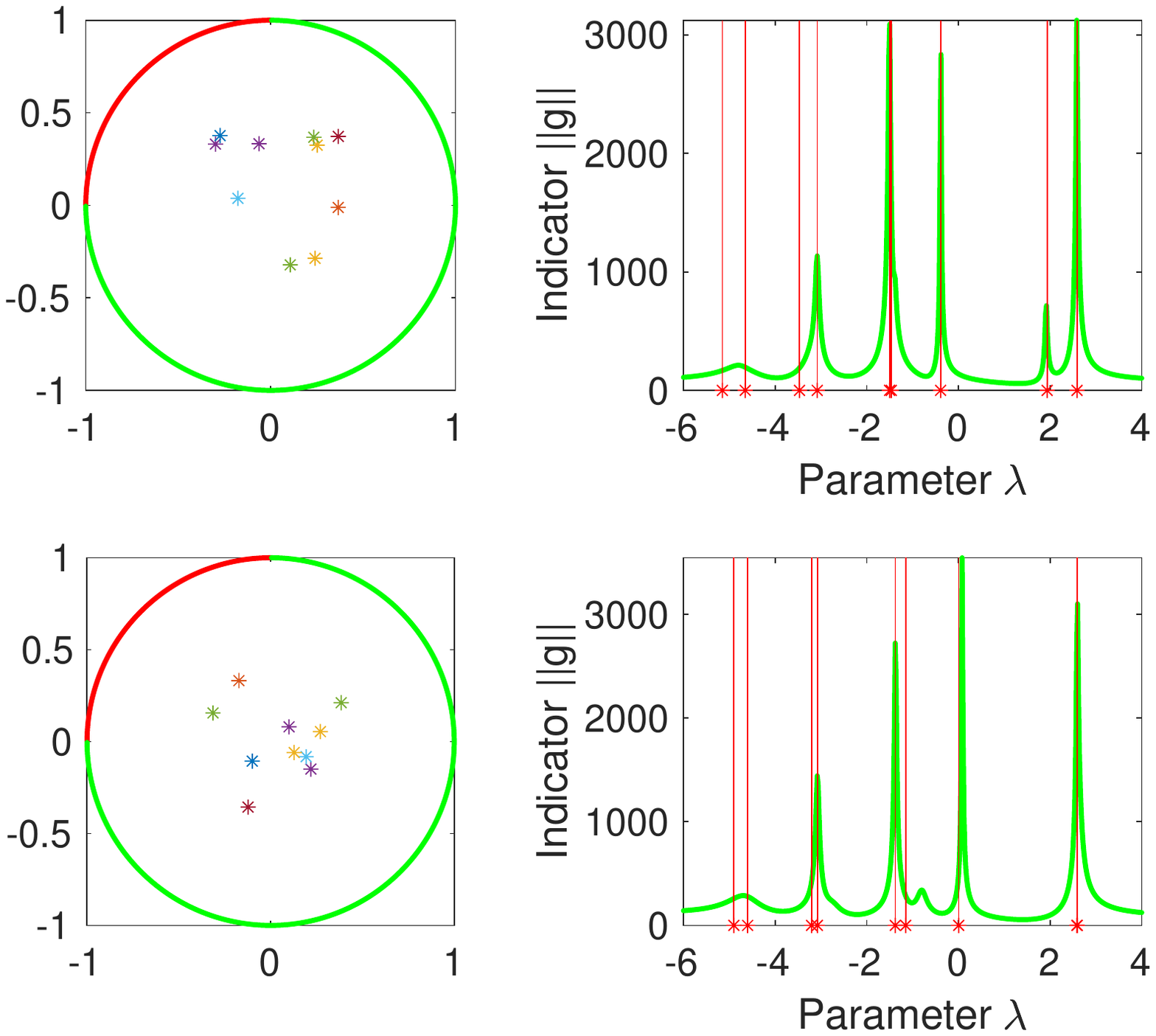}}
\caption{Here we show the detection of eigenvalues for the half and quarter circle scatterers with Dirichlet end conditions and parameters 
given by $\mu=0.2$, $\beta=1$ and $\alpha=-0.2$. See Fig.~\ref{g1_circ} for a description of the symbols used. Top row: Half circle scatterer. Bottom: Quarter circle scatterer.}
\label{g12_1_circ}
\end{figure}

The choice of the domain $D$ is, in theory, arbitrary provided it is sufficiently smooth and $\Gamma\subset \partial D$.  Of course the choice of $D$ changes the eigenvalues and eigenvectors.  For example, in Fig.~\ref{g34} we show  results of detecting eigenvalues using the parameters $\mu=\beta=2$ and  $\alpha=-2$ when $D$ is obtained by joining the end points of $\Gamma$ by
a straight line. In both cases, fewer eigenvalues can be detected and in the case of the hemisphere one eigenvalue is missed when compared to the predictions in Fig.~\ref{g1_circ}.  We have no explanation for the relatively poor performance in this case, but note that the 
solution of the auxiliary problem will have a stronger singularity at $\partial \Gamma$ compared to the case when $D$ is a circle.  We
therefore designed two new domains $D$ where arcs of circles are used to more smoothly extend $\Gamma$ to obtain $D$.  Results for these rounded domains are shown in Fig.~\ref{g67}. Eigenvalues for the hemisphere are now accurately predicted, and two eigenvalues are determined also for the quarter circle.

\begin{figure}
\centering
\resizebox{0.6\textwidth}{!}{\includegraphics{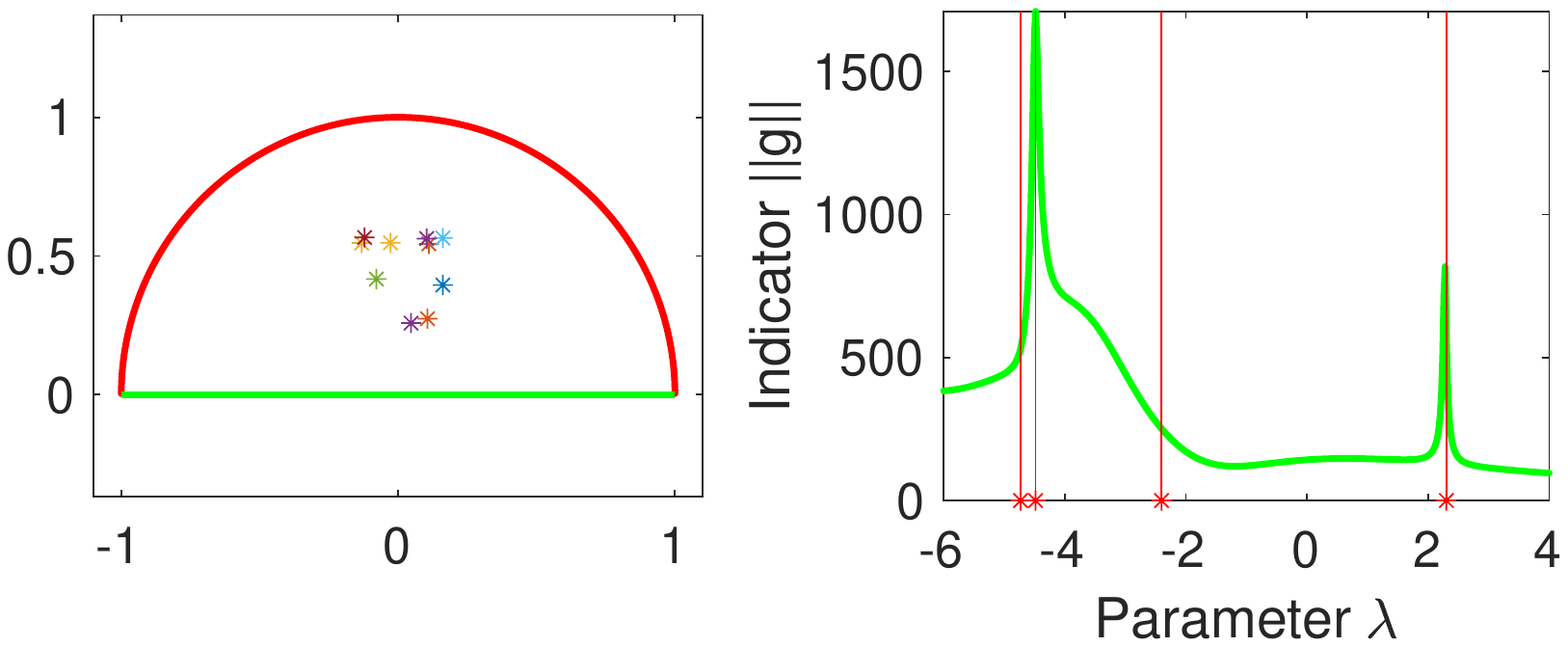}}\\
\resizebox{0.6\textwidth}{!}{\includegraphics{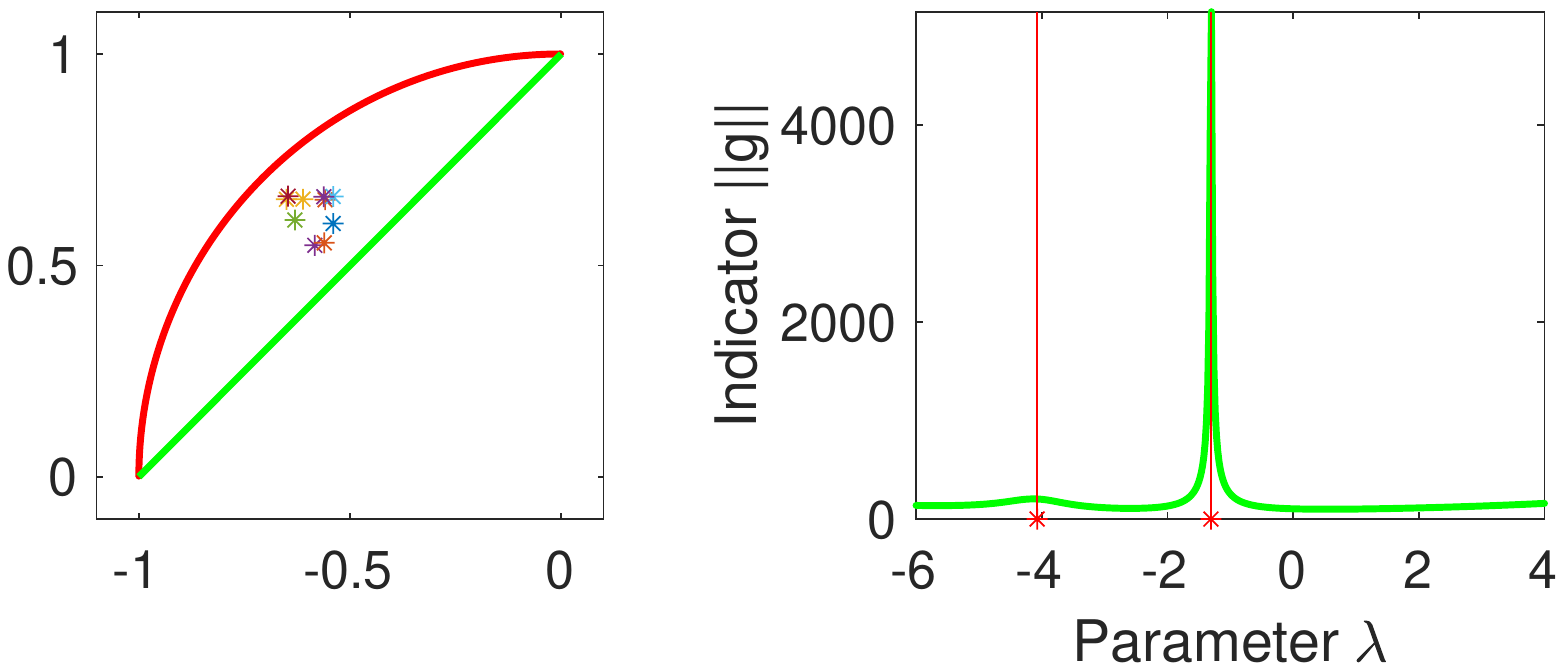}}
\caption{Here we show the detection of eigenvalues for the half and quarter circle scatterers with Dirichlet end conditions and parameters 
given by $\mu=0.2=\beta=2$ and $\alpha=-2$. The domain $D$ is now obtained by joining the end points of $\Gamma$ by a straight line.
See Fig.~\ref{g1_circ} for a description of the symbols used. Top row: Half circle scatterer. Bottom: Quarter circle scatterer.}
\label{g34}
\end{figure}

\begin{figure}
\centering
\resizebox{0.6\textwidth}{!}{\includegraphics{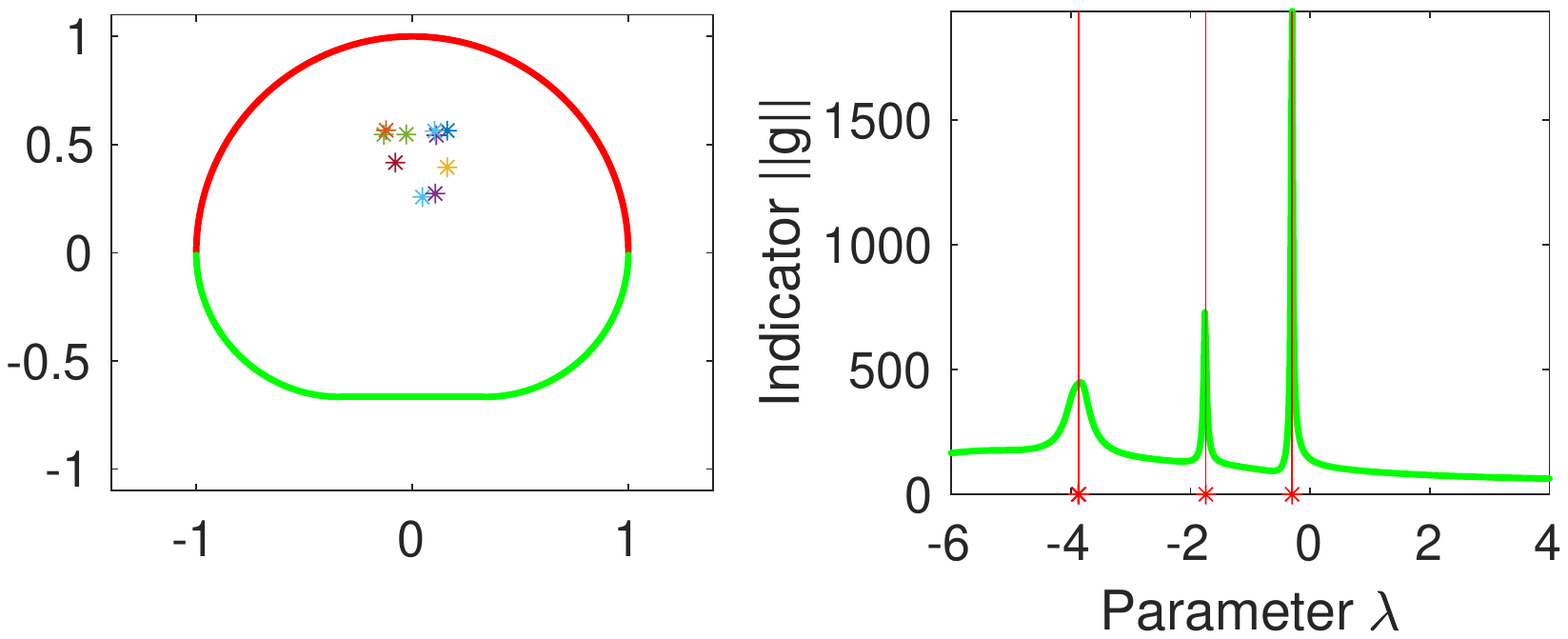}}\\
\resizebox{0.6\textwidth}{!}{\includegraphics{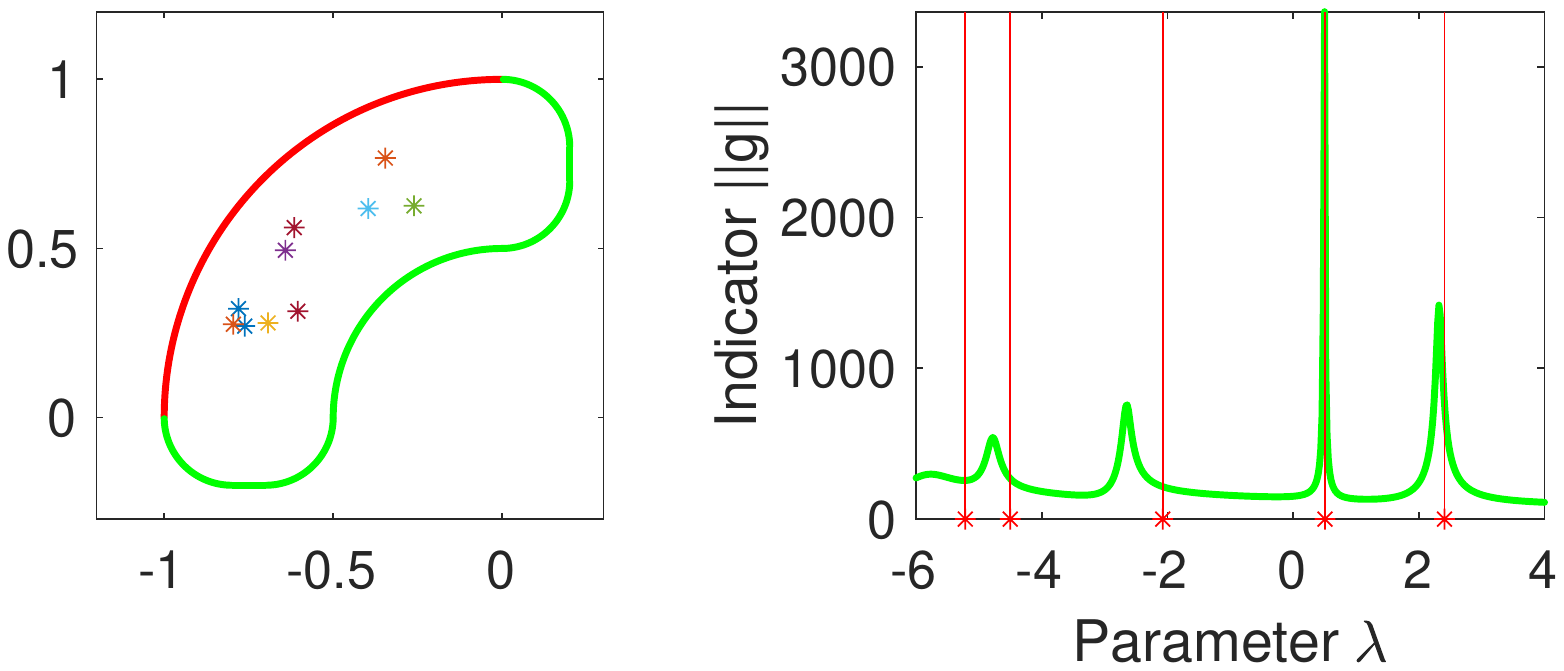}}
\caption{An example of  non-circular domains $D$ containing $\Gamma$.  These domains are smoother than those in Fig.~\ref{g34}
and allow the approximation of more eigenvalues (the same parameters are used). Top row: Half circle scatterer. Bottom: Quarter circle scatterer. }
\label{g67}
\end{figure}


Using the eigenvalue solver it is possible to examine the changes in the predicted eigenvalues of the modified far field operator as the
parameters in the surface model change.  For example, for the domain shown in Fig.~\ref{g1_circ} (a half circle scatterer with $D$ a circle),
we have examined how the first five eigenvalues in magnitude depend on $\alpha$, $\beta$ and $\mu$ in Fig.~\ref{sens_g1}.  One-by-one the parameters $\alpha$, $\beta$ and $\mu$ are varied from their base value $\alpha=-2$ and $\beta=\mu=2$.  For the parameter $\alpha$ we see that the eigenvalues sensitive to changes only for $\alpha$ greater than approximately minus one, whereas for the
other parameters the eigenvalues change throughout the range of the parameters considered. 

\begin{figure}
\centering
\resizebox{0.3\textwidth}{!}{\includegraphics{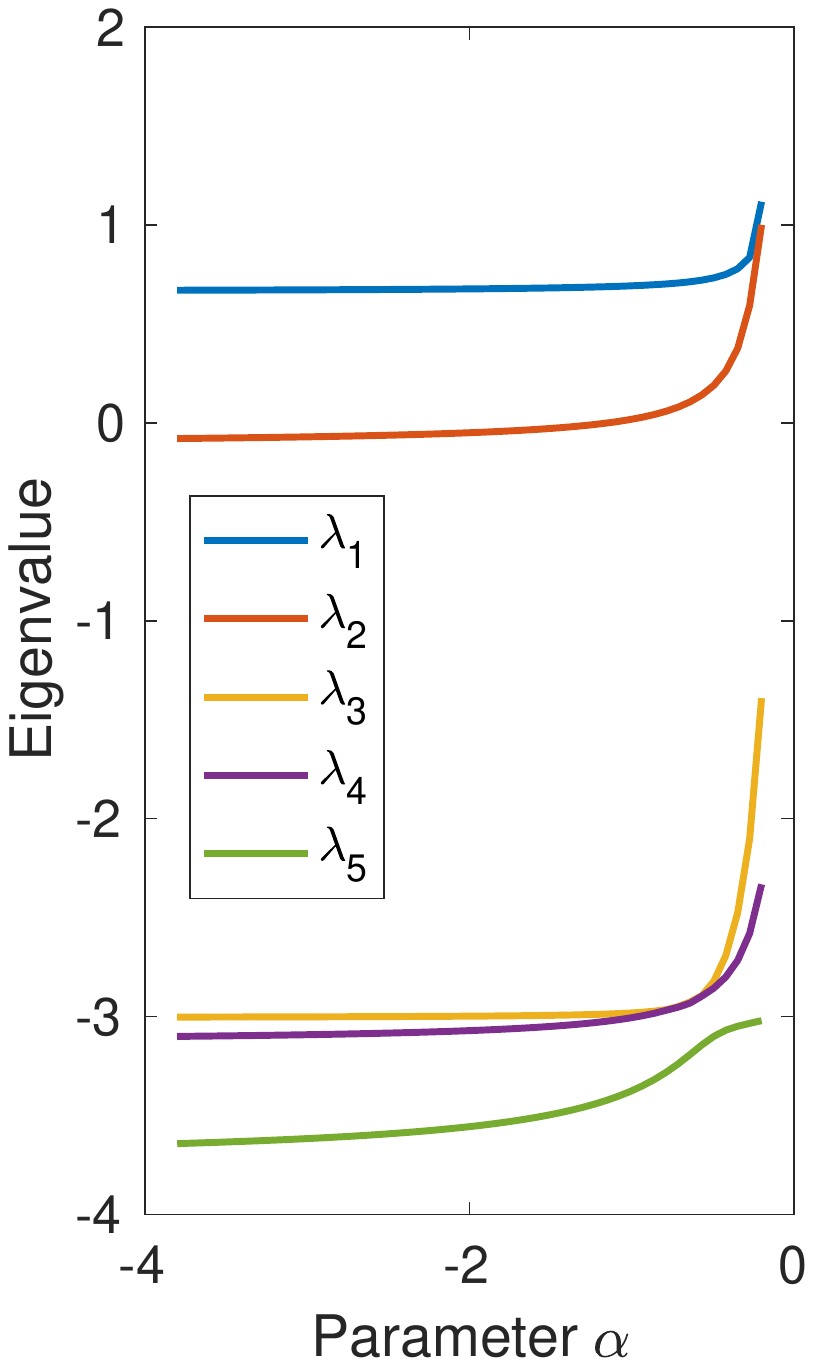}}
\resizebox{0.295\textwidth}{!}{\includegraphics{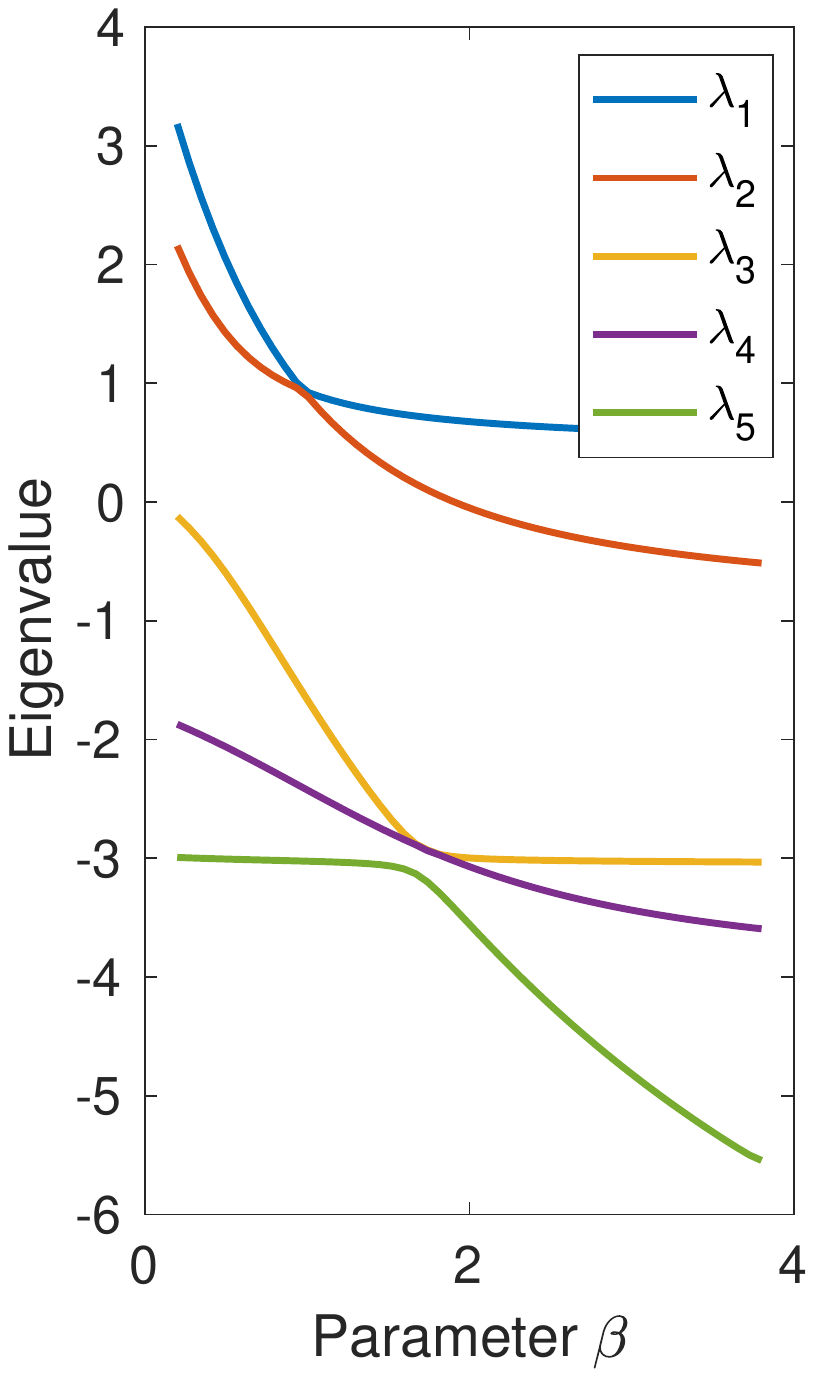}}
\resizebox{0.3 \textwidth}{!}{\includegraphics{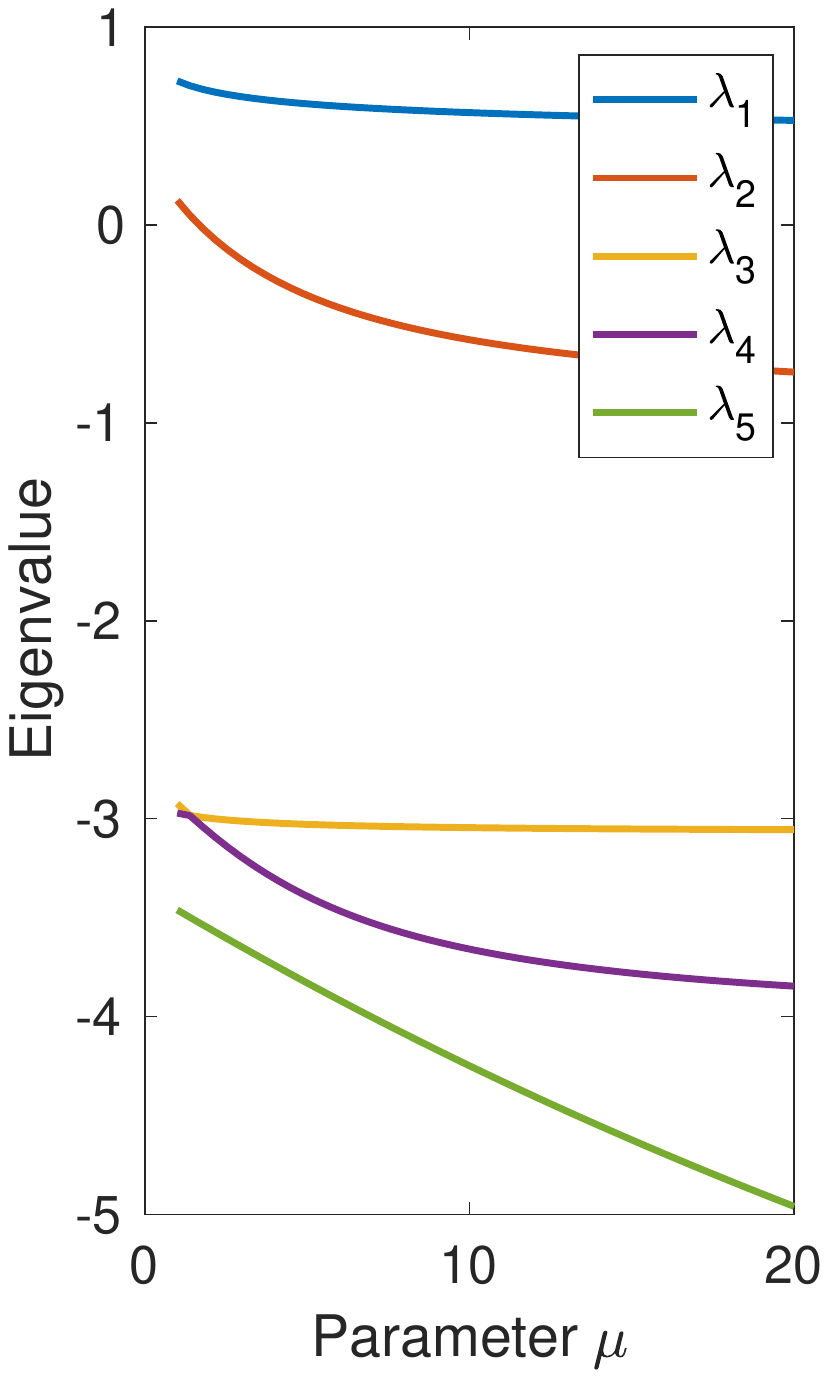}}
\caption{Changes in the first five eigenvalues (in magnitude) computed by the finite element eigenvalue solver for the half-circle scatterer and
disk $D$ as functions of the parameters $\alpha$, $\beta$ and $\mu$.  }
\label{sens_g1}
\end{figure}

The changes in the eigenvalues predicted in Fig.~\ref{sens_g1} are, of course, seen in the eigenvalues calculated via the modified far field equation. In Fig.~\ref{g1-shift} we focus on the largest pair of eigenvalues calculated by solving the modified far field equation
when $\alpha=-2$ and $\mu=2$ and $\beta=0.4,0.5,0.6$. The large change in the eigenvalues is reflected in the obvious shift in the peaks of the graphs.

\begin{figure}
\centering
\resizebox{0.6\textwidth}{!}{\includegraphics{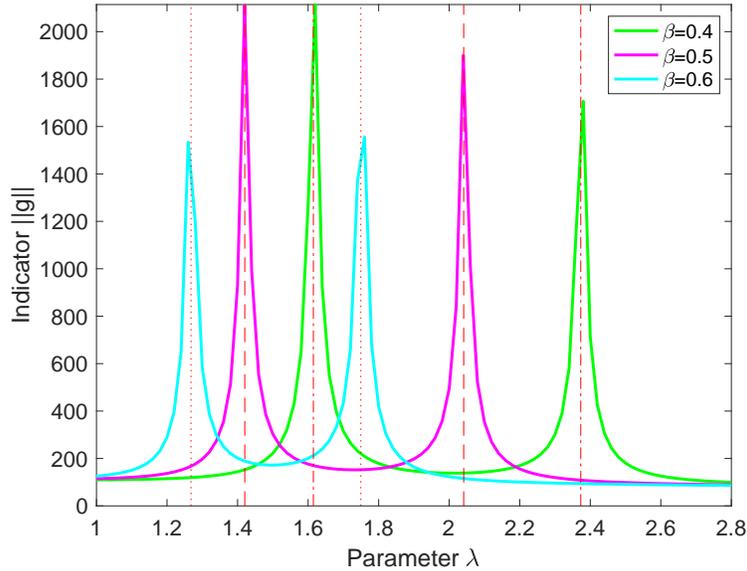}}
\caption{Predictions of the eigenvalues for the problem when $\alpha=-2$ and $\mu=2$ and $\beta=0.4,0.5$ and 0.6.  The shift in the eigenvalues
predicted in Fig.~\ref{sens_g1} (middle graph) is evident in the large translation of the peaks for the three cases.}
\label{g1-shift}
\end{figure}

\section{Conclusion}
In this paper we have examined a new set of target signatures based on eigenvalues for a thin inhomogeneity modeled by generalized transmission conditions. Concentrating on the theory, we have proved a new uniqueness result and shown that the  eigenvalues can
be determined from the solution of a modified far field operator.  Limited numerical results show that this determination can be carried out
using a discrete modified far field equation and noisy data.  More numerical testing is needed to determine how to obtain the domain $D$
that provides an accurate determination of the eigenvalues in a given case.

\section*{Acknowledgments} 
The research of F. Cakoni is partially supported  by the AFOSR Grant  FA9550-20-1-0024 and  NSF Grant DMS-2106255. The research of H. Lee is partially supported by NSF Grant DMS-2106255. The research of P. Monk is partially supported  by the AFOSR Grant  FA9550-20-1-0024.



\providecommand{\urlprefix}{URL }

\bibliographystyle{aims}
\bibliography{mybib.bib}

\medskip
\medskip

\end{document}